\documentclass[12pt]{amsart}
\usepackage{amssymb}
\usepackage{amsbsy}
\usepackage{amscd}


%

\makeatletter
%
%
%
%
%
%
%
%

\hyphenation{Gro-then-dieck}

\hfuzz1pc 
%
\renewcommand{\thesubsection}{\thesection(\@roman\c@subsection)}
\makeatother
%

%

\usepackage{verbatim}
\usepackage{version}
\usepackage{color}
\newenvironment{NB}{
\color{red}{\bf NB}. \footnotesize
}{}
\excludeversion{NB}
\newenvironment{NB2}{
\color{blue}{\bf NB}. \footnotesize
}{}
\newtheorem{Theorem}[equation]{Theorem}

\newtheorem{Lemma}[equation]{Lemma}
\newtheorem{Proposition}[equation]{Proposition}

\theoremstyle{definition}

\theoremstyle{remark}
\newtheorem{Remark}[equation]{Remark}




\numberwithin{equation}{section}

\newcommand{\thmref}[1]{Theorem~\ref{#1}}
\newcommand{\secref}[1]{\S\ref{#1}}
\newcommand{\lemref}[1]{Lemma~\ref{#1}}
\newcommand{\propref}[1]{Proposition~\ref{#1}}

\newcommand{\subsecref}[1]{\S\ref{#1}}

\newcommand{\remref}[1]{Remark~\ref{#1}}

%
\newcommand{\lsp}[2]{{\mskip-.3mu}{}^{#1}\mskip-1mu{#2}}
\newcommand{\defeq}{\overset{\operatorname{\scriptstyle def.}}{=}}
\newcommand{\C}{{\mathbb C}}
\newcommand{\Z}{{\mathbb Z}}
\newcommand{\Q}{{\mathbb Q}}




\newcommand{\SU}{\operatorname{\rm SU}}
\newcommand{\GL}{\operatorname{GL}}


\newcommand{\gl}{\operatorname{\mathfrak{gl}}}

\newcommand{\g}{{\mathfrak g}}


\newcommand{\End}{\operatorname{End}}
\newcommand{\Hom}{\operatorname{Hom}}
\newcommand{\Ext}{\operatorname{Ext}}
\newcommand{\Ker}{\operatorname{Ker}}

\newcommand{\Ima}{\operatorname{Im}}

\newcommand{\tr}{\operatorname{tr}}

\newcommand{\id}{\operatorname{id}}
\newcommand{\ve}{\varepsilon}
%
%
\newcommand{\vin}[1]{\operatorname{i}(#1)} 
\newcommand{\vout}[1]{\operatorname{o}(#1)} 
\newcommand{\bM}{{\mathbf M}} 
\newcommand{\M}{{\mathfrak M}} 
\newcommand{\Mreg}{\M^{\operatorname{reg}}}
\newcommand{\La}{{\mathfrak L}} 
\newcommand{\bv}{{\mathbf v}} 
\newcommand{\bw}{{\mathbf w}} 
\newcommand{\bC}{{\mathbf C}} 
\newcommand{\bA}{{\mathbf A}} 
\newcommand{\bI}{{\mathbf I}} 

\newcommand{\topdeg}{{\operatorname{top}}} 

 %


\newcommand{\HomE}{\operatorname{E}}
\newcommand{\HomL}{\operatorname{L}}


\newcommand{\shfO}{\mathcal O}

\renewcommand{\MR}[1]{}

\newcommand{\fT}{\mathfrak T}


\newcommand{\cC}{\mathcal C}

\setcounter{tocdepth}{1}

\usepackage[bookmarks=false]{hyperref}

\usepackage{url}

\begin{document}

\author{Hiraku Nakajima}
\title[Quiver varieties and tensor products, II]
{Quiver varieties and tensor products, II
}
\address{Research Institute for Mathematical Sciences,
Kyoto University, Kyoto 606-8502,
Japan}
\email{nakajima@kurims.kyoto-u.ac.jp}
\thanks{Supported by the Grant-in-aid
for Scientific Research (No.23340005), JSPS, Japan.
}
\begin{abstract}
  We define a family of homomorphisms on a collection of convolution
  algebras associated with quiver varieties, which gives a kind of
  coproduct on the Yangian associated with a symmetric Kac-Moody Lie
  algebra. We study its property using perverse sheaves.
\end{abstract}
\subjclass[2000]{Primary~17B37, Secondar~ 14D21, 55N33}

\maketitle
\tableofcontents

\section*{Introduction}

In the conference the author explained his joint work with Guay on a
construction of a coproduct on the Yangian $Y(\g)$ associated with an
affine Kac-Moody Lie algebra $\g$.
It is a natural generalization of the coproduct on the usual Yangian
$Y(\g)$ for a finite dimensional complex simple Lie algebra $\g$ given
by Drinfeld \cite{Drinfeld}.
Its definition is motivated also by a recent work of Maulik
and Okounkov \cite{MO} on a geometric construction of a tensor product
structure on equivariant homology groups of holomorphic symplectic
varieties, in particular of quiver varieties.
The purpose of this paper is to explain this geometric background.

For quiver varieties of finite type, the geometric coproduct
corresponding to the Drinfeld coproduct on Yangian $Y(\g)$, or more
precisely the quantum affine algebra $U_q(\widehat\g)$, was studied in
\cite{VV-std,Na-Tensor,VV2}. (And one corresponding to the coproduct
on $\g$ was studied also in \cite{Malkin}.)
But the results depend on the algebraic definition of the coproduct.
As it is not known how to define a coproduct on $Y(\g)$ for an
arbitrary Kac-Moody Lie algebra $\g$, the results cannot be
generalized to other types.

In this paper, we take a geometric approach and define a kind of a
coproduct on convolution algebras associated with quiver varieties
together with a $\C^*$-action preserving the holomorphic symplectic
form, and study its properties using perverse sheaves.
%

In fact, we have an ambiguity in the definition of the coproduct,
and we have a family of coproducts $\Delta_c$, parametrized by $c$ in
a certain affine space.
This ambiguity of the coproduct was already noticed in \cite[Remark
in \S5.2]{VV2}.
Maulik-Okounkov theory gives a canonical choice of $c$ for a quiver
variety of an arbitrary type,
and gives the formula of $\Delta_c$ on standard generators of $Y(\g)$.
Therefore we can take the formula as a definition of the coproduct and
check its compatibility with the defining relations of $Y(\g)$.
This will be done for an affine Kac-Moody Lie algebra $\g$ as we
explained in the conference.
(The formula is a consequence of results in \cite{MO}, and hence is not
explained here.)

Although there is a natural choice, the author hopes that our
framework, considering also other possibilities for $\Delta$, is
suitable for a modification to other examples of convolution algebras
when geometry does not give us such a canonical choice. (For example,
the AGT conjecture for a general group. See \cite{NaAGT}.)

Remark also that our construction is specific for $Y(\g)$, and is not
clear how to apply for a quantum loop algebra $U_q(\mathbf L\g)$. We
need to replace cohomology groups by $K$ groups to deal with the
latter, but many of our arguments work only for cohomology groups.

Finally let us comment on a difference on the coproduct for quiver
varieties of finite type and other types. 
A coproduct on an algebra $A$ usually means an algebra homomorphism
$\Delta\colon A\to A\otimes A$ satisfying the coassociativity. In our
setting the algebra $A$ depends on the dimension vector, or
equivalently dominant weight $\bw$. Hence $\Delta$ is supposed to be a
homomorphism from the algebra $A(\bw)$ for $\bw$ to the tensor product
$A(\bw^1)\otimes A(\bw^2)$ with $\bw = \bw^1+\bw^2$. For a quiver of
type $ADE$, this is true, but not in general. See \remref{rem:diff}
for the crucial point. The target of $\Delta$ is, in general, larger
than $A(\bw^1)\otimes A(\bw^2)$.
Fortunately this difference is not essential, for example, study of
tensor product structures of representations of Yangians.

\subsection*{Notations}

The definition and notation of quiver varieties related to a coproduct
are as in \cite{Na-Tensor}, except the followings:
\begin{itemize}
\item Linear maps $i$, $j$ are denoted by $a$, $b$ here.
\item A quiver possibly contains edge loops. Roots are defined as in
  \cite[\S2]{CB}. They are obtained from coordinate vectors at loop
  free vertices or $\pm$ elements in the fundamental region by
  applying some sequences of reflections at loop free vertices.
\item Varieties $\mathfrak Z$, $\widetilde{\mathfrak Z}$ are denote by
  $\fT$, $\widetilde\fT$ here.
\end{itemize}

We say a quiver is of {\it finite type}, if its underlying graph is of
type $ADE$. We way it is of {\it affine type}, if it is Jordan quiver
or its underlying graph is an extended Dynkin diagram of type $ADE$.


For $\bv = (v_i)$, $\bv' = (v'_i)\in\Z^{I}$, we say $\bv\le\bv'$ if
$v_i\le v'_i$ for any $i\in I$.

For a variety $X$, $H_*(X)$ denote its Borel-Moore homology group. It
is the dual to $H^*_c(X)$ the cohomology group with compact support.

We will use the homology group $H_*(L)$ of a closed variety $L$ in a
smooth variety $M$ in several contexts. There is often a preferred degree
in the context, which is written as `$\topdeg$' below. For example, if
$L$ is lagrangian, it is $\dim_\C M$.
If $M$ has several components $M_\alpha$ of various dimensions, we
mean $H_{\topdeg}(L)$ to be the direct sum of $H_{\topdeg}(L\cap
M_\alpha)$, though the degree `$\topdeg$' changes for each $L\cap
M_\alpha$.

Let $D(X)$ denote the bounded derived category of complexes of
constructible $\C$-sheaves on $X$.
When $X$ is smooth, $\cC_X\in D(X)$ denote the constant sheaf on $X$
shifted by $\dim X$. If $X$ is a disjoint union of smooth varieties
$X_\alpha$ with various dimensions, we understand $\cC_X$ as the
direct sum of $\cC_{X_\alpha}$.

The intersection cohomology ($IC$ for short) complex associated with a
smooth locally closed subvariety $Y\subset X$ and a local system
$\rho$ on $Y$ is denoted by $IC(Y,\rho)$ or $IC(\overline{Y},\rho)$.
If $\rho$ is the trivial rank $1$ local system, we simply denote it
by $IC(Y)$ or $IC(\overline{Y})$.

\section{Quiver varieties}

In this section we fix the notation for quiver varieties. See
\cite{Na-quiver,Na-alg} for detail.

Suppose that a finite graph is given. Let $I$ be the set of vertices
and $E$ the set of edges.
In \cite{Na-quiver,Na-alg} the author assumed that the graph does not
contain edge loops (i.e., no edges joining a vertex with itself), but
most of results (in particular definitions, natural morphisms, etc)
hold without this assumption.

Let $H$ be the set of pairs consisting of an edge together with its
orientation. So we have $\# H = 2\# E$.
For $h\in H$, we denote by $\vin{h}$ (resp.\ $\vout{h}$) the incoming
(resp.\ outgoing) vertex of $h$.  For $h\in H$ we denote by $\overline
h$ the same edge as $h$ with the reverse orientation.
Choose and fix an orientation $\Omega$ of the graph,
i.e., a subset $\Omega\subset H$ such that
$\overline\Omega\cup\Omega = H$, $\Omega\cap\overline\Omega = \emptyset$.
The pair $(I,\Omega)$ is called a {\it quiver}.
\begin{NB}
Let us define matrices $\bA_\Omega$ and $\bA_{\overline\Omega}$ by
\begin{equation}\label{eq:Aomega}
\begin{split}
  (\bA_\Omega)_{ij} 
  & \defeq \# \{ h\in \Omega \mid \vin{h} = i, \vout{h} = j\}, \\
  (\bA_{\overline\Omega})_{ij} 
  & \defeq \# \{ h\in \overline{\Omega} \mid \vin{h} = i, \vout{h} = j\}.
\end{split}
\end{equation}
So we have $\bC = 2\bI - (\bA_\Omega + \bA_{\overline\Omega})$,
$\lsp{t}\bA_\Omega = \bA_{\overline\Omega}$.  
\end{NB}

Let $V = (V_i)_{i\in I}$ be a finite dimensional $I$-graded vector
space over $\C$. The dimension of $V$ is a vector
\[
  \dim V = (\dim V_i)_{i\in I}\in \mathbb Z_{\ge 0}^I.
\]

If $V^1$ and $V^2$ are $I$-graded vector spaces, we define vector spaces by
\begin{gather*}
  \HomL(V^1, V^2) \defeq
  \bigoplus_{i\in I} \Hom(V^1_i, V^2_i), \quad
  \HomE(V^1, V^2) \defeq
  \bigoplus_{h\in H} \Hom(V^1_{\vout{h}}, V^2_{\vin{h}})
  .
\end{gather*}

For $B = (B_h) \in \HomE(V^1, V^2)$ and 
$C = (C_h) \in \HomE(V^2, V^3)$, let us define a multiplication of $B$
and $C$ by
\[
  CB \defeq \left(\sum_{\vin{h} = i} C_h B_{\overline h}\right)_i \in
  \HomL(V^1, V^3).
\]
Multiplications $ba$, $Ba$ of $a\in \HomL(V^1,V^2)$, $b\in\HomL(V^2,
V^3)$, $B\in \HomE(V^2, V^3)$ are defined in the obvious manner. If
$a\in\HomL(V^1, V^1)$, its trace $\tr(a)$ is understood as $\sum_i
\tr(a_i)$.

For two $I$-graded vector spaces $V$, $W$ with $\bv = \dim V$, $\bw =
\dim W$, we consider the vector space given by
\begin{equation*}\label{def:bM}
  \bM \equiv \bM(\bv,\bw) \defeq
  \HomE(V,V)\oplus \HomL(W,V)\oplus \HomL(V,W),
\end{equation*}
where we use the notation $\bM$ when $\bv$, $\bw$ are clear in the
context.
%
The above three components for an element of $\bM$ will be denoted by
$B = \bigoplus B_h $, $a = \bigoplus a_i$, $b = \bigoplus b_i$
respectively.


The orientation $\Omega$ defines a function $\varepsilon\colon H \to
\{ \pm 1\}$ by $\varepsilon(h) = 1$ if $h\in\Omega$, $\varepsilon(h)=
-1$ if $h\in\overline\Omega$. We consider $\varepsilon$ as an element
of $\HomL(V, V)$.
Let us define a symplectic form $\omega$ on $\bM$ by
\begin{equation*}
        \omega((B, a, b), (B', a', b'))
        \defeq \tr(\varepsilon B\, B') + \tr(a b' - a' b).
\label{def:symplectic}\end{equation*}

Let $G \equiv G_\bv$ be an algebraic group defined by
\begin{equation*}
   G \equiv G_\bv \defeq \prod_i \GL(V_i).
\end{equation*}
Its Lie algebra is the direct sum $\bigoplus_i \gl(V_i)$.
The group $G$ acts on $\bM$ by
\begin{equation*}\label{eq:Kaction}
  (B,a,b) \mapsto g\cdot (B,a,b)
  \defeq (g B g^{-1}, ga, bg^{-1})
\end{equation*}
preserving the symplectic structure.

The moment map vanishing at the origin is given by
\begin{equation*}
  \mu(B, a, b) = \varepsilon B\, B + ab \in \HomL(V, V),
\end{equation*}
where the dual of the Lie algebra of $G$ is identified with
$\HomL(V, V)$ via the trace.


We would like to consider a `symplectic quotient' of $\mu^{-1}(0)$
divided by $G$. 
However we cannot expect the set-theoretical quotient
to have a good property. Therefore we consider the quotient using the
geometric invariant theory. Then the quotient depends on an additional
parameter $\zeta = (\zeta_i)_{i\in I}\in \Z^I$ as follows:
Let us define a character of $G$ by
\begin{equation*}
  \chi_\zeta(g) \defeq \prod_{i\in I} \left(\det g_i\right)^{-\zeta_i}.
\end{equation*}
Let $A(\mu^{-1}(0))$ be the coodinate ring of the affine variety
$\mu^{-1}(0)$. Set
\begin{equation*}
   A(\mu^{-1}(0))^{G,\chi_\zeta^n}
   \defeq \{ f\in A(\mu^{-1}(0)) \mid f(g\cdot(B,a,b))
   = \chi_\zeta(g)^n f((B,a,b)) \}.
\end{equation*}
The direct sum with respect to $n\in\Z_{\ge 0}$ is a graded algebra,
hence we can define 
\begin{equation*}
   \M_\zeta \equiv \M_\zeta(\bv,\bw) \equiv \M_\zeta(V,W)
   \defeq \operatorname{Proj}(\bigoplus_{n\ge 0}
   A(\mu^{-1}(0))^{G,\chi_\zeta^n}).
\end{equation*}
This is the {\it quiver variety\/} introduced in \cite{Na-quiver}.
Since this space is unchanged when we replace $\chi$ by a positive power
$\chi^N$ ($N > 0$), this space is well-defined for $\zeta\in \Q^I$.
We call $\zeta$ a {\it stability parameter}.

We use two special stability parameters in this paper. When $\zeta =
0$, the corresponding $\M_0$ is an affine algebraic variety whose
coordinate ring consists of the $G$-invariant functions on
$\mu^{-1}(0)$.

Another choice is $\zeta_i = 1$ for all $i$. In this case, we denote
the corresponding variety simply by $\M$. The corresponding stability
condition is that an $I$-graded subspace $V'$ of $V$ invariant under
$B$ and contained in $\Ker b$ is $0$ \cite[Lemma~3.8]{Na-alg}. The
stability and semistability are equivalent in this case, and the
action of $G$ on the set $\mu^{-1}(0)^{\mathrm{s}}$ of stable points
is free, and $\M$ is the quotient $\mu^{-1}(0)^{\mathrm{s}}/G$. In
particular $\M$ is nonsingular.


\section{Tensor product varieties}\label{sec:}

Let $W^2\subset W$ be an $I$-graded subspace and $W^1 = W/W^2$ be the
quotient. We fix an isomorphism $W \cong W^1\oplus W^2$. We define a one
parameter subgroup $\lambda\colon \C^*\to G_W$ by
\(
  \lambda(t) = \id_{W^1} \oplus\; t \id_{W^2}.
\)
Then $\C^*$ acts on $\M$, $\M_0$ through $\lambda$.

We fix $\bv$, $\bw$ and $\bw^1 = \dim W^1$, $\bw^2 = \dim W^2$
throughout this paper.
Since we use several quiver varieties with different dimension
vectors, let us use the notation $\M(\bv^1,\bw^1)$, etc for those,
while the notation $\M$ means the original $\M(\bv,\bw)$.

\subsection{Fixed points}\label{subsec:fixed}

We consider the fixed point loci $\M^{\C^*}$,
$\M_0^{\C^*}$. The former decomposes as
\begin{equation}\label{eq:decomp1}
  \M^{\C^*} = \bigsqcup_{\bv=\bv^1+\bv^2} \M(\bv^1,\bw^1)\times \M(\bv^2,\bw^2)
\end{equation}
(see \cite[Lemma~3.2]{Na-Tensor}). The isomorphism is given by
considering the direct sum of $[B^1,a^1,b^1]\in\M(\bv^1,\bw^1)$
and $[B^2,a^2,b^2]\in \M(\bv^2,\bw^2)$ as a point in $\M$.
Since quiver varieties $\M(\bv^1,\bw^1)$, $\M(\bv^2,\bw^2)$ are
connected, this is a decomposition of $\M^{\C^*}$ into connected
components.

Let us study the second fixed point locus $\M_0^{\C^*}$. We have a
morphism
\begin{equation*}
  \sigma\colon \bigsqcup_{\bv = \bv^1+\bv^2}
  \M_0(\bv^1,\bw^1)\times \M_0(\bv^2,\bw^2) \to \M_0^{\C^*}
\end{equation*}
given by the direct sum as above. This cannot be an isomorphism unless
$\bv=0$ as the inverse image of $0$ consists of several points
corresponding to various decomposition $\bv = \bv^1+\bv^2$. This is
compensated by considering the direct limit $\M_0(\bw) = \bigcup_{\bv}
\M_0(\bv,\bw)$ if the underlying graph is of type $ADE$. But this
trick does not solve the problem yet in general.
For example, if the quiver is the Jordan quiver, and $\bv^1 = \bw^1 =
\bv^2 = \bw^2 = 1$, we have $\M_0(\bv^1,\bw^1) = \M_0(\bv^2,\bw^2) =
\C^2$, while $\M_0^{\C^*} = S^2(\C^2)$. The morphism $\sigma$ is the
quotient map $\C^2\times \C^2 \to S^2(\C^2) = (\C^2\times\C^2)/S_2$.
Let us study $\sigma$ further.

Using the stratification \cite[Lemma~3.27]{Na-alg} we decompose
$\M_0 = \M_0(\bv,\bw)$ as
\begin{equation}\label{eq:decomp2}
  \M_0 = \bigsqcup_{\bv^0} \Mreg_0(\bv^0,\bw)\times \M_0(\bv-\bv^0,0),
\end{equation}
where $\Mreg_0(\bv^0,\bw)$ is the open subvariety of $\M_0(\bv^0,\bw)$
consisting of closed free orbits, and $\M_0(\bv-\bv^0,0)$ is the
quiver variety associated with $W=0$.
For quiver varieties of type $ADE$, the factor $\M_0(\bv-\bv^0,0)$ is
a single point $0$.
It is nontrivial in general.
For example, if the quiver is the Jordan quiver, we have
$\M_0(\bv-\bv^0,0) = S^n(\C^2)$ where $n = \bv-\bv^0$. Then
\begin{Lemma}\label{lem:strat}
  \textup{(1)} The above stratification induces a stratification
  \begin{equation*}
    \M_0^{\C^*}
    = \bigsqcup_{\substack{\bv^0, \lsp{1}\bv,\lsp{2}\bv \\
        \bv^0 = \lsp{1}\bv+\lsp{2}\bv}}
    \Mreg_0(\lsp{1}\bv,\bw^1)\times \Mreg_0(\lsp{2}\bv,\bw^2)
    \times \M_0(\bv-\bv^0,0).
  \end{equation*}

  \textup{(2)} $\sigma$ is a surjective finite morphism.
\end{Lemma}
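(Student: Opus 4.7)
The approach is to intersect the stratification \eqref{eq:decomp2} with the $\C^*$-fixed locus stratum by stratum. The factor $\M_0(\bv-\bv^0,0)$ carries no $W$, so $\lambda$ acts trivially on it and its fixed locus is the whole thing. The substantive part of (1) is therefore to identify $\Mreg_0(\bv^0,\bw)^{\C^*}$ with $\bigsqcup_{\lsp{1}\bv+\lsp{2}\bv=\bv^0}\Mreg_0(\lsp{1}\bv,\bw^1)\times\Mreg_0(\lsp{2}\bv,\bw^2)$.

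For this identification I would pick a closed free $G_{\bv^0}$-orbit $(B,a,b)$ representing a $\C^*$-fixed point and invoke freeness to obtain a unique one-parameter subgroup $\rho\colon\C^*\to G_{\bv^0}$ satisfying $\rho(t)\cdot(B,a,b)=(B,\lambda(t)a,b\lambda(t)^{-1})$, paralleling \cite[Lemma~3.2]{Na-Tensor}. Writing $V=\bigoplus_k V^{[k]}$ for the $\rho$-weight decomposition, this identity forces $B$ to be weight-preserving, $a(W^1)\subset V^{[0]}$ and $a(W^2)\subset V^{[1]}$, while $b$ sends $V^{[0]}\to W^1$, $V^{[1]}\to W^2$ and kills $V^{\mathrm{other}}\defeq\bigoplus_{k\neq 0,1}V^{[k]}$. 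Then scaling by $s\in\C^*$ on $V^{\mathrm{other}}$ (identity elsewhere) lies in the stabilizer of $(B,a,b)$ for every $s$, so freeness forces $V^{\mathrm{other}}=0$; the triple becomes a literal direct sum in $\bM(\lsp{1}\bv,\bw^1)\oplus\bM(\lsp{2}\bv,\bw^2)$ with $\lsp{1}\bv=\dim V^{[0]}$, $\lsp{2}\bv=\dim V^{[1]}$.

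I expect the main obstacle to be verifying that each summand still represents a closed free orbit for its own smaller group $G_{\lsp{i}\bv}$, and conversely that any such direct sum is closed and free for $G_{\bv^0}$. The worry is that $G_{\bv^0}$ is strictly larger than $G_{\lsp{1}\bv}\times G_{\lsp{2}\bv}$ and could create extra degenerations or stabilizers via block-off-diagonal elements. I would dispatch this by appealing to the standard polystability picture for the preprojective algebra (closed $G$-orbit in $\mu^{-1}(0)$ $\Leftrightarrow$ semisimple representation), which is additive under direct sums, together with the observation that $G_{\lsp{i}\bv}$ embeds block-diagonally into $G_{\bv^0}$ so that freeness transfers both ways.

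For (2), $\sigma$ is the polynomial direct-sum map descended to affine quotients and is manifestly a morphism; surjectivity is then immediate from (1), since the $\M_0(\bv-\bv^0,0)$-factor of any $\C^*$-fixed point can be written as a direct sum of simples that one freely distributes between the two sides. Finiteness of each fibre reduces to the same observation: a polystable representation of the unframed quiver has only finitely many simple constituents with multiplicity, giving only finitely many ways to partition them between $\M_0(\lsp{1}\bv,\bw^1)$ and $\M_0(\lsp{2}\bv,\bw^2)$ over a prescribed image point.
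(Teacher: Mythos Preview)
Your treatment of (1) is correct. The paper takes a slightly shorter route: rather than rerunning the weight-space argument on closed free orbits, it notes that $\Mreg_0(\bv^0,\bw)$ is an open subvariety of the smooth variety $\M(\bv^0,\bw)$ (closed free orbits being in particular stable) and simply restricts the decomposition \eqref{eq:decomp1}, checking that a pair $(x,y)$ lands in $\Mreg_0$ iff each factor does. Your direct argument is essentially the proof of \cite[Lemma~3.2]{Na-Tensor} specialised to the free locus, so the two approaches amount to the same thing.

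For (2) there is a genuine gap. What you establish is that each fibre of $\sigma$ is a finite \emph{set}, but a finite morphism is a stronger notion: the coordinate ring of the source must be module-finite over that of the target. Quasi-finiteness between affine varieties does not imply this (the open immersion $\Spec\C[t,t^{-1}]\hookrightarrow\Spec\C[t]$ has one-point fibres but is not finite), and since source and target are both affine, properness is equivalent to finiteness, so there is no independent properness argument to fall back on.

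The paper proves finiteness at the level of coordinate rings. The invariant ring of $\M_0^{\C^*}$ is generated by cycle traces $\tr(B_{h_N}\cdots B_{h_1})$ together with framed path functions $\chi(b\,B_{h_N}\cdots B_{h_1}\,a)$ where $\chi$ is block-diagonal for $W=W^1\oplus W^2$. Pulled back along $\sigma$, each such generator becomes the \emph{sum} of the corresponding generators on the two factors $\M_0(\bv^1,\bw^1)$ and $\M_0(\bv^2,\bw^2)$; this is exactly the mechanism by which the ring of $S_k\times S_{n-k}$-symmetric polynomials is finite over the $S_n$-symmetric ones, and it yields module-finiteness of $\sigma^*$ directly. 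Your fibre-counting would need to be upgraded along these lines.
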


Thus the factor with $W=0$ appears twice in $\M_0(\bv^1,\bw^1)\times
\M_0(\bv^2,\bw^2)$ while it appears only once in $\M_0^{\C^*}$.

\begin{proof}
  (1) We consider $\Mreg_0(\bv^0,\bw)$ as an open subvariety in
  $\M(\bv^0,\bw)$ and restrict the decomposition
  \eqref{eq:decomp1}. Then it is easy to check that $(x,y)\in
  \M(\lsp{1}\bv,\bw^1)\times\M(\lsp{2}\bv,\bw^2)$ is contained in
  $\Mreg_0(\bv^0,\bw)$ if and only if $x$, $y$ are in
  $\Mreg_0(\lsp{1}\bv,\bw^1)$, $\Mreg_0(\lsp{2}\bv,\bw^2)$
  respectively. Thus $\Mreg_0(\bv^0,\bw)^{\C^*} =
  \Mreg_0(\lsp{1}\bv,\bw^1)\times \Mreg_0(\lsp{2}\bv,\bw^2)$.
  Now the assertion is clear as $\C^*$ acts trivially on the factor
  $\M_0(\bv-\bv^0,0)$.

(2) The coordinate ring of $\M_0$ is generated by the following two
types of functions:
\begin{itemize}
\item $\tr(B_{h_N}B_{h_{N-1}}\cdots B_{h_1}\colon
V_{\vout{h_1}} \to V_{\vin{h_N}} = V_{\vout{h_1}})$,
where $h_1$, \dots, $h_N$ is a cycle in our graph.
\item $\chi(b_{\vin{h_N}} B_{h_N} B_{h_{N-1}} \cdots B_{h_1}
a_{\vout{h_1}})$,
where $h_1$, \dots, $h_N$ is a path in our graph, and $\chi$ is a
linear form on $\Hom(W_{\vout{h_1}}, W_{\vin{h_N}})$.
\end{itemize}
Then the generators for $\M_0^{\C^*}$ are the first type functions and
second type functions with $\chi =
(\chi_1,\chi_2)\in\Hom(W_{\vout{h_1}}^1, W^1_{\vin{h_N}})
\oplus\Hom(W_{\vout{h_1}}^2, W^2_{\vin{h_N}})$.

If we pull back these functions by $\sigma$, they become sums of the
same types of functions for $\M_0(\bv^1,\bw^1)$ and
$\M_0(\bv^2,\bw^2)$. From this observation, we can easily see that
$\sigma$ is a finite morphism.
From (1) it is clearly surjective.
\end{proof}

\begin{Remark}\label{rem:diff}
  Let $Z(\bv^a,\bw^a)$ be the fiber product
  $\M(\bv^a,\bw^a)\times_{\M_0(\bv^a,\bw^a)} \M(\bv^a,\bw^a)$ for
  $a=1,2$.  The fiber product $\M^{\C^*}\times_{\M_0^{\C^*}}\M^{\C^*}$
  is larger than
the union of the products $Z(\bv^1,\bw^1)\times Z(\bv^2,\bw^2)$
in general.
For example, consider the Jordan quiver variety with $\bv^1 = \bv^2 =
\bw^1 = \bw^2 = 1$. Then $\M(\bv^a,\bw^a)$ is $\C^2$. The product
$Z(\bv^1,\bw^1)\times Z(\bv^2,\bw^2)$ is consisting of points $(p_1,
q_1, p_2, q_2)$ with $p_1 = q_1$, $p_2 = q_2$. On the other hand,
$\M^{\C^*}\times_{\M_0^{\C^*}}\M^{\C^*}$ contains also points with
$p_1 = q_2$, $p_2 = q_1$.

On the other hand, if the quiver is of type $ADE$, we do not have
the factor $\M_0(\bv-\bv^0,0)$, and they are the same.
\end{Remark}

\subsection{Review of \cite{Na-Tensor}}\label{subsec:variety}

In this subsection we recall results in \cite[\S3]{Na-Tensor}, with
emphasis on subvarieties in the affine quotient $\M_0$.

We first define the following varieties which were implicitly
introduced in \cite[\S3]{Na-Tensor}:
\begin{equation*}
  \begin{split}
    \fT_0 &\defeq \{ x \in \M_0 \mid
    \text{$\lim_{t\to 0} \lambda(t) x$ exists}\},
    \\
    \widetilde\fT_0 &\defeq \{ x \in \M_0 \mid
    \lim_{t\to 0} \lambda(t) x = 0\}.
  \end{split}
\end{equation*}
By the proof of \cite[Lemma~3.6]{Na-Tensor} we have the following: $x
= [B,a,b]$ is in $\fT_0$ (resp.\ $\widetilde\fT_0$) if and only if
\begin{itemize}
\item $b_{\vin{h_N}} B_{h_N}B_{h_{N-1}}\cdots B_{h_1}a_{\vout{h_1}}$
  maps $W^2_{\vout{h_1}}$ to $W^2_{\vin{h_N}}$ (resp.\
  $W^2_{\vout{h_1}}$ to $0$ and the whole $W_{\vout{h_1}}$ to $W^2_{\vin{h_N}}$)
  for any path in the doubled quiver.
\end{itemize}
From this description it also follows that $\fT_0$,
$\widetilde\fT_0$ are closed subvarieties in $\M_0$.

We have the inclusion
\(
  i\colon \fT_0\to \M_0
\)
and the projection
\(
  p\colon \fT_0\to \M_0^{\C^*}
\)
defined by taking $\lim_{t\to 0}\lambda(t)x$.
The latter is defined as $\M_0$ is affine.

We define $\fT \defeq \pi^{-1}(\fT_0)$, $\widetilde\fT \defeq
\pi^{-1}(\widetilde\fT_0)$. These definitions coincide with ones in
\cite[\S3]{Na-Tensor}.
Note that we do not have an analog of $p\colon\fT_0\to
\M_0^{\C^*}$ for $\fT$.
Instead we have a decomposition
\begin{equation}\label{eq:decomp}
  \fT 
  = \bigsqcup_{\bv=\bv^1+\bv^2} \fT(\bv^1,\bw^1;\bv^2,\bw^2)
\end{equation}
into locally closed subvarieties,
and the projection 
\begin{equation}\label{eq:vector}
  p_{(\bv^1,\bv^2)}\colon \fT(\bv^1,\bw^1;\bv^2,\bw^2)\to
  \M(\bv^1,\bw^1)\times\M(\bv^2,\bw^2),
\end{equation}
which is a vector bundle. These are defined by considering the limit
$\lim_{t\to 0}\lambda(t)x$. Note that they intersect in their
closures, contrary to \eqref{eq:decomp1}, which was the decomposition
into connected components.
Since pieces in \eqref{eq:decomp} are mapped to different components,
$p_{(\bv^1,\bv^2)}$'s do not give a morphism defined on $\fT$.

As a vector bundle, $\fT(\bv^1,\bw^1;\bv^2,\bw^2)$ is the subbundle of
the normal bundle of $\M(\bv^1,\bw^1)\times\M(\bv^2,\bw^2)$ in
$\M$ consisting of positive weight spaces.
Its rank is half of the codimension of
$\M(\bv^1,\bw^1)\times\M(\bv^2,\bw^2)$.
In fact, the restriction of the tangent space of $\M$ to
$\M(\bv^1,\bw^1)\times\M(\bv^2,\bw^2)$ decomposes into weight $\pm 1$
and $0$ spaces such that
\begin{itemize}
\item the weight $0$ subspace gives the tangent bundle
of $\M(\bv^1,\bw^1)\times\M(\bv^2,\bw^2)$,
\item the weight $1$ and $-1$ subspaces are dual to each other with
  respect to the symplectic form on $\M$.
\end{itemize}

We define a partial order $<$ on the set $\{ (\bv^1,\bv^2) \mid
\bv^1+\bv^2=\bv\}$ defined by $(\bv^1,\bv^2) \le (\bv^{\prime
  1},\bv^{\prime 2})$ if and only if $\bv^1\le \bv^{\prime 1}$.
We extend it to a total order and denote it also by $<$. Let
\begin{equation*}
  \fT_{\le (\bv^1,\bv^2)} \defeq \bigcup_{(\bv^{\prime 1},\bv^{\prime 2})\le
  (\bv^1,\bv^2)} \fT(\bv^1,\bw^1;\bv^2,\bw^2),
\end{equation*}
and let $\fT_{<(\bv^1,\bv^2)}$ be the union obtained similarly by
replacing $\le$ by $<$.
Then $\fT_{\le(\bv^1,\bv^2)}$, $\fT_{<(\bv^1,\bv^2)}$ are closed
subvarieties in $\fT$.

\subsection{The fiber product $Z_{\fT}$}

We introduce one more variety, following \cite{MO}. Let us consider
$\fT_0$ as a subvariety in $\M_0\times \M_0^{\C^*}$, where the
projection to the second factor is given by $p$. Let
$Z_{\fT}\subset\M\times \M^{\C^*}$ be the inverse image of $\fT_0$
under the restriction of the projective morphism $\pi\times\pi$. This
variety is an analog of the variety $Z = \M\times_{\M_0}\M$ introduced
in \cite[\S7]{Na-alg}.
Note that $Z_{\fT}$ is also given as a fiber product
$\fT\times_{\M_0^{\C^*}}\M^{\C^*}$, where $\fT\to \M_0^{\C^*}$ is given
by the composition of $\pi\colon \fT\to \fT_0$ and
$p\colon\fT_0\to\M_0^{\C^*}$.
We will consider a cycle in $Z_{\fT}$ as a correspondence in
$\M\times\M^{\C^*}$ later.
Note that the projection $p_1\colon Z_{\fT}\to \M$ is proper, but
$p_2\colon Z_{\fT}\to \M^{\C^*}$ is not.

We consider the two decompositions (\ref{eq:decomp1}, \ref{eq:decomp}).
For brevity, we change the notation as
\begin{equation*}
  \M^{\C^*} = \bigsqcup_\alpha \M_\alpha, \quad
  \fT = \bigsqcup_\alpha \fT_\alpha.
\end{equation*}
We also recall
\begin{equation*}
  \fT_{\le\alpha} = \bigsqcup_{\beta\le \alpha} \fT_\beta, \qquad
  \fT_{<\alpha} = \bigsqcup_{\beta< \alpha} \fT_\beta
\end{equation*}
These are closed subvarieties in $\fT$.

Then they induce a decomposition
\begin{equation*}
  Z_{\fT} = \bigsqcup_{\alpha,\beta}
  Z_{\fT,\alpha,\beta},
\end{equation*}
with
\begin{equation*}
  Z_{\fT,\alpha,\beta}
  \defeq Z_{\fT}\cap \left(\fT_\alpha\times \M_\beta\right).
\end{equation*}
We have the corresponding decomposition
\begin{equation*}
  Z^{\C^*} = \M^{\C^*}\times_{\M_0^{\C^*}}\M^{\C^*}
  = \bigsqcup_{\alpha,\beta} Z_{\alpha,\beta}
\end{equation*}
induced from the the decomposition of the first and second factors.

We also define
\begin{equation*}
  Z_{\fT,\le\alpha,\beta} \defeq Z_{\fT}\cap
  \left(\fT_{\le\alpha}\times \M_\beta\right), \qquad
  Z_{\fT,<\alpha,\beta} \defeq Z_{\fT}\cap
  \left(\fT_{<\alpha}\times \M_\beta\right).
\end{equation*}
They are closed subvarieties in $Z_{\fT}$ and $Z_{\fT,\alpha,\beta}$ is
an open subvariety in $Z_{\fT,\le\alpha,\beta}$.
On the other hand, each $Z_{\alpha,\beta}$ is a closed subvariety
in $\M_\alpha\times\M_\beta$.

Each piece $Z_{\fT,\alpha,\beta}$ is a vector bundle over
\(
  Z_{\alpha,\beta},
\)
which is pull-back of $\fT_\alpha\to \M_\alpha$. Therefore its rank is
half of the codimension of $\M_\alpha$ in $\M$.

\begin{Proposition}\label{prop:irr}
  \textup{(1)} Each irreducible component of $Z_{\fT,\alpha,\beta}$ is
  at most half dimensional in $\M\times\M^{\C^*}$, and hence the same
  is true for $Z_{\fT}$.

  \textup{(2)} Irreducible components of $Z_{\fT}$ of half dimension
  are lagrangian subvarieties in $\M\times\M^{\C^*}$.
\end{Proposition}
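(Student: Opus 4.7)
My plan is to prove (1) by stratifying $Z_{\fT}$ and exploiting the vector-bundle structure of each stratum, then to deduce (2) from (1) by a $\C^*$-weight computation of the symplectic form at fixed points.

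For (1): since $Z_{\fT}=\bigsqcup_{\alpha,\beta}Z_{\fT,\alpha,\beta}$, it suffices to control each piece. The vector bundle $Z_{\fT,\alpha,\beta}\to Z_{\alpha,\beta}$ has rank $\tfrac12(\dim\M-\dim\M_\alpha)$, so
$$\dim Z_{\fT,\alpha,\beta}=\dim Z_{\alpha,\beta}+\tfrac12(\dim\M-\dim\M_\alpha).$$
Matching this against the target $\tfrac12\dim(\M\times\M_\beta)=\tfrac12(\dim\M+\dim\M_\beta)$ reduces the claim to the key bound
$$\dim Z_{\alpha,\beta}\le\tfrac12(\dim\M_\alpha+\dim\M_\beta). \qquad(\ast)$$
For $(\ast)$ I invoke \lemref{lem:strat}(2): since $\sigma$ is finite surjective, pulling $Z_{\alpha,\beta}=\M_\alpha\times_{\M_0^{\C^*}}\M_\beta$ back along $\sigma\times\sigma$ produces a finite cover that decomposes, via the stratification in \lemref{lem:strat}(1), into a finite union of pieces whose projections are ordinary Steinberg-type fiber products for the two summand quiver varieties with framings $\bw^1$ and $\bw^2$, together with an identification of the $W=0$ vacuum factor $\M_0(\bv-\bv^0,0)$ between the two sides (the ambiguity recorded in \remref{rem:diff}). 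Each such Steinberg-type fiber product is lagrangian in its ambient product by the classical result \cite[Lemma~7.2]{Na-alg}, so each piece is at most half-dimensional; finite maps preserve dimensions, so $(\ast)$ and hence (1) follow.

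For (2), let $C\subset Z_{\fT}$ be a top-dimensional irreducible component, lying in the closure of some $Z_{\fT,\alpha,\beta}$. The $\C^*$-action on $\M\times\M^{\C^*}$ is trivial on the second factor, so $C$ is $\C^*$-stable and contains $\C^*$-fixed points, and the symplectic form $\Omega=p_1^*\omega_\M-p_2^*\omega_{\M^{\C^*}}$ is $\C^*$-invariant. Hence it suffices to check that $\Omega$ vanishes on $T_{(x,y)}C$ at a smooth $\C^*$-fixed point $(x,y)\in C\cap(\M_\alpha\times\M_\beta)\subset Z_{\alpha,\beta}$; vanishing at other smooth points follows by $\C^*$-invariance and the fact that the attracting flow $t\to 0$ contracts $Z_{\fT,\alpha,\beta}$ onto $Z_{\alpha,\beta}$ along its vector-bundle fibers. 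At a fixed $(x,y)$ the weight decomposition $T_x\M=T_x\M_\alpha\oplus N^+_x\oplus N^-_x$ gives $T_{(x,y)}Z_{\fT,\alpha,\beta}=T_{(x,y)}Z_{\alpha,\beta}\oplus N^+_x$. Since $\omega_\M$ has $\C^*$-weight zero, $\omega_\M(T_x\M_\alpha,N^+_x)=0$ and $\omega_\M(N^+_x,N^+_x)=0$, while $\omega_\M|_{T_x\M_\alpha}$ is the symplectic form of $\M_\alpha$. Thus $\Omega$ on $T_{(x,y)}Z_{\fT,\alpha,\beta}$ reduces to $(\omega_{\M_\alpha}-\omega_{\M_\beta})|_{T_{(x,y)}Z_{\alpha,\beta}}$, which vanishes because $Z_{\alpha,\beta}$ is lagrangian in $\M_\alpha\times\M_\beta$ (its top-dimensional pieces come from the same Steinberg lagrangians used in $(\ast)$).

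The main obstacle will be $(\ast)$: $Z^{\C^*}=\M^{\C^*}\times_{\M_0^{\C^*}}\M^{\C^*}$ is strictly larger than the naive union of Steinberg products because $\sigma$ identifies different splittings of the $W=0$ vacuum factor between the two sides (see \remref{rem:diff}). The finiteness in \lemref{lem:strat}(2) is exactly what saves the count: it packages this enlargement as a finite cover, so no dimensions are lost and the proof reduces to the classical Steinberg lagrangian property for each summand quiver variety.
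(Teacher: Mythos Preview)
Your argument for (1) is correct and matches the paper's: both reduce, via the vector-bundle structure, to the bound $\dim Z_{\alpha,\beta}\le\tfrac12(\dim\M_\alpha+\dim\M_\beta)$, and both obtain this from semismallness of $\pi$ for the summand quiver varieties together with finiteness of $\sigma$ (\lemref{lem:strat}(2)). The paper states this more tersely (``as $\sigma$ is finite, the same is true for $Z^{\C^*}$''), but the content is the same.

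For (2) your route is genuinely different from the paper's. The paper invokes the \'etale-local slice description of $\pi$ from \cite[Th.~3.3.2]{Na-qaff}, which respects the symplectic form, to reduce to the isotropy of central fibers \cite[Th.~5.8]{Na-quiver}. You instead use a $\C^*$-weight argument: at a fixed point $(x,y)$ the tangent space $T_{(x,y)}Z_{\fT,\alpha,\beta}$ has only weights $0$ and $+1$, so $\Omega$ (which pairs weight $a$ with weight $-a$) restricts to $\Omega|_{T_{(x,y)}Z_{\alpha,\beta}}$; the attracting-flow argument then propagates vanishing from the zero section to the whole bundle. This reduction is correct and quite clean.

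The gap is in your last step, where you assert that the half-dimensional components of $Z_{\alpha,\beta}$ are Lagrangian because they ``come from the same Steinberg lagrangians used in $(\ast)$''. The argument in $(\ast)$ uses finiteness of $\sigma$ to transfer a \emph{dimension} bound; it does not transfer isotropy. As \remref{rem:diff} shows, $Z_{\alpha,\beta}$ genuinely contains components (the ``swap'' type, coming from redistribution of the $W=0$ factor) that are not products of Steinberg varieties for $\bw^1$ and $\bw^2$, so you cannot simply cite the Steinberg Lagrangian property factorwise. The fix is short: observe that $Z_{\alpha,\beta}\subset Z^{\C^*}\subset Z$, and $Z$ is isotropic in $\M\times\M$ (this is the classical fact, proved via the local model and isotropy of $\pi^{-1}(0)$); since any subvariety of an isotropic subvariety is isotropic (pull $\omega$ back to a resolution $\tilde Z\to Z$, where it vanishes identically, then push down along the proper surjection $\pi^{-1}(W)\to W$ using generic smoothness), $Z_{\alpha,\beta}$ is isotropic in $\M_\alpha\times\M_\beta$. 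With this correction your argument for (2) goes through, and arguably isolates the symplectic-geometric mechanism more transparently than the slice argument.
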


Here $\M^{\C^*}$ has several connected components of various
dimensions, so the above more precisely meant half dimensional in each
component $\M\times\M_\beta$.

\begin{proof}
  (1) It is known that $\pi\colon \M\to\M_0$ is semismall, if we
  replace the target by the image $\pi(\M_0)$. (This is a consequence
  of \cite[6.11]{Na-quiver} as explained in
  \cite[2.23]{Na-branching}.)
  Therefore irreducible components of $Z = \M\times_{\M_0}\M$ are at
  most half dimensional in $\M\times\M$. As $\sigma$ is a finite
  morphism, the same is true for $Z^{\C^*}$.
  Now the assertion for $Z_{\fT,\alpha,\beta}$ follows as it is a
  vector bundle over $Z_{\alpha,\beta}$ whose rank is equal to the
  half of codimension of $\M_\alpha$.

  (2) This follows from the local description of $\pi$ in
  \cite[Theorem 3.3.2]{Na-qaff} which respects the symplectic form
  from its proof, together with the fact that $\pi^{-1}(0)$ is
  isotropic by the proof of \cite[Theorem 5.8]{Na-quiver}.
\end{proof}

\section{Coproduct}

In this section we define a kind of a coproduct on the convolution
algebra $H_*(Z)$. The target of $\Delta$ is, in general, larger than
the tensor product $H_*(Z(\bw^1))\otimes H_*(Z(\bw^2))$ as we
mentioned in the introduction.

\subsection{Convolution algebras}

Recall the fiber product $Z = \M\times_{\M_0}\M$. The convolution
product defines an algebra structure on $H_{*}(Z)$:
\begin{equation*}
  a\ast b \defeq p_{13*}(p_{12}^*(a)\cap p_{23}^*(b)), \quad
  a,b\in H_*(Z),
\end{equation*}
where $p_{ij}$ is the projection from $\M\times\M\times\M$ to the
product of the $i^{\mathrm{th}}$ and $j^{\mathrm{th}}$-factors, and
$Z$ is viewed as a subvariety in $\M\times\M$ for the cap
product. (See \cite[\S2.7]{CG} for more detail.)

As $\pi\colon\M\to\pi(\M)$ is a semismall morphism, the top degree
component $H_{\topdeg}(Z)$ is a subalgebra, where `$\topdeg$' is equal
to the complex dimension of $\M\times\M$. Moreover $H_*(Z)$ is a
graded algebra, where the degree $p$ elements are in $H_{\topdeg-p}(Z)$.
(See \cite[\S8.9]{CG}.)

Take $x\in \M_0$. We consider the inverse image $\pi^{-1}(x)\subset\M$
and denote it by $\M_x$. (When $x = 0$, this is denoted by $\La$
usually.) Then the convolution gives $\bigoplus H_{\topdeg-p}(\M_x)$ a
structure of a module of $H_*(Z)$. Here `$\topdeg$' is the difference
of complex dimensions of $\M$ and the stratum containing $x$.
\begin{NB}
  We can put any degree to $0$ as $H_{\topdeg-p}(Z) \otimes H_k(\M_x)\to
  H_{k-p}(\M_x)$.
\end{NB}

Similarly we can define a graded algebra structure on $H_*(Z^{\C^*}) =
\bigoplus H_{\topdeg-p}(Z^{\C^*})$, where `$\topdeg$' means the
complex dimension of $\M^{\C^*}\times\M^{\C^*}$, possibly different on
various connected components.
By \subsecref{subsec:fixed} it is close to
\begin{equation*}
  \bigoplus_{\bv^1+\bv^2 = \bv^2} H_*(Z(\bv^1,\bw^1))
  \otimes H_*(Z(\bv^2,\bw^2)),
\end{equation*}
but is different in general, as explained in \remref{rem:diff}.

We denote by $\M^{\C^*}_x$ the inverse image
$(\pi^{\C^*})^{-1}(x)$ in $\M^{\C^*}$ for $x\in\M_0^{\C^*}$.
Its homology $\bigoplus H_{\topdeg-p}(\M^{\C^*}_x)$ is a graded module
of $H_*(Z^{\C^*})$.
Here `$\topdeg$' is the difference of complex dimensions of
$\M^{\C^*}$ (resp.\ $\M$) and the stratum containing $x$.

\subsection{Convolution by $Z_{\fT}$}

Take $x\in \M_0^{\C^*}$. We consider the inverse image
$(p\circ\pi_{\fT})^{-1}(x)\subset\fT\subset\M$ and denote it by
$\fT_x$. (When $x = 0$, this is denoted by $\widetilde\fT$ in
\subsecref{subsec:variety}.)
By the convolution product its homology
$H_*(\fT_x) = \bigoplus H_{\topdeg-p}(\fT_x)$ is a graded module of
$H_*(Z)$.
 Here
`$\topdeg$' is the difference of complex dimensions of $\M$ and the
stratum containing $x$.
\begin{NB}
  This is natural as it is of expected dimension of $\fT_x$:
  $\fT_{\alpha,x}$ is a vector bundle over $\M_{\alpha,x}$ whose rank
  is half of the codimension of $\M^{\C^*}$ in $\M$.
\end{NB}%

Let $\fT_{\alpha,x}$, $\fT_{\le\alpha,x}$, $\fT_{<\alpha,x}$ be the
intersection of $\fT_x$ with $\fT_\alpha$, $\fT_{\le\alpha}$,
$\fT_{<\alpha}$ respectively. We have a short exact sequence
\begin{equation*}
  0 \to H_{\topdeg - p}(\fT_{<\alpha,x}) \to
  H_{\topdeg - p}(\fT_{\le\alpha,x}) \to H_{\topdeg - p}(\fT_{\alpha,x})\to 0.
\end{equation*}
(See \cite[\S3]{Na-Tensor} and \eqref{eq:short} below.) 

Let us restrict the projection $p_\alpha\colon
\fT_\alpha\to \M_\alpha$ in \eqref{eq:vector} to $\fT_{\alpha,x}$.
As $\pi^{\C^*}\circ p_\alpha = p\circ \pi$, it identifies
$\fT_{\alpha,x}$ with its inverse image of $\M_{\alpha,x} \defeq
\M_\alpha\cap \M^{\C^*}_x$.
Therefore we can replace the last term of the short exact sequence
by $H_{\topdeg-p}(\M_{\alpha,x})$ thanks to the Thom isomorphism:
\begin{equation}\label{eq:short1}
  0 \to H_{\topdeg - p}(\fT_{<\alpha,x}) \to
  H_{\topdeg - p}(\fT_{\le\alpha,x}) \to 
  H_{\topdeg-p}(\M_{\alpha,x})\to 0.
\end{equation}
Our convention of `$\topdeg$' is compatible for $\fT_x$ and $\M_x$ as
the rank of the vector bundle is the half of codimension of
$\M_\alpha$ in $\M$.
Since $\fT_{\le\alpha} = \fT$ when $\alpha$ is the maximal element, we get
\begin{Lemma}
  $H_{\topdeg-p}(\fT_x)$ has a filtration whose associated graded is
  isomorphic to $H_{\topdeg-p}(\M^{\C^*}_x)$.
\end{Lemma}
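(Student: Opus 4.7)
The plan is to build the filtration by iterating the short exact sequence \eqref{eq:short1} along the total order on the index set $\{\alpha\}$ labelling components of $\M^{\C^*}$, and then to identify the successive quotients one component at a time.

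First I would enumerate the indices in increasing order $\alpha_1 < \alpha_2 < \cdots < \alpha_N$ and set
\[
  F_k \defeq H_{\topdeg - p}(\fT_{\le \alpha_k, x}).
\]
By construction of the total order, $\fT_{< \alpha_{k+1}} = \fT_{\le \alpha_k}$, so the short exact sequence \eqref{eq:short1} applied with $\alpha = \alpha_{k+1}$ becomes
\[
  0 \to F_k \to F_{k+1} \to H_{\topdeg - p}(\M_{\alpha_{k+1}, x}) \to 0.
\]
In particular each map $F_k \hookrightarrow F_{k+1}$ is injective, so by composing these injections one obtains a genuine increasing filtration $F_1 \subset F_2 \subset \cdots \subset F_N$. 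Since $\alpha_N$ is the maximal element we have $\fT_{\le \alpha_N} = \fT$, hence $F_N = H_{\topdeg - p}(\fT_x)$, and this is the filtration I would put on $H_{\topdeg - p}(\fT_x)$.

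For the bottom step, note that $\fT_{< \alpha_1, x} = \emptyset$, so $\fT_{\le \alpha_1, x} = \fT_{\alpha_1, x}$ and the Thom isomorphism for the vector bundle $p_{\alpha_1} \colon \fT_{\alpha_1, x} \to \M_{\alpha_1, x}$ (restricted from \eqref{eq:vector}) identifies $F_1$ with $H_{\topdeg - p}(\M_{\alpha_1, x})$. Combining this with the short exact sequences above, the associated graded is
\[
  \bigoplus_{k=1}^{N} F_k / F_{k-1} \;=\; \bigoplus_\alpha H_{\topdeg - p}(\M_{\alpha, x}) \;=\; H_{\topdeg - p}\bigl(\textstyle\bigsqcup_\alpha \M_{\alpha, x}\bigr) \;=\; H_{\topdeg - p}(\M^{\C^*}_x),
\]
using that $\M^{\C^*} = \bigsqcup_\alpha \M_\alpha$ and hence $\M^{\C^*}_x = \bigsqcup_\alpha \M_{\alpha, x}$.

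I do not expect any serious obstacle here: all the substantive work has already been done in establishing \eqref{eq:short1}, which packages together the long exact sequence of the pair $(\fT_{\le \alpha, x}, \fT_{< \alpha, x})$, the degree bound coming from \propref{prop:irr} (ensuring vanishing of Borel–Moore homology above the top degree and hence that the connecting map dies), and the Thom isomorphism used to rewrite the quotient. The present lemma is then a purely formal telescoping along the total order, and one only needs to be careful that the meaning of `$\topdeg$' is kept consistent between $\fT_{\alpha, x}$ and $\M_{\alpha, x}$ — which is exactly what the rank-equals-half-codimension statement for $p_\alpha$ guarantees.
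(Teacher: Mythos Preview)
Your proposal is correct and follows exactly the approach the paper intends: the paper's proof is essentially the single sentence ``Since $\fT_{\le\alpha} = \fT$ when $\alpha$ is the maximal element, we get'' immediately preceding the Lemma, and you have simply unpacked this telescoping argument along the total order in full detail.
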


Choice of splittings $H_{\topdeg-p}(\fT_{\le\alpha,x})\leftarrow
H_{\topdeg-p}(\M_{\alpha,x})$ in \eqref{eq:short1} for all $\alpha$
gives an isomorphism $H_{\topdeg-p}(\fT_x)\cong
H_{\topdeg-p}(\M^{\C^*}_x)$. Our next goal is to understand the space
of all splittings in a geometric way.

For this purpose we consider the top degree homology group
$H_{\topdeg}(Z_{\fT})$. They are spanned by lagrangian irreducible
components of $Z_{\fT}$ by \propref{prop:irr}.

Let $c\in H_{\topdeg}(Z_\fT)$ and $p\in \Z$.
The convolution product
\begin{equation*}
  a \mapsto c\ast a \defeq p_{1*}(c\cap p_2^*(a))
\end{equation*}
defines an operator 
\begin{equation}\label{eq:op}
  c\ast\colon H_{\topdeg-p}(\M^{\C^*}_x) \to H_{\topdeg-p}(\fT_x),
\end{equation}
where $p_1$, $p_2$ are projections from $\M\times\M^{\C^*}$ to the
first and second factors.
The degree shift $p$ is preserved by the argument in
\cite[\S8.9]{CG}. (If we choose $c$ from $H_{\topdeg-k}(Z_\fT)$, the
convolution maps $H_{\topdeg-p}$ to $H_{\topdeg-p-k}$.)
Note also that the above operation is well-defined as $p_1$ is proper,
while the operator $p_{2*}(c\cap p_1^*(-))$ is not in this setting.

An arbitrary class $c\in H_{\topdeg}(Z_{\fT})$ does not give a
splitting of \eqref{eq:short1}, as it is nothing to do with the
decomposition $\fT = \bigsqcup\fT_{\alpha}$. Let us write down a
sufficient condition to give a splitting.

Since $c$ is in the top degree, it is a linear combination of
fundamental classes of lagrangian irreducible components of
$Z_{\fT}$. From \propref{prop:irr}(1), half-dimensional irreducible
components are closures of half-dimensional irreducible components of
$Z_{\fT,\beta,\alpha}$ for some pair $\alpha$, $\beta$.
Therefore we can write
\begin{equation*}
  c = \sum_{\alpha,\beta} c_{\beta,\alpha}.
\end{equation*}
Moreover its proof there, the latter are pull-backs of
half-dimensional irreducible components of $Z_{\beta,\alpha}$ under
the projection $p_\beta\times\id_{\M_\alpha}$.

We impose the following conditions on $c$:
\begin{subequations}\label{eq:cond}
  \begin{align}
    & \text{$c_{\beta,\alpha} = 0$ unless $\alpha\ge\beta$},
\\
      & c_{\alpha,\alpha} = [\overline{(p_\alpha\times
        \id_{\M_\alpha})^{-1}(\Delta_{\M_\alpha})}].
  \end{align}
\end{subequations}
The first condition also means that $c$ is is in the image of
\(
  \bigoplus_\alpha H_{\topdeg}(Z_{\fT,\le\alpha,\alpha})\to
  H_{\topdeg}(Z_{\fT}).
\)
Note that $\bigsqcup_\alpha Z_{\fT,\le\alpha,\alpha}$ is a disjoint
union of closed subvarieties in $Z_{\fT}$, and hence the push-forward 
homomorphism is defined.

\begin{Proposition}\label{prop:split}
  Let $c\in H_{\topdeg}(Z_{\fT})$ with the conditions
  \textup({\rm\ref{eq:cond} a,b}\textup). Then $c\ast$ is an
  isomorphism and gives a splitting of \eqref{eq:short1} for all
  $\alpha$.
\end{Proposition}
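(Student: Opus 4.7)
My strategy is to show that $c\ast$ is upper triangular with invertible diagonal blocks with respect to natural structures on source and target. Filter the target $H_{\topdeg-p}(\fT_x)$ by the $H_{\topdeg-p}(\fT_{\le\alpha,x})$ and decompose the source as $H_{\topdeg-p}(\M^{\C^*}_x)=\bigoplus_\alpha H_{\topdeg-p}(\M_{\alpha,x})$, the latter being a genuine direct sum since the $\M_\alpha$ are disjoint connected components of $\M^{\C^*}$. Two claims suffice: (i) $c\ast$ sends $H_{\topdeg-p}(\M_{\alpha,x})$ into $H_{\topdeg-p}(\fT_{\le\alpha,x})$, and (ii) the composition of this restriction with the surjection in \eqref{eq:short1} is the identity on $H_{\topdeg-p}(\M_{\alpha,x})$. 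Together these say that the associated graded of $c\ast$ is a direct sum of identities, hence $c\ast$ is bijective, and its image gives splittings of \eqref{eq:short1} at each level $\alpha$.

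For (i), I would expand $c=\sum_{\beta,\gamma}c_{\beta,\gamma}$ according to the decomposition $Z_{\fT}=\bigsqcup Z_{\fT,\beta,\gamma}$ and observe that, for $a\in H_{\topdeg-p}(\M_{\gamma',x})$, only terms with $\gamma=\gamma'$ can contribute to $c\cap p_2^*(a)$, because $p_2^*(a)$ is supported on $\M\times\M_{\gamma',x}$ and the $\M_\gamma$ are pairwise disjoint. Condition \textup{(\ref{eq:cond}a)} then kills all surviving terms with $\beta\not\le\gamma'$, so after push-forward by the proper map $p_1$ the result is supported on $\bigcup_{\beta\le\gamma'}\fT_\beta\cap\pi^{-1}\text{(fiber)}=\fT_{\le\gamma',x}$.

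For (ii), I would work on the open stratum $\fT_\alpha\times\M_\alpha\subset\fT_{\le\alpha}\times\M_\alpha$, where, by condition \textup{(\ref{eq:cond}b)}, $c_{\alpha,\alpha}$ restricts to the vector-bundle pullback $(p_\alpha\times\id_{\M_\alpha})^{-1}(\Delta_{\M_\alpha})$. A standard computation of convolution by the pullback of a diagonal (in the spirit of \cite[\S2.7]{CG}) identifies $c_{\alpha,\alpha}\ast a$ on this open part with the Thom pullback $p_\alpha^* a \in H_{\topdeg-p}(\fT_{\alpha,x})$, which is precisely the class used to trivialise the graded quotient in \eqref{eq:short1}. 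The remaining terms $c_{\beta,\alpha}$ with $\beta<\alpha$, together with the closure contributions of $\overline{c_{\alpha,\alpha}}$ supported over $\fT_{<\alpha}$, all land in $H_{\topdeg-p}(\fT_{<\alpha,x})$ and therefore vanish after passage to the quotient.

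The main obstacle is making (ii) rigorous in the presence of the genuine closure: one must check that the extra components of $\overline{c_{\alpha,\alpha}}$ sit over $\fT_{<\alpha}$ (so that they do not pollute the associated graded), verify the compatibility of degree shifts between the rank of the bundle $\fT_\alpha\to\M_\alpha$, the half-codimension of $\M_\alpha$ in $\M$, and the Thom isomorphism in \eqref{eq:short1}, and combine these with base change for the proper projection $p_1$. Once these bookkeeping points are settled, the associated graded of $c\ast$ is the identity and the proposition follows immediately by a formal filtration argument.
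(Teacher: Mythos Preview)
Your proposal is correct and follows essentially the same approach as the paper: condition (a) forces $c\ast$ to map $H_{\topdeg-p}(\M_{\alpha,x})$ into $H_{\topdeg-p}(\fT_{\le\alpha,x})$, and condition (b) makes the composition with the quotient map in \eqref{eq:short1} the identity, so $c\ast$ is upper-triangular with identity diagonal blocks and hence gives the desired splittings. The paper's proof is just a two-sentence version of this; your discussion of closure contributions and degree-shift compatibility spells out bookkeeping that the paper leaves implicit.
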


We will show the converse in \subsecref{subsec:desc}:
$c\ast$ gives a splitting if and only if $c$ satisfies
\eqref{eq:cond}.

\begin{proof}
By the first condition the operator $c\ast$ restricts to
$H_{\topdeg-p}(\M_{\alpha,x})\to
H_{\topdeg-p}(\fT_{\le\alpha,x})$. And by the second condition it
gives the identity if we compose $H_{\topdeg-p}(\fT_{\le\alpha,x})\to
H_{\topdeg-p}(\M_{\alpha,x})$.
Thus $c\ast$ gives a splitting of \eqref{eq:short1}.
\end{proof}

Next we construct the inverse of $c\ast$ also by a convolution
product. We consider
\begin{equation*}
  \fT_0^- \defeq \{ x\in\M_0\mid \text{$\lim_{t\to\infty}\lambda(t)x$ exists}
  \}, 
\end{equation*}
and the similarly defined variety $\fT^-$ also by replacing
$t\to 0$ by $t\to \infty$. We have the inclusion
\(
   i^-\colon \fT^-_0\to \M_0
\)
and the projection
\(
  p^-\colon \fT^-_0\to \M_0^{\C^*}.
\)
Note also that $\fT_0\cap\fT^-_0 = \M_0^{\C^*}$.

Let us define $Z_{\fT^-}$ as the fiber product
$\M^{\C^*}\times_{\M_0^{\C^*}}\fT^-$, and consider it as a subvariety
in $\M^{\C^*}\times\M$. We swap the first and second factors from
$Z_{\fT}$ as it becomes more natural when we consider a composite of
correspondences.

Since $p_1$ is proper on $Z_{\fT^-}\cap p_2^{-1}(\fT_{x})$, a class
$c^-\in H_{\topdeg}(Z_{\fT^-})$ defines the well-defined convolution
product $c^-\ast a = p_{1*}(c^-\cap p_2^*(a))$ for $a\in
H_{\topdeg-p}(\fT_x)$, and defines an operator
\begin{equation}\label{eq:c-}
  c^-\ast \colon H_{\topdeg-p}(\fT_x)\to H_{\topdeg-p}(\M^{\C^*}_x).
\end{equation}
By the associativity of the convolution product, the composite
$c^-\ast (c\ast \bullet) \in \End(H_{\topdeg-p}(\M^{\C^*}_x))$ is
given by the convolution of
\begin{equation*}
  c^-\ast c = p_{13*}(p_{12}^*(c^-)\cap p_{23}^*(c)),
\end{equation*}
where $p_{ij}$ is the projection from
$\M^{\C^*}\times\M\times\M^{\C^*}$ to the product of the
$i^{\mathrm{th}}$ and $j^{\mathrm{th}}$-factors.

\begin{Proposition}\label{prop:inverse}
  Suppose that $c\in H_{\topdeg}(Z_{\fT})$ satisfies the conditions
  \textup({\rm\ref{eq:cond} a,b}\textup). Then there exists a class
  $c^{-1}\in H_{\topdeg}(Z_{\fT^-})$ such that $c^{-1}\ast c$ is equal
  to $[\Delta_{\M^{\C^*}}]$.
\end{Proposition}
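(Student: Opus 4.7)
The plan is to construct $c^{-1}$ by combining a mirror construction on $Z_{\fT^-}$ with an algebraic inversion inside the convolution subalgebra $H_{\topdeg}(Z^{\C^*})$.

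First I would develop the analogue of \subsecref{subsec:variety}--\propref{prop:split} for $t\to\infty$: a decomposition $\fT^-=\bigsqcup_\alpha \fT^-_\alpha$, where $\fT^-_\alpha\to\M_\alpha$ is the vector subbundle of \emph{negative} weight spaces in $N_{\M_\alpha/\M}$; a decomposition $Z_{\fT^-}=\bigsqcup_{\beta,\alpha}Z_{\fT^-,\beta,\alpha}$ with $Z_{\fT^-,\beta,\alpha}\subset \M_\beta\times \fT^-_\alpha$; and the analogue of \propref{prop:irr}. Pick some $d\in H_{\topdeg}(Z_{\fT^-})$ satisfying the mirror of the conditions in \eqref{eq:cond}: $d=\sum d_{\beta,\alpha}$ with $d_{\beta,\alpha}=0$ unless $\beta\ge\alpha$ (the order is reversed, since the closure relations among the $\fT^-_\alpha$ are opposite to those among the $\fT_\alpha$), and $d_{\alpha,\alpha}=[\overline{(\id_{\M_\alpha}\times p^-_\alpha)^{-1}(\Delta_{\M_\alpha})}]$.

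Next I would compute $d\ast c\in H_{\topdeg}(Z^{\C^*})$ and decompose it as $\sum (d\ast c)_{\beta,\alpha}$ along $Z^{\C^*}=\bigsqcup Z_{\beta,\alpha}$. The key claim is that $d\ast c$ is unipotent: $(d\ast c)_{\beta,\alpha}=0$ unless $\beta\le\alpha$, and $(d\ast c)_{\alpha,\alpha}=[\Delta_{\M_\alpha}]$. The triangular part follows from the support conditions on $c$ and $d$: any stratum $\gamma$ in the middle $\M$-factor that contributes satisfies $\beta\ge\gamma$ (from $d$) and $\gamma\le\alpha$ (from $c$), forcing $\beta\le\alpha$. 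For the diagonal identification, the convolution of the two diagonal cycles reduces, via the vector bundle projections $p_\alpha$ and $p^-_\alpha$, to the scheme-theoretic intersection of the positive and negative weight subbundles of $N_{\M_\alpha/\M}$; by the weight decomposition $T\M|_{\M_\alpha}=T\M_\alpha\oplus N^+\oplus N^-$ (weights $0,+1,-1$) these subbundles meet transversely along the zero section $\M_\alpha$, giving exactly $[\Delta_{\M_\alpha}]$.

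Finally I would invert. Triangular classes in $H_{\topdeg}(Z^{\C^*})$ form a subalgebra, and the strictly upper-triangular part is a nilpotent ideal because the indexing poset is finite; hence $d\ast c=[\Delta_{\M^{\C^*}}]+N$ is invertible with inverse $B=\sum_{k\ge 0}(-N)^{\ast k}$, a finite geometric series. Set $c^{-1}:=B\ast d$. By the associativity of convolution, $c^{-1}\ast c=B\ast(d\ast c)=[\Delta_{\M^{\C^*}}]$. A short support chase using the fibre product description of $Z_{\fT^-}$ shows $\operatorname{supp}(B\ast d)\subset Z_{\fT^-}$, so $c^{-1}\in H_{\topdeg}(Z_{\fT^-})$ as required. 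The main obstacle I anticipate is the transversality computation in the diagonal step: one must carefully identify $\fT_\alpha$ and $\fT^-_\alpha$ with the positive and negative parts of the symplectic normal bundle in a neighbourhood of $\M_\alpha$ and verify that the intersection multiplicity is one; everything else is formal manipulation in the convolution algebra.
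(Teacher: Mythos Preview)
Your approach is exactly the paper's: build a mirror class $c^-$ on $Z_{\fT^-}$ satisfying the analogue of \eqref{eq:cond}, observe that $c^-\ast c\in H_{\topdeg}(Z^{\C^*})$ is unipotent with diagonal blocks $[\Delta_{\M_\alpha}]$, invert by a geometric series in the convolution algebra, and set $c^{-1}=(c^-\ast c)^{-1}\ast c^-$. You also spell out the diagonal step (transverse intersection of $N^+$ and $N^-$) and the nilpotence argument more explicitly than the paper, which simply asserts unipotence.

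There is, however, a sign slip that breaks your triangularity argument as written. The correct mirror of \eqref{eq:cond}(a) is $d_{\beta,\alpha}=0$ unless $\beta\le\alpha$ (first index $\le$ second), not $\beta\ge\alpha$: the closure relations for $\fT^-$ are reversed, so the closed pieces are $\fT^-_{\ge\beta}$, and one wants $d$ supported in $\bigsqcup_\beta \M_\beta\times\fT^-_{\ge\beta}$. With your stated inequality the deduction ``$\beta\ge\gamma$ and $\gamma\le\alpha$ force $\beta\le\alpha$'' is simply false. Moreover, the ``stratum $\gamma$ in the middle $\M$-factor'' is ambiguous: the middle point carries a $\fT$-index (for $c$) and a $\fT^-$-index (for $d$), and they need not coincide. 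The clean argument is: if $(x_1,y,x_2)$ contributes to $(d\ast c)_{\beta,\alpha}$ then $y\in\fT_{\le\alpha}\cap\fT^-_{\ge\beta}$; since $\fT_{\le\alpha}$ is closed and $\C^*$-stable, $\lim_{t\to\infty}\lambda(t)y\in\fT_{\le\alpha}$, while this limit lies in some $\M_{\gamma}$ with $\gamma\ge\beta$, and $\M_\gamma\subset\fT_\gamma$ forces $\gamma\le\alpha$, hence $\beta\le\alpha$. Once this is fixed, the rest of your outline goes through and matches the paper.
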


\begin{proof}
We have decomposition
\(
  \fT^- = \bigsqcup_\alpha \fT^-_\alpha,
\)
and the projection
\(
  p^-_\alpha\colon \fT^-_\alpha \to \M_\alpha.
\)
The index set $\{\alpha\}$ is the same as before, as it parametrizes
the connected components of $\M^{\C^*}$.

Since the order $<$ plays the opposite role for $\fT^-$,
\begin{equation*}
  \fT^-_{\ge\alpha} \defeq \bigsqcup_{\beta\ge\alpha} \fT^-_\beta,
\qquad
  \fT^-_{>\alpha} \defeq \bigsqcup_{\beta>\alpha} \fT^-_\beta,
\end{equation*}
are closed subvarieties in $\fT^-$.

We define $Z_{\fT^-,\gamma,\beta} \defeq Z_{\fT^-}\cap
(\M_\gamma\cap\fT^-_\beta)$ and $Z_{\fT^-,\gamma,\ge \beta}$ as
above. We then impose the following conditions on $c^- = \sum
c^-_{\gamma,\beta}$:
\begin{subequations}
  \begin{align*}
    & \text{$c^-_{\gamma,\beta} = 0$ unless $\gamma\le\beta$},
    \\
    & c^-_{\gamma,\gamma} = [\overline{(\id_{\M_\gamma}\times
      p_\gamma^-)^{-1}(\Delta_{\M_\gamma})}].
  \end{align*}
\end{subequations}
These conditions imply that
\(
  c^- \ast c 
\)
is unipotent, more precisely is upper triangular with respect to the
block decomposition $H_{\topdeg}(Z^{\C^*}) = \bigoplus_{\gamma,\alpha}
H_{\topdeg}(Z_{\gamma,\alpha})$, and the
$H_{\topdeg}(Z_{\alpha,\alpha})$ component is $[\Delta_{\M_\alpha}]$
for all $\alpha$. Noticing that we can represent $(c^-\ast c)^{-1}$ as
a class in $H_{\topdeg}(Z^{\C^*})$ by the convolution product, we
define $c^{-1} = (c^-\ast c)^{-1} \ast c^-\in H_{\topdeg}(Z_{\fT^-})$
to get $c^{-1}\ast c = [\Delta_{Z^{\C^*}}]$.

\begin{NB}
Earlier version: I think that this does not make sense.

Since $c^{-1}$ is the inverse matrix of $c$ if we represent $c$,
$c^{-1}$ in bases given by irreducible components of $Z_{\fT}$,
$Z_{\fT^-}$, the uniqueness is clear.
\end{NB}%
\end{proof}

\begin{Remark}\label{rem:inverse}
  If we consider the convolution product in the opposite order, we get
  \begin{equation*}
    c\ast c^{-1} \in H_{\topdeg}(\fT\times_{\M_0^{\C^*}}\fT^-),
  \end{equation*}
  where $\fT\to \M_0^{\C^*}$ (resp.\ $\fT\to \M_0^{\C^*}$) is $p\circ
  \pi$ (resp.\ $p^-\circ \pi$). In general, there are no inclusion
  relations between $\fT\times_{\M_0^{\C^*}}\fT^-$ and $Z =
  \M\times_{\M_0}\M$. Therefore the equality $c\ast c^{-1} =
  [\Delta_{\M}]$ does not make sense at the first sight.
  However the actual thing we need is the operator $c^{-}\ast$ in
  \eqref{eq:c-}.
  \propref{prop:inverse} implies that the composite $c^{-1}\ast(c\ast)$
  of the operator is the identity on $H_{\topdeg-p}(\M^{\C^*}_x)$ for
  each $x$.
  Then we have $c\ast (c^{-1}\ast)$ is also the identity on
  $H_{\topdeg-p}(\fT_x)$, as both $H_{\topdeg-p}(\M^{\C^*}_x)$ and
  $H_{\topdeg-p}(\fT_x)$ are vector spaces of same dimension.

  Later we will see that we do not loose any information when we
  consider $c^{-1}$ as such an operator. In particular, we will see
  that $c^{-1}$ is uniquely determined by $c$, i.e., we will prove the
  uniqueness of the left inverse in the proof of
  \thmref{thm:coproduct}.
\end{Remark}

\subsection{Coproduct by convolution}

We define a coproduct using the convolution in this subsection.

Let $c\in H_{\topdeg}(Z_{\fT})$ be a class satisfying the conditions
\eqref{eq:cond}. We take the class $c^{-1}\in H_{\topdeg}(Z_{\fT^-})$ as
in \propref{prop:inverse}. We define a homomorphism
\(
  \Delta_c \colon 
  H_{*}(Z)
  \to
  H_{*}(Z^{\C^*})
\)
by
\begin{equation}\label{eq:Delta}
  \Delta_c (\bullet) = c^{-1}\ast \bullet \ast c = 
  p_{14*}(p_{12}^*(c^{-1})\cap p_{23}^*(\bullet)\cap p_{34}^*(c)),
\end{equation}
where we consider the convolution product in $\M^{\C^*}\times \M\times
\M \times \M^{\C^*}$. This preserves the grading.

Since $c^{-1}\ast c = 1$, we have $\Delta_c(1) = 1$. But it is not
clear at this moment that $\Delta_c$ is an algebra homomorphism since we
do not know $c\ast c^{-1} = 1$, as we mentioned in
\remref{rem:inverse}. The proof is postponed until the next subsection.

\subsection{Sheaf-theoretic analysis}

In this subsection, we reformulate the result in the previous
subsection using perverse sheaves.

By \cite[\S8.9]{CG} we have a natural graded algebra isomorphism
\begin{equation*}
  H_*(Z) \cong \Ext^\bullet_{D(\M_0)}(\pi_!\cC_\M,\pi_!\cC_\M),
\end{equation*}
where the multiplication on the right hand side is given by the Yoneda
product and the grading is the natural one. Here the semismallness of
$\pi$ guarantees that the grading is preserved.

We have similarly
\begin{equation*}
  H_*(Z^{\C^*}) \cong 
  \Ext^\bullet_{D(\M_0^{\C^*})}
  (\pi^{\C^*}_!\cC_{\M^{\C^*}},\pi^{\C^*}_!\cC_{\M^{\C^*}}).
\end{equation*}
In this subsection we define a functor sending $\pi_!\C_{\M}$ to
$\pi^{\C^*}_!\C_{\M^{\C^*}}$ to give a homomorphism $H_*(Z)\to
H_*(Z^{\C^*})$ which coincides with $\Delta_c$.

For a later purpose, we slightly generalize the setting from the
previous subsection.
If $\bv'\le\bv$, we have a closed embedding $\M_0(\bv',\bw)\subset\M_0
= \M_0(\bv,\bw)$, given by adding the trivial representation with
dimension $\bv - \bv'$.
\begin{NB}
Then we can modify fiber products studied in the previous subsection
by replacing the second factor by $\M(\bv',\bw)$. For example,
a variant of $Z_{\fT}$ is
\( 
   \fT(\bv,\bw)\times_{\M_0(\bv,\bw)^{\C^*}} \M(\bv',\bw)^{\C^*}.
\)
\end{NB}

We consider the push-forward $\pi_!\cC_{\M(\bv',\bw)}$ as a complex in
$D(\M_0)$.
By the decomposition theorem \cite{BBD} it is a semisimple complex.
Furthermore $\pi_!\cC_{\M(\bv',\bw)}$ is a perverse sheaf, as
$\pi\colon\M(\bv',\bw)\to \pi(\M(\bv',\bw))$ is semismall \cite{BM}.
Let $P(\M_0)$ denote the full subcategory of $D(\M_0)$ consisting of
all perverse sheaves that are finite direct sums of perverse sheaves
$L$, which are isomorphic to direct summand of
$\pi_!\cC_{\M(\bv',\bw)}$ with various $\bv'$.

Replacing $\M_0$, $\M(\bv',\bw)$ by $\M_0^{\C^*}$,
$\M(\bv',\bw)^{\C^*}$ respectively, we introduce the full subcategory
$P(\M_0^{\C^*})$ of $D(\M_0^{\C^*})$ as above.
Here we replace $\pi$ by $\pi^{\C^*}\colon
\M(\bv',\bw)^{\C^*}\to\M_0(\bv',\bw)^{\C^*}$, which is the restriction
of $\pi$.

Let $i\colon \fT_0\to\M_0$ and $p\colon \fT_0\to\M_0^{\C^*}$ as in
\subsecref{subsec:variety}. We consider $p_!i^*\colon D(\M_0)\to
D(\M_0^{\C^*})$.
This is an analog of the restriction functor in \cite[\S4]{Lu-can2},
\cite[\S9.2]{Lu-book}, and was introduced in the quiver variety
setting in \cite[\S5]{VV2}.
It is an example of the hyperbolic localization.

\begin{Lemma}\label{lem:perverse}
  \textup{(1)} The functor $p_!i^*$ sends $P(\M_0)$ to $P(\M_0^{\C^*})$.

  \textup{(2)} Let $\bv'\le\bv$. The complex $p_!i^*
  \pi_!\cC_{\M(\bv',\bw)}$ has a canonical filtration whose associated
  graded is canonically identified with
  $\pi^{\C^*}_!\cC_{\M(\bv',\bw)^{\C^*}}$.
\end{Lemma}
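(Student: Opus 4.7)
The strategy is to establish (2) first by a stratification-and-Thom-isomorphism computation on the \(\fT\)-side, and then derive (1) by combining (2) with semismallness of \(\pi^{\C^*}\) and semisimplicity of hyperbolic localization.

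For (2), proper base change along the Cartesian square of \(i\colon\fT_0\hookrightarrow\M_0\) and \(\pi\colon\M(\bv',\bw)\to\M_0\) gives
\[
  p_!i^*\pi_!\cC_{\M(\bv',\bw)} \;=\; (p\circ\pi_{\fT})_!\,\C_{\fT(\bv',\bw)}[\dim_{\C}\M(\bv',\bw)],
\]
where \(\pi_{\fT}\) is the restriction of \(\pi\) to \(\fT(\bv',\bw)=\pi^{-1}(\fT_0)\). The locally closed stratification \(\fT(\bv',\bw)=\bigsqcup_\alpha \fT_\alpha\) with its closed subvarieties \(\fT_{\le\alpha}\), via iterated distinguished triangles \(j_!j^*\to\id\to i_*i^*\) along the total order on \(\alpha\), produces a canonical filtration on \(\C_{\fT(\bv',\bw)}\) and hence on its \((p\circ\pi_{\fT})_!\)-pushforward. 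The \(\alpha\)-th graded piece is \(((p\circ\pi_{\fT})|_{\fT_\alpha})_!\,\C_{\fT_\alpha}[\dim_{\C}\M(\bv',\bw)]\), and on \(\fT_\alpha\) the map factors as \(\fT_\alpha\xrightarrow{p_\alpha}\M_\alpha\xrightarrow{\pi^{\C^*}}\M_0^{\C^*}\), where \(p_\alpha\) is a complex vector bundle of rank \(d_\alpha=\tfrac12\codim_{\C}\M_\alpha\). The Thom isomorphism \((p_\alpha)_!\C_{\fT_\alpha}=\C_{\M_\alpha}[-2d_\alpha]\), combined with the identity \(\dim_{\C}\M(\bv',\bw)-2d_\alpha=\dim_{\C}\M_\alpha\), identifies the graded piece with \(\pi^{\C^*}_!\cC_{\M_\alpha}\). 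Summing over \(\alpha\) yields \(\pi^{\C^*}_!\cC_{\M(\bv',\bw)^{\C^*}}\), as asserted.

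For (1), I first check that \(\pi^{\C^*}\) is semismall: by \eqref{eq:decomp1} its source is a disjoint union of products \(\M(\bv^1,\bw^1)\times\M(\bv^2,\bw^2)\) on which it is a product of the semismall \(\pi\)'s postcomposed with the finite morphism \(\sigma\) of \lemref{lem:strat}. Hence each graded piece \(\pi^{\C^*}_!\cC_{\M_\alpha}\) in (2) is perverse, and since perverse sheaves are stable under extensions, \(p_!i^*\pi_!\cC_{\M(\bv',\bw)}\) is itself perverse. To promote perversity to membership in \(P(\M_0^{\C^*})\), I would invoke Braden's theorem on hyperbolic localization: \(\pi_!\cC_{\M(\bv',\bw)}\) is semisimple by the decomposition theorem (smoothness of \(\M(\bv',\bw)\) plus properness of \(\pi\)), and hyperbolic localization sends \(\C^*\)-equivariant semisimple perverse sheaves to semisimple perverse sheaves. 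A semisimple perverse sheaf is determined up to isomorphism by its constituents in any finite filtration, so \(p_!i^*\pi_!\cC_{\M(\bv',\bw)}\cong \pi^{\C^*}_!\cC_{\M(\bv',\bw)^{\C^*}}\), which lies in \(P(\M_0^{\C^*})\) by definition.

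The main obstacle is this last step: verifying that Braden's theorem applies in our affine, non-projective setting. The required inputs---\(\M_0\) affine with an algebraic \(\C^*\)-action, \(\fT_0\) the genuine attracting locus, and \(\pi\) a \(\C^*\)-equivariant proper morphism---are all in place, and the natural transformation \(p_!i^*\to (p^-)_*(i^-)^!\) should be the relevant isomorphism when applied to the equivariant complex \(\pi_!\cC_{\M(\bv',\bw)}\). An alternative avoiding Braden would be to split the filtration of (2) by hand: a class \(c\in H_{\topdeg}(Z_{\fT})\) satisfying \eqref{eq:cond} produces, via \propref{prop:split}, a splitting at the level of stalkwise homology that, with some sheaf-theoretic unwinding, upgrades to an honest direct-sum decomposition of the complex.
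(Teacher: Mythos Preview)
Your argument is correct and, for part~(2), follows exactly the route the paper takes: proper base change to rewrite $p_!i^*\pi_!\cC_{\M(\bv',\bw)}$ as $(p\circ\pi_{\fT})_!\cC_{\fT(\bv',\bw)}$, the open/closed triangles along the stratification $\fT=\bigsqcup_\alpha\fT_\alpha$ to produce the filtration, and the Thom isomorphism for the vector bundle $p_\alpha\colon\fT_\alpha\to\M_\alpha$ (with the rank-$=\tfrac12\codim$ count) to identify each graded piece with $\pi^{\C^*}_!\cC_{\M_\alpha}$. The paper records this as the short exact sequence~\eqref{eq:short}, citing \cite[Lemma~5.1]{VV2} and \cite[\S4]{Lu-can2}.

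The only genuine difference is in how semisimplicity (hence membership in $P(\M_0^{\C^*})$) is obtained in part~(1). The paper appeals to Lusztig's argument \cite[4.7]{Lu-can2}, which directly proves that each filtered piece $(p\circ\pi_{\le\alpha})_!\cC_{\fT_{\le\alpha}}$ is semisimple; you instead invoke Braden's theorem to get that hyperbolic localization preserves semisimplicity (via purity) and then read off the isomorphism with $\pi^{\C^*}_!\cC_{\M(\bv',\bw)^{\C^*}}$ from the graded pieces. Both are valid---the paper itself uses Braden later in \lemref{lem:T-}, so your worry about applicability in this affine setting is unfounded---and your route is arguably cleaner conceptually, while Lusztig's is more self-contained. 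Your observation that $\pi^{\C^*}=\sigma\circ(\pi\times\pi)$ with $\sigma$ finite, hence semismall, is the right way to see perversity of the graded pieces; the paper leaves this implicit (it appears in the proof of \propref{prop:irr}). Your alternative via a splitting class $c$ is exactly what the paper does next, in \propref{prop:split} and \thmref{thm:coproduct}, but that logically comes \emph{after} this lemma rather than as its proof.
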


This was proved in \cite[Lemma~5.1]{VV2} for quiver varieties of
finite type, but the proof actually gives the above statements for
general types.

Let us recall how the filtration is defined. Let us assume $\bv' =
\bv$ for brevity. Consider the diagram
\begin{equation*}
\begin{CD}
 \M @<i'<< \fT = \bigsqcup \fT_\alpha @>{\bigsqcup p_\alpha}>>
 \bigsqcup \M_\alpha = \M^{\C^*}
\\
  @V{\pi}VV  @V{\pi_{\fT}}VV @VV{\pi^{\C^*}}V
\\
 \M_0 @<<i< \fT_0 @>>p> \M_0^{\C^*}
\end{CD}
\end{equation*}
where $i'$ is the inclusion, $\pi_{\fT}$ is the restriction of $\pi$
to $\fT$, and $p_\alpha$ is the projection of the vector bundle
\eqref{eq:vector}.
Note that each $p_\alpha$ is a morphism, but the union $\bigsqcup
p_\alpha$ does {\it not\/} gives a morphism $\fT\to\M^{\C^*}$.

Recall the order $<$ on the set $\{\alpha\}$ of fixed point
components, and closed subvarieties $\fT_{\le\alpha}$, $\fT_{<\alpha}$
in \subsecref{subsec:variety}.
Let $\pi_{\le\alpha}$, $\pi_{<\alpha}$ be the restrictions of
$\pi_{\fT}$ to $\fT_{\le\alpha}$, $\fT_{<\alpha}$ respectively. Then
the main point in \cite[Lemma~5.1]{VV2} (based on \cite[\S4]{Lu-can2})
was to note that there is the canonical short exact sequence
\begin{equation}\label{eq:short}
  0 \to \pi^{\C^*}_! \cC_{\M_\alpha}
    \to (p\circ \pi_{\le\alpha})_! \cC_{\fT_{\le\alpha}}
    \to (p\circ \pi_{<\alpha})_! \cC_{\fT_{<\alpha}} 
  \to 0.
\end{equation}
Since $\fT_{\le\alpha} = \fT$ for the maximal element $\alpha$ and we
have $i^*\pi_!\cC_{\M} = \pi_{\fT!}i^{\prime *}\cC_{\M}$, this gives
the desired filtration.

During the proof it was also shown that $(p\circ \pi_{\le\alpha})_!
\cC_{\fT_{\le\alpha}}$, $(p\circ \pi_{<\alpha})_! \cC_{\fT_{<\alpha}}$
are semisimple.  (It is not stated explicitly in \cite{VV2}, but comes
from \cite[4.7]{Lu-can2}.)
Therefore the short exact sequence \eqref{eq:short} splits, and hence
$p_! i^* \pi_! \cC_{\M}$ and $\bigoplus_\alpha \pi^{\C^*}_!
\cC_{\M_\alpha} = \pi^{\C^*}_!\cC_{\M^{\C^*}}$ is isomorphic. The
choice of an isomorphism depends on the choice of splittings of the
above short exact sequences for all $\alpha$.

The exact sequence \eqref{eq:short} is the sheaf theoretic counterpart
of \eqref{eq:short1}. More precisely it is more natural to consider
the transpose of \eqref{eq:short1}:
\begin{equation}\label{eq:short3}
  0 \to (p\circ \pi_{<\alpha})_* \cC_{\fT_{<\alpha}} 
    \to (p\circ \pi_{\le\alpha})_* \cC_{\fT_{\le\alpha}}
    \to 
    \pi^{\C^*}_* \cC_{\M_\alpha}
  \to 0,
\end{equation}
obtained by applying the Verdier duality.

Recall that we study $H_{\topdeg}(Z_{\fT})$ in order to describe a
splitting of \eqref{eq:short1} by convolution.

\begin{Lemma}\label{lem:Yoneda}
  We have a natural isomorphism
  \begin{equation*}
    H_{\topdeg}(Z_{\fT}) \cong
    \Hom_{D(\M_0^{\C^*})}(p_! i^* \pi_! \cC_{\M},
    \pi^{\C^*}_{!} \cC_{\M^{\C^*}}).
  \end{equation*}
\end{Lemma}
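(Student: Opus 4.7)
The plan is to identify both sides by the standard Chriss--Ginzburg machine for convolution algebras, applied to the fiber product $Z_\fT = \fT \times_{\M_0^{\C^*}} \M^{\C^*}$. The key is to rewrite $p_! i^* \pi_! \cC_\M$ as a single push-forward from $\fT$, then chase $\Hom$ across two adjunctions and one dual base change.

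First I would use proper base change on the cartesian square
\begin{equation*}
\begin{CD}
\fT @>i'>> \M \\
@V{\pi_\fT}VV @VV{\pi}V \\
\fT_0 @>>i> \M_0
\end{CD}
\end{equation*}
Since $\pi$ is proper, $i^*\pi_! = \pi_{\fT !}\,i^{\prime *}$, and because $\M$ is smooth, $i^{\prime *}\cC_\M$ is just the constant sheaf on $\fT$ (with the shift of $\cC_\M$). Hence $p_! i^* \pi_! \cC_\M = (p\circ\pi_\fT)_! \,i^{\prime*}\cC_{\M}$. This replaces the LHS complex by a single $g_!$ with $g = p\circ\pi_\fT$.

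Next, apply the $(g_!,g^!)$-adjunction and then the dual base change for the defining cartesian square of $Z_\fT$,
\begin{equation*}
\begin{CD}
Z_\fT @>q_1>> \fT \\
@V{q_2}VV @VV{g}V \\
\M^{\C^*} @>>\pi^{\C^*}> \M_0^{\C^*}
\end{CD}
\end{equation*}
Since $\pi^{\C^*}$ is proper (being the restriction of the proper map $\pi$), $\pi^{\C^*}_! = \pi^{\C^*}_*$, and the dual base change gives $g^!\pi^{\C^*}_! = q_{1*}q_2^! = q_{1!}q_2^!$, where the last equality uses that $q_1$ is proper as a base change of $\pi^{\C^*}$. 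Because $\M^{\C^*}$ is smooth, $q_2^!\cC_{\M^{\C^*}}$ is the dualizing complex $\omega_{Z_\fT}$ up to a shift by the complex dimension of the relevant component of $\M^{\C^*}$.

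A further application of the $(q_1^*,q_{1!})$-adjunction converts the hom into $\Hom(\cC_{Z_\fT},\omega_{Z_\fT}[\text{shift}])$, which by definition is a Borel--Moore homology group of $Z_\fT$. Finally I would verify that on each component $Z_{\fT,\bullet,\alpha}$ sitting over $\M_\alpha\subset\M^{\C^*}$, the accumulated shifts add up to exactly the real dimension $\dim_\C \M + \dim_\C \M_\alpha$, which is the value of ``$\topdeg$'' on that component (since half-dimensional components of $Z_\fT$ in $\M\times\M_\alpha$ have complex dimension $(\dim_\C\M+\dim_\C\M_\alpha)/2$ by \propref{prop:irr}). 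Summing over $\alpha$ then yields $H_\topdeg(Z_\fT)$ on the nose.

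The main potential obstacle is exactly the bookkeeping in that last step: the shifts in $\cC_\M$, $\cC_{\M^{\C^*}}$, and in the identification $q_2^!\C_{\M^{\C^*}} = \omega_{Z_\fT}[-2\dim_\C\M^{\C^*}]$ must all conspire correctly, and one has to use that $\cC_{\M^{\C^*}}$ is the direct sum $\bigoplus_\alpha \cC_{\M_\alpha}$ with the shift varying component-by-component. Everything else is formal: proper base change, dual base change under properness, and adjunction. Naturality of the isomorphism follows because each step in the chain is natural.
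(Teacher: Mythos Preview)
Your argument is correct and is exactly the approach the paper takes: the paper simply says the proof is ``the same as \cite[Lemma~8.6.1]{CG}, once we use the base change $i^*\pi_!\cC_\M = \pi_{\fT!}i^{\prime *}\cC_\M$,'' and what you have written is precisely an unpacking of that Chriss--Ginzburg computation. The only cosmetic point is that the adjunction you invoke at the end is $(q_1^*,q_{1*})$, not $(q_1^*,q_{1!})$; you are implicitly using $q_{1!}=q_{1*}$ (properness of $q_1$), which you already noted, so nothing is missing.
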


The proof is exactly the same as \cite[Lemma~8.6.1]{CG}, once we use
the base change $i^*\pi_!\cC_{\M} = \pi_{\fT!}i^{\prime *}\cC_\M$.

This isomorphism is compatible with the convolution operator
\eqref{eq:op} in the following way:
Let $i_x$ denote the inclusion $\{x\}\to \M_0^{\C^*}$. Then an element
$c$ in 
\(
   \Hom_{D(\M_0^{\C^*})}(p_! i^* \pi_! \cC_{\M},
   \pi^{\C^*}_{!}\cC_{\M^{\C^*}})
   \cong
   \Hom_{D(\M_0^{\C^*})}(\pi^{\C^*}_{*}\cC_{\M^{\C^*}},
   p_* i^! \pi_* \cC_{\M})
\)
defines an operator
\begin{equation}\label{eq:op2}
  H^{p}(i_x^! \pi^{\C^*}_*\cC_{\M^{\C^*}}) \to
  H^{p}(i_x^!p_* i^! \pi_*\cC_{\M})
  \begin{NB}
  H^{-p}(i_x^*p_! i^* \pi_!\cC_{\M}) \to
  H^{-p}(i_x^* \pi^{\C^*}_!\cC_{\M^{\C^*}})
  \end{NB}
\end{equation}
by the Yoneda product. (See \cite[8.6.13]{CG}.)
We have
\begin{equation*}
  H^{p}(i_x^!p_* i^! \pi_*\cC_{\M}) \cong
  H^{p}(i_x^! (p\circ \pi_{\fT})_* i^{\prime !}\cC_\M)
  \cong
  H^{p}((p\circ \pi_{\fT})_* i_x^{\prime !} \cC_\M),
\end{equation*}
\begin{NB}
\begin{equation*}
  H^{-p}(i_x^*p_! i^* \pi_!\cC_{\M}) \cong
  H^{-p}(i_x^* (p\circ \pi_{\fT})_! i^{\prime *}\cC_\M)
  \cong
  H^{-p}((p\circ \pi_{\fT})_! i_x^{\prime*} \cC_\M),
\end{equation*}
\end{NB}%
where $i_x^{\prime}$ is the inclusion of $\fT_x$ in $\M$. The last one
is nothing but $H_{\topdeg-p}(\fT_x)$. Similarly
$H^{p}(i_x^! \pi^{\C^*}_*\cC_{\M^{\C^*}})$ is naturally isomorphic to
$H_{\topdeg-p}(\M^{\C^*}_x)$. Then we have
\begin{Lemma}
  Under the isomorphism in \lemref{lem:Yoneda}, the operator
  \eqref{eq:op2} given by $c\in H_{\topdeg}(Z_\fT)$ is equal to one in
  \eqref{eq:op}.
\end{Lemma}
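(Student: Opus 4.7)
The plan is to adapt the argument of Chriss--Ginzburg \cite[\S8.6]{CG} (in particular the proof of \cite[8.6.13]{CG}), which establishes the analogous compatibility between convolution on $H_*(Z)$ and the Yoneda product on $\Ext^\bullet_{D(\M_0)}(\pi_!\cC_\M,\pi_!\cC_\M)$. The point is that both the convolution operator \eqref{eq:op} and the Yoneda operator \eqref{eq:op2} are, after unwinding, built from the same representation of $c$ as a cohomology class on $Z_\fT\subset\M\times\M^{\C^*}$, so the task is to check that the isomorphism of \lemref{lem:Yoneda} intertwines them.

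First I would make the isomorphism in \lemref{lem:Yoneda} explicit by the standard recipe. Proper base change applied to the square formed by $\pi$, $\pi_\fT$, $i$, $i'$ gives $i^*\pi_!\cC_\M\cong\pi_{\fT!}i^{\prime*}\cC_\M$, so $p_!i^*\pi_!\cC_\M\cong(p\circ\pi_\fT)_!i^{\prime*}\cC_\M$. After adjunction and Verdier duality, a Hom from this complex into $\pi^{\C^*}_!\cC_{\M^{\C^*}}$ becomes a Borel--Moore homology class on $\M\times\M^{\C^*}$ supported on $Z_\fT$, and the top-degree piece corresponds to $H_\topdeg(Z_\fT)$ once the Thom shift attached to the vector bundle structure $\fT_\alpha\to\M_\alpha$ (already invoked in \eqref{eq:short1}) is incorporated.

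Next I would compute the Yoneda action \eqref{eq:op2} through this description. For $x\in\M_0^{\C^*}$ and the inclusion $i_x\colon\{x\}\to\M_0^{\C^*}$, the identification $H^p(i_x^!\pi^{\C^*}_*\cC_{\M^{\C^*}})\cong H_{\topdeg-p}(\M^{\C^*}_x)$, and its counterpart for $\fT_x$ already written out just before the lemma, come from proper base change along the fiber square over $x$. Tracing $c\circ\bullet$ through these isomorphisms, the Yoneda product becomes pullback along the second projection $p_2\colon\M\times\M^{\C^*}\to\M^{\C^*}$ to $\M^{\C^*}_x$, cap product with the restriction of $c$ to $p_2^{-1}(\M^{\C^*}_x)\cap Z_\fT$, and pushforward along the proper first projection $p_1\colon Z_\fT\to\M$ to $\fT_x$. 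This is exactly the convolution formula \eqref{eq:op}.

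The main obstacle will be bookkeeping. I will need to match Verdier duality, base change and Thom isomorphisms across the two different `$\topdeg$' conventions (one on $\M^{\C^*}_x$ and one on $\fT_x$, differing by the codimension of the fixed locus), and I must check that the isomorphism of \lemref{lem:Yoneda} used here is genuinely the one induced by cap product with the fundamental class of $Z_\fT$, rather than a twist of it coming from an unintended choice of adjunction. Once these compatibilities are aligned, the equality of the two operators is a formal consequence of the projection formula and functoriality of the base change maps, exactly as in \cite[proof of 8.6.13]{CG}.
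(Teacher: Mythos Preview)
Your proposal is correct and follows exactly the approach the paper takes: the paper's proof consists of the single sentence ``The proof is the same as in \cite[\S8.6]{CG},'' and your sketch is precisely an unpacking of that reference (the compatibility of convolution with Yoneda product via base change, adjunction, and Verdier duality as in \cite[8.6.13]{CG}). The bookkeeping concerns you flag---matching the two `$\topdeg$' conventions through the Thom shift and verifying that the isomorphism of \lemref{lem:Yoneda} is the standard one---are exactly the points one must check, but they are routine and handled by the same mechanisms as in \cite{CG}.
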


The proof is the same as in \cite[\S8.6]{CG}.

The conditions \eqref{eq:cond} on $c\in H_{\topdeg}(Z_{\fT})$ is
translated into a language for the right hand side. We have the
following equivalent to the condition \eqref{eq:cond}:
\begin{subequations}\label{eq:cond2}
  \begin{align}
    & \text{$c$ maps
      $(p\circ\pi_{\le\alpha})_*\cC_{\fT_{\le\alpha}}$ to
      $\bigoplus_{\beta\le\alpha}\pi^{\C^*}_*\cC_{\M_\beta}$},
\\
      & \text{$c\colon (p\circ\pi_{\le\alpha})_*\cC_{\fT_{\le\alpha}}/
      (p\circ\pi_{<\alpha})_*\cC_{\fT_{<\alpha}}\to
      \pi^{\C^*}_*\cC_{\M_\alpha}$ is the identity.}
  \end{align}
\end{subequations}
Here the identity means the natural homomorphism given by
\eqref{eq:short3}.

Thus $c$ satisfying \eqref{eq:cond2} gives a splitting of
\eqref{eq:short} and hence an isomorphism
$p_!i^*\pi_!\cC_\M\cong \pi^{\C^*}_!\cC_{\M^{\C^*}}$. 
Therefore we have a graded algebra homomorphism
\begin{equation}\label{eq:Ext}
\begin{split}
  \Ext^\bullet_{D(\M_0)}(\pi_!\cC_\M,\pi_!\cC_\M)
  \xrightarrow{p_!i^*}
  &\Ext^\bullet_{D(\M_0^{\C^*})}(p_!i^*\pi_!\cC_\M,p_!i^* \pi_!\cC_\M)
\\
  &\xrightarrow[\cong]{\operatorname{Ad}(c)}
  \Ext^\bullet_{D(\M_0^{\C^*})}(\pi^{\C^*}_!\cC_{\M^{\C^*}},
  \pi^{\C^*}_!\cC_{\M^{\C^*}}).
\end{split}
\end{equation}
It is compatible with \eqref{eq:op2}, i.e.,
\begin{equation*}
  \begin{CD}
    H^{p}(i_x^! \pi^{\C^*}_*\cC_{\M^{\C^*}}) @>c>> 
    H^{p}(i_x^!p_* i^! \pi_*\cC_{\M})
\\
    @VaVV @VV{\operatorname{Ad}(c)p_*i^!(a)}V    
\\
    H^{p}(i_x^! \pi^{\C^*}_*\cC_{\M^{\C^*}}) @>c>> 
    H^{p}(i_x^!p_* i^! \pi_*\cC_{\M})
  \end{CD}
\end{equation*}
is commutative.

For $Z_{\fT^-}$ we have the following:

\begin{Lemma}\label{lem:T-}
We have natural isomorphisms
  \begin{equation*}
    \begin{split}
      H_{\topdeg}(Z_{\fT^-}) &\cong
      \Hom_{D(\M_0^{\C^*})}(\pi_!^{\C^*}\cC_{\M^{\C^*}},
      p^-_* i^{-!}\pi_!\cC_{\M})\\
      &\cong \Hom_{D(\M_0^{\C^*})}(\pi_!^{\C^*}\cC_{\M^{\C^*}},
      p_! i^{*}\pi_!\cC_{\M}).
    \end{split}
  \end{equation*}
\end{Lemma}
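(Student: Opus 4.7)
The plan is to prove the two isomorphisms in sequence: the first by adapting the proof of \lemref{lem:Yoneda}, which is modeled on \cite[Lemma~8.6.1]{CG}, and the second by invoking Braden's hyperbolic localization theorem, which is already referenced implicitly in the discussion preceding \lemref{lem:perverse}.

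For the first isomorphism, the correspondence $Z_{\fT^-}\subset\M^{\C^*}\times\M$ has a proper projection onto the second factor (since $\pi^{\C^*}\colon\M^{\C^*}\to\M_0^{\C^*}$ is proper, being the restriction of the proper map $\pi$), while its projection onto the first factor is not proper. This is the source of the asymmetric appearance of lower-$!$ on $\pi^{\C^*}$ and lower-$*$ on $p^-$ in the target Hom. Let $j^-\colon\fT^-\hookrightarrow\M$ and $\pi^-\colon\fT^-\to\fT_0^-$ denote the base changes of $i^-$ and $\pi$ along the cartesian square. Using $\pi_!=\pi_*$ (as $\pi$ is proper) together with base change, one has
\[
  p^-_*\, i^{-!}\, \pi_!\, \cC_\M \;=\; (p^-\circ\pi^-)_*\, j^{-!}\, \cC_\M,
\]
and the identification of $\Hom_{D(\M_0^{\C^*})}\bigl(\pi^{\C^*}_!\cC_{\M^{\C^*}},\,(p^-\circ\pi^-)_* j^{-!}\cC_\M\bigr)$ with the top Borel--Moore homology $H_{\topdeg}(Z_{\fT^-})$ of the fiber product then proceeds exactly as in the proof of \lemref{lem:Yoneda}, i.e.\ via the computation of \cite[Lemma~8.6.1]{CG} with the $!$ and $*$ choices dictated by the propriety of each leg.

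For the second isomorphism, apply the hyperbolic localization theorem of Braden: since $\pi_!\cC_\M$ is naturally $\C^*$-equivariant (the one-parameter subgroup $\lambda$ acts on both $\M$ and $\M_0$ and $\pi$ is equivariant), there is a canonical isomorphism $p_!i^*\pi_!\cC_\M\cong p^-_*i^{-!}\pi_!\cC_\M$. Applying $\Hom_{D(\M_0^{\C^*})}(\pi^{\C^*}_!\cC_{\M^{\C^*}},-)$ converts the first isomorphism into the second.

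The main point requiring care is that ``top degree'' makes sense for $Z_{\fT^-}$. By the analog of \propref{prop:irr}(1), every irreducible component of $Z_{\fT^-}$ is at most half dimensional in $\M^{\C^*}\times\M$: each piece $Z_{\fT^-,\gamma,\beta}$ is a vector bundle over $Z_{\gamma,\beta}$ whose rank equals half the codimension of $\M_\beta$ in $\M$ (the only change from the $\fT$-case is that $\fT^-_\beta\to\M_\beta$ collects negative rather than positive weight subspaces, with the same rank). Once this is in place, $H_{\topdeg}(Z_{\fT^-})$ is spanned by fundamental classes of top-dimensional (lagrangian) components, and the Hom identification is unambiguous.
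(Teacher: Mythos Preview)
Your proposal is correct and follows essentially the same route as the paper. The paper's argument is terse: for the first isomorphism it says the proof is ``as in \lemref{lem:Yoneda}'' (i.e., \cite[Lemma~8.6.1]{CG}) after exchanging the two factors and replacing sheaves by their Verdier duals; for the second it cites Braden \cite{Braden} for the isomorphism $p^-_* i^{-!}\pi_!\cC_\M\cong p_! i^{*}\pi_!\cC_\M$. Your explicit base-change computation $(i^-)^!\pi_* = \pi^-_*(j^-)^!$ is exactly the Verdier-dual form of the base change used in \lemref{lem:Yoneda}, so the two presentations are equivalent; your added remark on why $H_{\topdeg}(Z_{\fT^-})$ is well defined is a helpful detail the paper leaves implicit.
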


The first isomorphism is one as in \lemref{lem:Yoneda}. We exchange
the first and second factors, as we have changed the order of factors
$\M^{\C^*}$ and $\M$ containing $Z_{\fT^-}$. The sheaves are replaced
by their Verdier dual. The second isomorphism is induced by
\begin{equation*}
  p^-_* i^{-!}\pi_!\cC_{\M}\cong  p_! i^{*}\pi_!\cC_{\M},
\end{equation*}
proved by Braden \cite{Braden} (see Theorem 1 and the equation (1) at
the end of \S3).
\begin{NB}
  Applying $i_x^*$, we get
  \begin{equation*}
    H^{-p}(i_x^*p^-_* i^{-!}\pi_!\cC_{\M})\cong 
    H^{-p}(p_! i_x^{*}\pi_!\cC_{\M}) \cong H_{\topdeg-p}(\fT_x)^\vee.
  \end{equation*}
  As we cannot use the base change for the left hand side, we do not
  know another description for the left hand side.
\end{NB}%

We now have
\begin{Theorem}\label{thm:coproduct}
  The coproduct $\Delta_c$ in \eqref{eq:Delta} is equal to
  \eqref{eq:Ext}. In particular, $\Delta_c$ is an algebra
  homomorphism.
\end{Theorem}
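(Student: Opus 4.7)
The plan is to translate each convolution in \eqref{eq:Delta} into a composition of morphisms in $D(\M_0^{\C^*})$ and then recognize the answer as \eqref{eq:Ext}. Under \lemref{lem:Yoneda} and \lemref{lem:T-}, the classes $c$ and $c^{-1}$ become morphisms
\begin{equation*}
  c\colon p_!i^*\pi_!\cC_\M \to \pi^{\C^*}_!\cC_{\M^{\C^*}},
  \qquad
  c^{-1}\colon \pi^{\C^*}_!\cC_{\M^{\C^*}} \to p_!i^*\pi_!\cC_\M,
\end{equation*}
and the conditions \eqref{eq:cond2} imply that $c$ realizes the splitting of the short exact sequences \eqref{eq:short}, hence is an isomorphism. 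Meanwhile, under the standard identification $H_*(Z) \cong \Ext^\bullet_{D(\M_0)}(\pi_!\cC_\M,\pi_!\cC_\M)$, an element $\alpha\in H_*(Z)$ becomes an endomorphism of $\pi_!\cC_\M$ on $\M_0$, and applying the functor $p_!i^*$ produces $p_!i^*(\alpha) \in \Ext^\bullet_{D(\M_0^{\C^*})}(p_!i^*\pi_!\cC_\M, p_!i^*\pi_!\cC_\M)$.

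The central step is to establish the identity
\begin{equation*}
  c^{-1} \ast \alpha \ast c \;=\; c \circ p_!i^*(\alpha) \circ c^{-1}
\end{equation*}
in $\Ext^\bullet_{D(\M_0^{\C^*})}(\pi^{\C^*}_!\cC_{\M^{\C^*}}, \pi^{\C^*}_!\cC_{\M^{\C^*}})$. This is a direct generalization of the compatibility between convolution and the Yoneda product from \cite[\S8.6]{CG}: by associativity of convolution the triple convolution reduces to iterated binary convolutions, and each binary step is converted into a composition of morphisms of complexes on $\M_0^{\C^*}$ by a base-change argument applied to the square relating $i$, $p$, $\pi$, $\pi_\fT$, $\pi^{\C^*}$ that already appears in the proof of \lemref{lem:Yoneda} and in the diagram preceding \eqref{eq:short}. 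Functoriality of $p_!i^*$, applied to $\alpha$ viewed as an arrow on $\M_0$, produces exactly the middle factor $p_!i^*(\alpha)$. Once this identity is in hand, the right-hand side is by construction $\operatorname{Ad}(c)\bigl(p_!i^*(\alpha)\bigr)$, which is the image of $\alpha$ under \eqref{eq:Ext}.

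From this identity the remaining assertions are immediate. Since $p_!i^*$ is a functor and $\operatorname{Ad}(c)$ respects Yoneda composition, $\Delta_c$ is an algebra homomorphism. Moreover, the identity forces $c^{-1}$ to be the two-sided inverse of the isomorphism $c$ in $D(\M_0^{\C^*})$, which yields the uniqueness promised in \remref{rem:inverse}: any other class serving as a left inverse to $c$ via convolution must coincide with $c^{-1}$ because a left inverse of an isomorphism in the derived category is unique. The main obstacle in the plan is the bookkeeping in the central identity: the three convolution factors live over different bases ($\M_0$ in the middle and $\M_0^{\C^*}$ on the outside), so careful use of proper base change, together with the Braden-type identification $p^-_*i^{-!}\cong p_!i^*$ employed in \lemref{lem:T-}, is needed in order for the functor $p_!i^*$ to emerge automatically in the middle of the right-hand side.
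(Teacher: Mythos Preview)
Your proposal is correct and follows essentially the same route as the paper: translate $c$, $c^{-1}$, and $\alpha$ into morphisms in $D(\M_0^{\C^*})$ via Lemmas~\ref{lem:Yoneda} and~\ref{lem:T-}, invoke the compatibility of convolution with Yoneda composition (as in \cite[\S8.6]{CG}) so that the triple convolution becomes $c\circ p_!i^*(\alpha)\circ c^{-1}$, and conclude that $\Delta_c$ agrees with \eqref{eq:Ext}.

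The one point of divergence is how you establish that the morphism $c^{-1}$ is the genuine two-sided inverse of $c$. You use that $c$ is already known to be an isomorphism (since \eqref{eq:cond2} makes it a splitting of \eqref{eq:short}), so once $c\circ c^{-1}=\id$ follows from $c^{-1}\ast c=[\Delta_{\M^{\C^*}}]$ and the compatibility, $c^{-1}$ is forced to be the inverse. The paper instead argues that $\pi^{\C^*}_!\cC_{\M^{\C^*}}$ and $p_!i^*\pi_!\cC_\M$ are semisimple perverse sheaves, so $c$ and $c^{-1}$ are linear maps between (finite-dimensional) multiplicity spaces of isotypic components, whence $c\circ c^{-1}=\id$ implies $c^{-1}\circ c=\id$ by elementary linear algebra. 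Your argument is slightly more economical here, since it does not need to invoke the isotypic decomposition; the paper's argument, on the other hand, makes explicit the structure that is later used in \subsecref{subsec:desc} and \thmref{thm:tensor}.
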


\begin{proof}
  The isomorphisms in Lemmas~\ref{lem:Yoneda},\ref{lem:T-} are
  compatible with the product. Therefore, $c^{-1}\ast c =
  [\Delta_{\M^{\C^*}}]$ means that the composite
  \begin{equation*}
    \pi_!^{\C^*}\cC_{\M^{\C^*}}\xrightarrow{c^{-1}}
      p_! i^{*}\pi_!\cC_{\M}\xrightarrow{c}
    \pi_!^{\C^*}\cC_{\M^{\C^*}}
  \end{equation*}
  is the identity. (Note that the order of $c$, $c^{-1}$ is swapped as
  we need to consider the transpose of homomorphisms for convolution.)

  As $\pi_!^{\C^*}\cC_{\M^{\C^*}}$ and $p_! i^{*}\pi_!\cC_{\M}$ are
  semisimple, $c$, $c^{-1}$ can be considered as linear maps between
  isotypic components. (See \subsecref{subsec:desc} for explicit
  descriptions of isotypic components.) Therefore $c\circ c^{-1} =
  \id$ implies $c^{-1}\circ c = \id$ also. This, in particular, shows
  the uniqueness of $c^{-1}$ mentioned in
  \remref{rem:inverse}. Moreover this $c^{-1}$ is the inverse of $c$
  used in \eqref{eq:Ext}. Therefore $\Delta_c$ coincides with
  \eqref{eq:Ext} again thanks to the compatibility between the
  convolution and Yoneda products.
\end{proof}

\subsection{Coassociativity}

Since $\Delta_c$ depends on the choice of the class $c$, the
coassociativity does not hold in general. We give a sufficient
condition on $c$ (in fact, various $c$'s) to have the coassociativity
in this subsection.

Let $W = W^1\oplus W^2 \oplus W^3$ be a decomposition of the
$I$-graded vector space. Let $\bw = \bw^1+\bw^2+\bw^3$ be the
corresponding dimension vectors. Setting $W^{23} = W^2\oplus W^3$, we
have a flag $W^3\subset W^{23}\subset W$ with $W^3/W^{23} \cong W^2$,
$W/W^{23} = W^1$. This gives us a preferred order among factors
generalizing to $W^2\subset W$ in the previous setting.

The two dimensional torus $T = \C^*\times\C^*$ acts on $\M =
\M(\bv,\bw)$ through the homomorphism $\lambda\colon T\to G_W$ defined
by $\lambda(t_2,t_3) = \id_{W^1}\oplus t_2 \id_{W^2} \oplus t_3
\id_{W^3}$. 

We have two ways of putting braces for the sum $\bw =
(\bw^1+\bw^2)+\bw^3 = \bw^1 + (\bw^2 + \bw^3)$ respecting the order.
We have corresponding two $\C^*$'s in $T$ given by $\{ (1,t_3)\}$ and
$\{ (t_2,t_2) \}$. We denote the former by $\C^*_{12,3}$ and the
latter by $\C^*_{1,23}$. We then consider fixed points varieties,
tensor product varieties, and fiber products for both $\C^*$'s. We
denote them by $\M^{12,3}$, $\fT^{12,3}$, $Z_{\fT^{12,3}}$,
$\M^{1,23}$, $\fT^{1,23}$, $Z_{\fT^{1,23}}$, etc. They correspond to
block matrices
\(\left[
\begin{smallmatrix}
  * & * & *
\\
  * & * & *
\\
  0 & 0 & *  
\end{smallmatrix}\right]
\)
and
\(\left[
\begin{smallmatrix}
  * & * & *
\\
  0 & * & *
\\
  0 & * & *  
\end{smallmatrix}\right]
\)
respectively.

On these varieties, we have the action of the remaining $\C^* =
T/\C^*_{12,3}$ and $T/\C^*_{1,23}$ respectively. Then we can consider
the fixed point sets $(\M^{12,3})^{\C^*}$, $(\M^{12,3})^{\C^*}$. Both
are nothing but the torus fixed points $\M^{T}$. We denote it by
$\M^{1,2,3}$. We denote the corresponding fiber product
by $Z_{1,2,3}$.
In $\fT^{12,3}_0$, $\fT^{1,23}_0$, we consider subvarieties consisting
of points $\lim_{t\to 0}$ exists as before. They can be described as
the variety consisting of points $x=[B,a,b]$ such that $b_{\vin{h_N}}
B_{h_N}B_{h_{N-1}}\cdots B_{h_1}a_{\vout{h_1}}$ preserves the flag
$W^3\subset W^{23}\subset W$, i.e., \(\left[
\begin{smallmatrix}
  * & * & *
\\
  0 & * & *
\\
  0 & 0 & *  
\end{smallmatrix}\right].
\)
In particular, the variety is the same for one defined in
$\fT^{12,3}_0$ and in $\fT^{1,23}_0$. Therefore it is safe to write
both by $\fT^{1,2,3}_0$. We have the corresponding fiber product
\(
  Z_{\fT^{1,2,3}}\defeq \fT^{1,2,3}\times_{\M^{1,2,3}_0} \M^{1,2,3}.
\)

We need two more classes of varieties corresponding to
\(\left[
\begin{smallmatrix}
  * & * & 0
\\
  0 & * & 0
\\
  0 & 0 & *  
\end{smallmatrix}\right]
\)
and
\(\left[
\begin{smallmatrix}
  * & 0 & 0
\\
  0 & * & *
\\
  0 & 0 & *  
\end{smallmatrix}\right]
\)
respectively. Tensor product varieties are
\begin{equation*}
  \fT_0^{(1,2),3}\defeq \fT^{1,2,3}_0\cap \M^{12,3}_0,
\quad
  \fT_0^{1,(2,3)}\defeq \fT^{1,2,3}_0\cap \M^{1,23}_0
\end{equation*}
respectively.
We define the fiber products
$Z_{\fT^{(1,2),3}} = \fT^{(1,2),3}\times_{\M_0^{1,2,3}}\M^{1,2,3}$,
$Z_{\fT^{1,(2,3)}} = \fT^{1,(2,3)}\times_{\M_0^{1,2,3}}\M^{1,2,3}$.

A class $c^{12,3}\in H_{\topdeg}(Z_{\fT^{12,3}})$ gives the coproduct
\begin{equation*}
  \Delta_{c^{12,3}}\colon H_*(Z)\to H_*(Z_{12,3}),
\end{equation*}
and similarly $c^{1,23}\in
H_{\topdeg}(Z_{\fT^{12,3}})$ gives $\Delta_{c^{1,23}}$.
These correspond to $\Delta\otimes 1$ and $1\otimes \Delta$ for the
usual coproduct respectively.

A class $c^{(1,2),3}\in H_{\topdeg}(Z_{\fT^{(1,2),3}})$ gives
\begin{equation*}
  \Delta_{c^{(1,2),3}}\colon H_*(Z_{12,3})\to H_*(Z_{1,2,3}),
\end{equation*}
and similarly $c^{1,(2,3)}\in H_{\topdeg}(Z_{\fT^{(1,2),3}})$ gives
$\Delta_{c^{1,(2,3)}}$. Thus we have two ways going from $H_*(Z)$ to
$H_*(Z_{1,2,3})$:
\begin{equation}\label{eq:coass}
  \begin{CD}
    H_*(Z) @>\Delta_{c^{12,3}}>> H_*(Z_{12,3})
\\
    @V\Delta_{c^{1,23}}VV @VV\Delta_{c^{(1,2),3}}V
\\
   H_*(Z_{1,23}) @>>\Delta_{c^{1,(2,3)}}> H_*(Z_{1,2,3})   
  \end{CD}
\end{equation}
The commutativity of this diagram means the coassociativity of our
coproduct.

\begin{Proposition}\label{prop:coass}
  The diagram \eqref{eq:coass} is commutative if
  \begin{equation*}
    c^{12,3} \ast c^{(1,2),3} = c^{1,(2,3)}\ast c^{1,23}
  \end{equation*}
holds in $H_{\topdeg}(Z_{\fT^{1,2,3}})$.
\end{Proposition}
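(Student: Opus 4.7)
The plan is to expand both composites in \eqref{eq:coass} via the associativity of the convolution product, and then to use the uniqueness of the convolution inverse (established in \thmref{thm:coproduct}) to reduce the commutativity to the assumed equality.

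First I would unfold the definition \eqref{eq:Delta}. For any $a\in H_*(Z)$, associativity of convolution yields
\begin{equation*}
  \Delta_{c^{(1,2),3}}\circ\Delta_{c^{12,3}}(a)
  = \bigl((c^{(1,2),3})^{-1}\ast (c^{12,3})^{-1}\bigr)\ast a
  \ast \bigl(c^{12,3}\ast c^{(1,2),3}\bigr),
\end{equation*}
and the analogous expression for $\Delta_{c^{1,(2,3)}}\circ\Delta_{c^{1,23}}(a)$ built out of $c^{1,23}$ and $c^{1,(2,3)}$. The hypothesis identifies the right-hand factors of these two expressions; denote their common value by $C\in H_{\topdeg}(Z_{\fT^{1,2,3}})$.

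Next I would show that each of the two candidates for the left-hand factor, namely $(c^{(1,2),3})^{-1}\ast (c^{12,3})^{-1}$ and $(c^{1,(2,3)})^{-1}\ast (c^{1,23})^{-1}$, is the unique convolution inverse of $C$. To do so I plan to verify that $C$ itself satisfies the natural analog of conditions \eqref{eq:cond} relative to the $T$-fixed-point decomposition of $\M$, where the order on the components of $\M^{1,2,3}$ is chosen to simultaneously refine both the $\C^*_{12,3}$- and the $\C^*_{1,23}$-orders. Granted this, \propref{prop:inverse} applied in the extended setting produces a class $C^{-1}$ with $C^{-1}\ast C = [\Delta_{\M^{1,2,3}}]$, and the uniqueness of the left inverse established in the proof of \thmref{thm:coproduct} forces both candidates to coincide with $C^{-1}$.

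Both composites in \eqref{eq:coass} therefore collapse to the single conjugation $a\mapsto C^{-1}\ast a\ast C$, proving commutativity. The main obstacle will be the verification that convolution preserves the upper-triangular and diagonal-normalization structure of \eqref{eq:cond}: one must carefully track how the partial orders on the fixed-point components of $\M^{12,3}$, $\M^{1,23}$, and $\M^{1,2,3}$ interact under iterated hyperbolic localization, and confirm that each intermediate convolution is supported on the expected subvariety of the appropriate fiber product. Once this bookkeeping is in place, the remainder of the argument is formal.
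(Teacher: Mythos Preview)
Your argument is correct and matches what the paper has in mind when it writes that the proof is ``obvious'': both routes expand the two composites as iterated convolutions and reduce the commutativity of \eqref{eq:coass} to the equality of the conjugating classes together with the equality of their inverses.

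One simplification is available. The detour through re-verifying the analog of conditions \eqref{eq:cond} for the composite $C$ is not needed. By \thmref{thm:coproduct}, each individual $c^{?}$ already corresponds to an honest isomorphism of semisimple perverse sheaves, with $(c^{?})^{-1}$ its genuine two-sided inverse. Hence $C$, being a composite of such isomorphisms, is itself an isomorphism and has a unique inverse. Associativity and the individual inverse relations give directly that both $(c^{(1,2),3})^{-1}\ast (c^{12,3})^{-1}$ and $(c^{1,(2,3)})^{-1}\ast (c^{1,23})^{-1}$ are left inverses of $C$; uniqueness forces them to agree. Thus the order-compatibility bookkeeping you flag in your final paragraph can be bypassed entirely, and no separate verification of \eqref{eq:cond} for $C$ (nor a fresh appeal to \propref{prop:inverse}) is required.
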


The proof is obvious.

\subsection{Equivariant homology version}\label{subsec:equivariant}

Let $G = \prod_i \GL(W^1_i)\times \GL(W^2_i)$. The group $G$ acts on
$\M$, $\M^{\C^*}$ and various other varieties considered in the
previous subsections.

We consider a $\C^*\times\C^*$-action on $\M$ defined by
\begin{equation*}
  (t_1,t_2)\cdot B_h =
  \begin{cases}
    t_1 B_h & \text{if $h\in \Omega$},\\
    t_2 B_h & \text{if $h\in \overline\Omega$},
  \end{cases}
\quad
    (t_1,t_2)\cdot a = a,
\quad
    (t_1,t_2)\cdot b = t_1 t_2 b.
\end{equation*}
Let $\mathbb G = \C^*\times\C^*\times G$.

\begin{Remark}
  When the graph does not contain a cycle, the action of a factor
  $\C^*$ of $\C^*\times\C^*$, lifted to the double cover, can be move
  to an action through $\C^*\to G$. Therefore we only have an action
  of $\C^*\times G$ essentially in this case.
  \begin{NB}
    See 2012-04-29 Note.
  \end{NB}
\end{Remark}

The results in the previous subsections hold in the equivariant
category: we replace the homology $H_*(X)$ by the equivariant homology
$H_*^{\mathbb G}(X)$. For the derived category $D(X)$ of complexes of
constructible sheaves, we use their equivariant version $D_{\mathbb
  G}(X)$, considered in \cite{BL,Lu-cus2}.
\begin{NB}
  When we consider the fibers $\fT_x$, $\M_x$, $\M^{\C^*}_x$, we need
  to replace $\mathbb G$ by a closed subgroup $A$ such that $x$ is
  fixed by $A$. But we do not need to consider fibers in the proof of
  \thmref{thm:coproduct}, which is most important.
\end{NB}%

The following observations are obvious, but useful. Top degree
components of $Z$ give a base for both $H_\topdeg(Z)$ and
$H_\topdeg^{\mathbb G}(Z)$. Therefore we have a natural isomorphism
\begin{equation*}
  H_\topdeg(Z_{\fT}) \cong H_\topdeg^{\mathbb G}(Z_{\fT}).
\end{equation*}
The corresponding statement for the right hand side of
\lemref{lem:Yoneda} is
\begin{equation*}
    \Hom_{D(\M_0^{\C^*})}(p_! i^* \pi_! \cC_{\M},
    \pi^{\C^*}_{!} \cC_{\M^{\C^*}})
    \cong
    \Hom_{D_{\mathbb G}(\M_0^{\C^*})}(p_! i^* \pi_! \cC_{\M},
    \pi^{\C^*}_{!} \cC_{\M^{\C^*}}).
\end{equation*}
This is also true as $p_! i^* \pi_! \cC_{\M}$, $\pi^{\C^*}_{!}
\cC_{\M^{\C^*}}$ are $\mathbb G$-equivariant perverse sheaves.
(See \cite[1.16(a)]{Lu-cus2}.)

In particular, $c\in H_{\topdeg}(Z_{\fT})$ defines the coproduct
$\Delta_c$ for the equivariant version $\Delta_c\colon H^{\mathbb
  G}_*(Z)\to H^{\mathbb G}_*(Z^{\C^*})$. Also to check the
coassociativity of the coproduct, we only need to check the condition
in \propref{prop:coass} for the {\it non\/}-equivariant homology.

\begin{Remark}
  In a wider framework of a holomorphic symplectic manifold with torus
  action satisfying certain conditions, Maulik and Okounkov \cite{MO}
  give a `canonical' element $c$. It is called the {\it stable
    envelop}. 
  It is defined first on the analog of $Z_{\fT}$ for the quiver
  varieties with generic complex parameters (deformations of $\M$,
  $\M^{\C^*}$), and then as the limit when parameters go to $0$.
  It satisfies \eqref{eq:cond} and the condition in
  \propref{prop:coass}. Therefore their stable envelop together with
  the construction in this section gives a canonical coproduct,
  satisfying the coassociativity.
\end{Remark}

\section{Tensor product multiplicities}

In this section, we give the formula of tensor product multiplicities
with respect to the coproduct $\Delta_c$ in terms of $IC$ sheaves.

\subsection{Decomposition of the direct image sheaf}

We give the decomposition of $\pi_!(\cC_\M)$ in this subsection. For
this purpose, we introduce a refinement of the stratification
\eqref{eq:decomp2}. We do not need to worry about the first factor
$\Mreg_0(\bv^0,\bw)$ as it cannot be decomposed further. On the other
hand the second factor $\M_0(\bv-\bv^0,0)$ parametrizes isomorphism
classes of semisimple modules $M$ of the preprojective algebra
corresponding to the quiver. They decompose into direct sum of simple
modules as
\begin{equation*}
  M = M_1^{\oplus n_1} \oplus M_2^{\oplus n_2} \oplus \cdots
  \oplus M_N^{\oplus n_N}.
\end{equation*}
Dimension vectors of all simple modules have been classified by
Crawley-Boevey \cite[Th.~1.2]{CB}. (In fact, he also classifies pairs
$(\bv^0,\bw)$ with $\Mreg_0(\bv^0,\bw)\neq\emptyset$.)
Let $\delta_1$, $\delta_2$, \dots, $\delta_N$ be such vectors which
are $\le\bv$.
They are all positive roots satisfying certain conditions. For
example, for a quiver of type $ADE$, they are simple roots. For a
quiver of affine type $ADE$, they are simple roots and the positive
generator $\delta$ of imaginary roots. For a Jordan quiver, it is
the vector $1\in\Z = \Z^I$.

We then have
\begin{equation*}
  \M_0(\bv-\bv^0,0) =
  S^{n_1}\Mreg_0(\delta_1,0)\times S^{n_2}\Mreg_0(\delta_2,0)\times\cdots
  \times S^{n_N}\Mreg_0(\delta_N,0),
\end{equation*}
with $\bv^0 + n_1\delta_1 + \cdots + n_N\delta_N = \bv$. Here
$\Mreg_0(\delta_k,0)$ parametrizes simple modules with dimension
vector $\delta_k$, or equivalently points in $\M_0(\delta_k,0)$ whose
stabilizers are nonzero scalars times the identity.
Its symmetric power $S^{n_k}\Mreg_0(\delta_k,0)$ parametrizes
semisimple modules
\begin{equation*}
  M_1^{\oplus m_1} \oplus M_2^{\oplus m_2} \oplus \cdots
\end{equation*}
such that $M_1$, $M_2$, \dots are distinct simple modules with
dimension $\delta_k$ and the total number of simple factors is $n_k$.

The symmetric power $S^{n_k}\Mreg_0(\delta_k,0)$ decomposes further
according to multiplicities $m_1$, $m_2$, \dots. As we may assume
$m_1\ge m_2 \ge\dots$, they define partition $\lambda_k$ of $n_k$. Let
us denote by $S_{\lambda_k}\Mreg_0(\delta_k,0)$ the space
parametrizing semisimple modules having multiplicities $\lambda_k$.

Thus we have
\begin{equation}\label{eq:strat}
  \M_0 = \bigsqcup
  \Mreg_0(\bv^0,\bw)\times \M_0(\vec{\lambda})
\end{equation}
with $\bv^0 + |\lambda_1|\delta_1 + \cdots + |\lambda_N|\delta_N = \bv$,
where
\begin{equation*}
  \M_0(\vec{\lambda}) \defeq
S_{\lambda_1} \Mreg_0(\delta_1,0)\times S_{\lambda_2}\Mreg_0(\delta_2,0)
   \times\cdots
   \times S_{\lambda_N}\Mreg_0(\delta_N,0).
\end{equation*}
This is nothing but the decomposition given in \cite[6.5]{Na-quiver},
\cite[3.27]{Na-alg}.

This stratification has a simple form when the quiver is of type
$ADE$. Each $\delta_k$ is a simple root $\alpha_i$, and
$\Mreg_0(\delta_k,0)$ is a one point given by the simple module
$S_i$. The symmetric product $S^{n_k}\Mreg_0(\delta_k,0)$ is also a
one point $S_i^{\oplus n_k}$, and hence we do not need to consider the
partition $\lambda_k$. Thus we can safely forget factors
$S_{\lambda_k}\Mreg_0(\delta_k,0)$ and get
\begin{equation*}
  \M_0 = \bigsqcup \Mreg_0(\bv^0,\bw),
\end{equation*}
with $\bv^0\le\bv$.

For the affine case $\delta_k$ is either simple root or $\delta$, as
we mentioned above. If $\delta_k$ is a simple root, we can forget the
factor $S^{n_k}\Mreg_0(\delta_k,0)$ as in the $ADE$ cases. If $\delta_k
= \delta$, then $\Mreg_0(\delta,0)$ is $\C^2$ for the Jordan quiver or
$\C^2\setminus\{0\}/\Gamma$ for the affine quiver corresponding to a
finite subgroup $\Gamma\subset\SU(2)$ via the McKay correspondence.
Therefore we have
\begin{equation}
  \M_0 = \bigsqcup \Mreg_0(\bv^0,\bw)
  \times \left(S_{\lambda}\C^2\text{ or }
  S_\lambda(\C^2\setminus\{0\})/\Gamma\right).
\end{equation}

Return back to a general quiver. We denote each stratum in
\eqref{eq:strat} by $\M_0(\bv^0;\vec{\lambda})$ for
brevity. Here $\vec{\lambda} = (\lambda_1,\dots,\lambda_N)$.
For a simple local system $\rho$ on this stratum, we consider
the corresponding $IC$ sheaf
\begin{equation*}
  IC(\M_0(\bv^0;\vec{\lambda}),\rho).
\end{equation*}
Then the decomposition theorem for a semismall projective morphism
\cite{BM} implies a canonical direct sum decomposition
\begin{equation}\label{eq:decompBM}
  \pi_!\cC_\M \cong
  \bigoplus IC(\M_0(\bv^0;\vec{\lambda}),\rho)
  \otimes H_{\topdeg}(\M_{x_{\bv^0;\vec{\lambda}}})_\rho.
\end{equation}
Here $x_{\bv^0;\vec{\lambda}}$ is a point in the stratum
$\M_0(\bv^0;\vec{\lambda})$ and $\M_{x_{\bv^0;\vec{\lambda}}} =
\pi^{-1}(x_{\bv^0;\vec{\lambda}})$ as before.  Then
$H_{\topdeg}(\M_{x_{\bv^0;\vec{\lambda}}})_\rho$ denotes the isotypic
component of $\rho$ in the homology group
$H_{\topdeg}(\M_{x_{\bv^0;\vec{\lambda}}})$ of the fiber with respect
to the monodromy action.

This decomposition determines representations of the convolution algebra
$H_{\topdeg}(Z) = \End_{D(\M_0)}(\pi_!\cC_\M)$ (see \cite[\S8.9]{CG}):
\begin{Theorem}\label{thm:semisimplealgebra}
  \textup{(1)} $\{
  H_{\topdeg}(\M_{x_{\bv^0;\vec{\lambda}}})_\rho\}$ is the set of
  isomorphism classes of simple modules of $H_{\topdeg}(Z)$.

  \textup{(2)} We have
  \begin{equation*}
    H_{\topdeg}(Z) \cong 
    \bigoplus \End(H_{\topdeg}(\M_{x_{\bv^0;\vec{\lambda}}})_\rho).
  \end{equation*}
\end{Theorem}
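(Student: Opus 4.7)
The plan is to reduce the theorem to the standard formal consequence of the decomposition \eqref{eq:decompBM} via the Chriss--Ginzburg isomorphism between the convolution algebra and the endomorphism algebra of the pushforward complex. The main ingredient is the identification
\[
H_{\topdeg}(Z) \;\cong\; \End_{D(\M_0)}(\pi_!\cC_\M),
\]
established in \cite[\S8.9]{CG}; the semismallness of $\pi$ onto its image (already invoked before Proposition~\ref{prop:irr}(1)) is what guarantees that the top-degree part of $H_*(Z)$ is indeed a subalgebra and matches the degree zero part of the $\Ext$-algebra, so only $\Hom$'s (not higher $\Ext$'s) enter.

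With this identification in hand, I would apply Schur's lemma for simple perverse sheaves. Each summand appearing in \eqref{eq:decompBM} is of the form $L_{\bv^0;\vec{\lambda},\rho}\otimes V_{\bv^0;\vec{\lambda},\rho}$, where $L_{\bv^0;\vec{\lambda},\rho}=IC(\M_0(\bv^0;\vec{\lambda}),\rho)$ are pairwise non-isomorphic simple perverse sheaves on $\M_0$ and $V_{\bv^0;\vec{\lambda},\rho} = H_{\topdeg}(\M_{x_{\bv^0;\vec{\lambda}}})_\rho$ is the multiplicity space. Since $\Hom_{D(\M_0)}(L,L') = \delta_{L,L'}\C$ for simple perverse sheaves, taking endomorphisms of the direct sum yields
\[
\End_{D(\M_0)}(\pi_!\cC_\M) \;\cong\; \bigoplus_{\bv^0,\vec{\lambda},\rho} \End_\C\bigl(V_{\bv^0;\vec{\lambda},\rho}\bigr),
\]
which is exactly part (2).

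Part (1) then follows formally: a finite direct sum of matrix algebras is semisimple, its simple modules are precisely the standard matrix modules, and each simple module occurs as the multiplicity space $V_{\bv^0;\vec{\lambda},\rho}$ viewed as a module via the evident projection onto the corresponding summand. Non-isomorphism of these modules follows from the fact that distinct $(\bv^0,\vec{\lambda},\rho)$ index distinct simple summands of $\pi_!\cC_\M$, which act by zero on the other factors.

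The one subtle point, and the place I would be most careful, is bookkeeping of the degree convention. The grading `$\topdeg$' in $H_{\topdeg}(\M_{x_{\bv^0;\vec{\lambda}}})$ must be the one that matches the bottom degree of $i_{x_{\bv^0;\vec{\lambda}}}^! L_{\bv^0;\vec{\lambda},\rho}$, i.e.\ the one arising from the stalk description
\(
H_{\topdeg}(\M_{x}) \cong \bigoplus_\rho H^{0}(i_{x}^! \pi_!\cC_{\M})_\rho,
\)
after accounting for the dimension shift built into $\cC_\M$ and the codimension of the stratum containing $x$. This is exactly the convention fixed in the paragraph preceding the theorem, and it is consistent with the one used in the convolution-algebra-module pairing defined earlier in the paper. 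Once this is aligned, the argument is a verbatim application of the CG framework, and no further computation is needed.
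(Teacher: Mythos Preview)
Your proposal is correct and is exactly the argument the paper has in mind: the paper does not spell out a proof but simply cites \cite[\S8.9]{CG} together with the decomposition \eqref{eq:decompBM}, which is precisely the Chriss--Ginzburg framework (convolution algebra $\cong$ $\End$ of the pushforward, then Schur's lemma on simple perverse summands) that you have written out. Your attention to the degree convention is appropriate and matches the paper's conventions.
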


When the quiver is of type $ADE$, it was proved that only trivial
local systems on strata appear \cite[\S15]{Na-qaff} in the direct
summand of $\pi_!\cC_\M$, and hence we have
\begin{equation*}
  \pi_!(\cC_\M) \cong \bigoplus IC(\M_0(\bv^0,\bw))\otimes
  H_{\topdeg}(\M_{x_{\bv^0}}),
\end{equation*}
where we remove the local system $\rho$ from the notation for the $IC$
sheaves.

For a quiver of general type, the argument used in
\cite[\S15]{Na-qaff} implies that the simple local system $\rho$ is
trivial on the factor $\Mreg_0(\bv^0,\bw)$,
\begin{NB}
and hence comes only from
\(
  \M_0(\vec{\lambda}) = 
\)
i.e.,
\begin{equation*}
 IC(\M_0(\bv^0;\vec{\lambda}),\rho) 
 = IC(\M_0(\bv^0,\bw))\boxtimes
 IC(\M_0(\vec{\lambda}), \rho).
\end{equation*}
\end{NB}%
i.e., all simple modules $M_1$, $M_2$, \dots are of the form $S_i$.
In general, the author does not know what kind of local system $\rho$
can appear on these factors.
But we can show that only trivial local system appears for an affine
quiver:

\begin{Lemma}
  Suppose that the quiver is of affine type. Then
  \begin{equation*}
    \pi_!(\cC_\M) \cong \bigoplus_{\bv^0,\lambda}
    IC(\M_0(\bv^0,\bw))\boxtimes 
    \left(
      \cC_{\overline{S_{\lambda}(\C^2)}}
      \text{ or }
      \cC_{\overline{S_{\lambda}(\C^2/\Gamma)}}
      \right)
    \otimes H_{\topdeg}(\M_{x_{\bv^0;\lambda}}).
  \end{equation*}
\end{Lemma}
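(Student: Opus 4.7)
The plan is to start from the decomposition \eqref{eq:decompBM} produced by the Borho--MacPherson theorem and then show that no nontrivial simple local system $\rho$ on any stratum $\M_0(\bv^0;\vec{\lambda})$ can contribute. By the remark just before the lemma (citing the argument of \cite[\S15]{Na-qaff}), $\rho$ is already known to be trivial on the factor $\Mreg_0(\bv^0,\bw)$, so it suffices to rule out nontrivial simple local systems on the second factor $\M_0(\vec{\lambda}) = \prod_k S_{\lambda_k}\Mreg_0(\delta_k,0)$. For an affine quiver, the Crawley--Boevey classification tells us each $\delta_k$ is either a simple root or the minimal positive imaginary root $\delta$: in the first case $\Mreg_0(\delta_k,0)$ is a point and nothing needs to be checked; in the second case $\Mreg_0(\delta,0)=\C^2$ (Jordan) or $(\C^2\setminus\{0\})/\Gamma$ (affine $ADE$), so the factor to analyze is $S_\lambda\C^2$ or $S_\lambda((\C^2\setminus\{0\})/\Gamma)$ for a partition $\lambda$.

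Next I would use a transverse slice argument to localize the decomposition theorem near the stratum. Picking a point $x_{\bv^0;\vec\lambda}$ and passing to a normal slice in $\M_0$, the semismall morphism $\pi$ becomes, up to a smooth factor, the product of the Hilbert--Chow type morphism associated with each $\delta_k$ with multiplicity $\lambda_k$. Thus the simple local system $\rho$ restricted to a slice of $\M_0(\vec\lambda)$ is a tensor product of local systems coming from the corresponding symmetric products. Because local systems and the decomposition theorem tensor well under products, the problem is reduced to the single factor: showing that for the Hilbert--Chow morphism
\[
\pi_n\colon \mathrm{Hilb}^n(X)\longrightarrow S^n(X),\qquad X=\C^2\text{ or }\C^2/\Gamma,
\]
every simple summand of $(\pi_n)_!\cC$ is of the form $\cC_{\overline{S_\lambda X}}$ with the trivial local system on the stratum $S_\lambda X$.

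The main obstacle is this last step. For $X=\C^2$ it follows from the Grojnowski/Nakajima--G\"ottsche description of $H^*(\mathrm{Hilb}^n(\C^2))$, more precisely from the identification of the stalk at a partition-$\lambda$ point of $S^n\C^2$ with the top cohomology of a product of punctual Hilbert schemes, on which the symmetric groups permuting equal parts act trivially. For $X=\C^2/\Gamma$ the corresponding statement reduces, via the resolution of the Kleinian singularity and the identification of $\mathrm{Hilb}^n$ with a quiver variety for the affine $ADE$ quiver, to the same analysis carried out on the minimal resolution together with an easy $\Gamma$-equivariant argument; here one can invoke the calculations of Kuznetsov and Nakajima--Yoshioka on framed sheaves on ALE spaces, whose top-degree fiber cohomology is known to carry trivial monodromy under the analogous symmetric group action.

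Putting these pieces together, the only $(\rho,\bv^0,\vec\lambda)$ contributing to \eqref{eq:decompBM} are those with $\rho$ the trivial local system, and every factor $S_{\lambda_k}\Mreg_0(\delta_k,0)$ with $\delta_k$ a simple root collapses to a point and can be dropped from the notation. What is left is precisely
\[
\pi_!\cC_\M \cong \bigoplus_{\bv^0,\lambda}
IC(\M_0(\bv^0,\bw))\boxtimes
\bigl(\cC_{\overline{S_\lambda\C^2}}\text{ or }\cC_{\overline{S_\lambda(\C^2/\Gamma)}}\bigr)
\otimes H_{\topdeg}(\M_{x_{\bv^0;\lambda}}),
\]
as claimed, where the multiplicity $H_{\topdeg}(\M_{x_{\bv^0;\lambda}})$ is computed using that on these strata only the trivial isotypic component survives.
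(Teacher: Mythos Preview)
Your overall strategy is sound and close to the paper's, but there is a concrete mis-identification at the key step. After passing to a slice (or, as the paper does, after reducing to $\bv^0=0$), the local model of $\pi$ is \emph{not} the Hilbert--Chow map $\mathrm{Hilb}^n(X)\to S^n(X)$. Via the description of affine-type quiver varieties as moduli of framed torsion-free sheaves, the fiber $\M_{x_{\bv^0;\lambda}}$ over a point $m_1 x_1+m_2 x_2+\cdots$ is a product of \emph{punctual Quot schemes} of rank $r=\langle\bw,c\rangle$ (quotients of $\shfO_{\C^2}^{\oplus r}$ of length $m_i$ supported at a single point), not punctual Hilbert schemes; the paper even flags that the earlier ``Hilbert scheme'' statement in \cite{MR1989196} was incorrect. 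So your appeal to the Grojnowski/Nakajima--G\"ottsche analysis of $\mathrm{Hilb}^n$ does not cover the general case $r>1$, and the Kuznetsov/Nakajima--Yoshioka references you invoke concern a different setup than what you need here.

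The paper's fix is short: the top-degree Borel--Moore homology of a punctual Quot scheme is one-dimensional (see \cite[Ex.~5.15]{Lecture}), so the monodromy of $H_{\topdeg}(\M_{x_{\bv^0;\lambda}})$ along the stratum is forced to be trivial, and only the trivial $\rho$ contributes. You are also missing the final step that turns the $IC$ on the symmetric-product factor into a shifted constant sheaf: the closures $\overline{S_\lambda(\C^2)}$ and $\overline{S_\lambda(\C^2/\Gamma)}$ have only finite quotient singularities, hence are rationally smooth, so $IC(\overline{S_\lambda X})=\cC_{\overline{S_\lambda X}}$. With those two corrections your outline becomes the paper's proof.
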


\begin{proof}
  By the argument in \cite[\S15]{Na-qaff}, it is enough to assume
  $\bv^0 = 0$ and hence $\Mreg_0(\bv^0,\bw)$ is a single point. Then a
  point in the stratum $x_{\bv^0;\lambda}$ is a point in
  $S_{\lambda}\C^2$ or $S_\lambda(\C^2\setminus\{0\})/\Gamma)$, and
  hence is written as $m_1 x_1 + m_2 x_2 + \cdots$, where $x_1$, $x_2$
  are distinct points in $\C^2$ or $\C^2\setminus\{0\})/\Gamma$. Then
  the fiber $\M_{x_{\bv^0;\lambda}}$ is the product of punctual
  Quot schemes parametrizing quotients $Q$ of the trivial rank $r$
  sheaf $\shfO_{\C^2}^{\oplus r}$ over $\C^2$ such that $Q$ is
  supported at $0$ and the length is $m_i$. Here $r$ is given by
  $\langle \bw, c\rangle$, where $c$ is the central element of the
  affine Lie algebra or $\bw$ itself for the Jordan quiver. This
  follows from the alternative description of quiver varieties of
  affine types, explained in \cite{MR1989196}. (Remark: In
  \cite[\S4]{MR1989196}, it was written that the fiber is the product
  of punctual Hilbert schemes, but it is wrong.) It is known that top
  degree part $H_{\topdeg}$ of a punctual Quot scheme is
  $1$-dimensional (see \cite[Ex.~5.15]{Lecture}). Therefore the
  monodromy action is trivial.
  Moreover $\overline{S_{\lambda}(\C^2)}$ and
  $\overline{S_{\lambda}(\C^2/\Gamma)}$ only have finite quotient
  singularities, and hence are rationally smooth. Therefore the
  intersection complexes are constant sheaves, shifted by dimensions.
\end{proof}

\subsection{A description of $H_\topdeg(Z_{\fT})$}\label{subsec:desc}

As in \thmref{thm:semisimplealgebra} we have a natural isomorphism
\begin{equation}\label{eq:decompTensor}
  H_\topdeg(Z_{\fT})
  \cong \bigoplus_{\bv^1,\bv^2,\vec{\lambda},\rho} \Hom(
  H_{\topdeg}(\M^{\C^*}_{x_{\bv^1,\bv^2;\vec{\lambda}}})_\rho,
  H_{\topdeg}(\fT_{x_{\bv^1,\bv^2;\vec{\lambda}}})_\rho)
\end{equation}
from \lemref{lem:Yoneda} and the above decomposition.

Thus $c\in H_\topdeg(Z_{\fT})$ is determined by its convolution action
$H_{\topdeg}(\M^{\C^*}_{x})\to H_{\topdeg}(\fT_{x})$ for $x =
x_{\bv^1,\bv^2;\vec{\lambda}}$ in each stratum. Then the converse of
\propref{prop:split} is clear.

\subsection{Tensor product multiplicities in terms of $IC$ sheaves}

As in the previous subsection, we also refine the stratification
in \lemref{lem:strat} as
\begin{equation*}
  \M^{\C^*}_0 = \bigsqcup 
  \Mreg_0(\bv^1,\bw^1)\times\Mreg_0(\bv^2,\bw^2)
  \times \M_0(\vec{\lambda}),
\end{equation*}
where
\begin{equation*}
  \M_0(\vec{\lambda}) = 
S_{\lambda_1} \Mreg_0(\delta_1,0)\times S_{\lambda_2}\Mreg_0(\delta_2,0)
   \times\cdots
   \times S_{\lambda_N}\Mreg_0(\delta_N,0)
\end{equation*}
as before.
For a simple local system $\rho$ on
$\Mreg_0(\bv^1,\bw^1)\times\Mreg_0(\bv^2,\bw^2)\times
\M_0(\vec{\lambda})$, we consider the corresponding $IC$ sheaf.  We
then have
 \begin{equation*}
    \begin{split}
  \pi^{\C^*}_!\cC_{\M^{\C^*}}
= 
  \bigoplus IC& (\Mreg_0(\bv^1,\bw^1)\times\Mreg_0(\bv^2,\bw^2)\times
  \M_0(\vec{\lambda}),\rho)
\\
  & \otimes
  H_{\topdeg}(\M^{\C^*}_{x_{\bv^1,\bv^2;\vec{\lambda}}})_\rho,
    \end{split}
  \end{equation*}
\begin{NB}
\begin{multline*}
  \pi^{\C^*}_!\cC_{\M^{\C^*}}
\\
= 
  \bigoplus IC(\M_0(\bv^1,\bw^1)\times\M_0(\bv^2,\bw^2))\boxtimes
  IC(\M_0(\vec{\lambda}),\rho) \otimes
  H_{\topdeg}(\M^{\C^*}_{x_{\bv^1,\bv^2;\vec{\lambda}}})_\rho,
\end{multline*}
\end{NB}%
where $x_{\bv^1,\bv^2;\vec{\lambda}}$ is a point in the stratum
$\Mreg_0(\bv^1,\bw^1)\times\Mreg_0(\bv^2,\bw^2)\times\M_0(\vec{\lambda})$.
Then
$H_{\topdeg}(\M^{\C^*}_{x_{\bv^1,\bv^2;\vec{\lambda}}})_\rho$
is a simple module of $H_{\topdeg}(Z^{\C^*})$, and any simple module
is isomorphic to a module of this form as before.

By $\Delta_c$ in \eqref{eq:Delta} we consider
$H_{\topdeg}(\M^{\C^*}_{x_{\bv^1,\bv^2;\vec{\lambda}}})_\rho$
as a module over $H_\topdeg(Z)$. Since $H_\topdeg(Z)$ is semisimple,
it decomposes into a direct sum of
$H_\topdeg(\M_{x_{\bv^0;\vec{\lambda}}})_{\rho'}$ with various
$\bv^0$, $\vec{\lambda'}$, $\rho'$. Let us define the `tensor product
multiplicity' by
\begin{equation}\label{eq:mult}
  n_{\bv^1,\bv^2;\vec{\lambda},\rho}^{\bv^0;\vec{\lambda'},\rho'}
  \defeq [H_{\topdeg}(\M^{\C^*}_{x_{\bv^1,\bv^2;\vec{\lambda}}})_\rho
  : H_\topdeg(\M_{x_{\bv^0;\vec{\lambda'}}})_{\rho'}].
\end{equation}

\begin{NB}
For a quiver of type $ADE$, we do not have $\vec{\lambda}$,
$\vec{\lambda'}$ and local systems $\rho$, $\rho'$, and we have
simpler definition:
\begin{equation*}
  n_{\bv^1,\bv^2}^{\bv^0}
  = [H_\topdeg(\pi^{-1}(x_{\bv^1,\bw^1}))\otimes
  H_\topdeg(\pi^{-1}(x_{\bv^2,\bw^2})):
   H_\topdeg(\pi^{-1}(x_{\bv^0}))],
\end{equation*}
where $x_{\bv^a,\bw^a}$ is a point in $\Mreg_0(\bv^a,\bw^a)$ ($a=1,2$).

This is not precise enough as $\pi^{-1}(x_{\bv^1,\bw^1})$ is taken in
$\M({}^\prime\bv^1,\bv^1)$ for various ${}^\prime\bv^1$.
\end{NB}

These multiplicity has a geometric description:
\begin{Theorem}\label{thm:tensor}
  The multiplicity
  $n_{\bv^1,\bv^2;\vec{\lambda},\rho}^{\bv^0;\vec{\lambda'},\rho'}$ is equal to
\begin{multline*}
  [p_!i^* IC(\M_0(\bv^0;\vec{\lambda'}),\rho'):
  IC(\Mreg_0(\bv^1,\bw^1)\times\Mreg_0(\bv^2,\bw^2)\times
  \M_0(\vec{\lambda}),\rho) ].
\end{multline*}
\end{Theorem}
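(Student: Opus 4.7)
The plan is to reduce the statement to a formal multiplicity computation for semisimple perverse sheaves, applied to the sheaf-theoretic reformulation of $\Delta_c$ from \thmref{thm:coproduct}. By that theorem, the map $\Delta_c\colon H_\topdeg(Z)\to H_\topdeg(Z^{\C^*})$ factors as
\[
\End(\pi_!\cC_\M)\xrightarrow{p_!i^*}\End(p_!i^*\pi_!\cC_\M)\xrightarrow{\operatorname{Ad}(c)}\End(\pi^{\C^*}_!\cC_{\M^{\C^*}}),
\]
so the $H_\topdeg(Z)$-module structure that $\Delta_c$ puts on a simple $H_\topdeg(Z^{\C^*})$-module arises, up to the intertwining by $c$, from the functor $p_!i^*$ itself.

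The key step is the following formal observation. Let $A=\bigoplus_i L_i\otimes V_i$ and $B=\bigoplus_j M_j\otimes W_j$ be two semisimple perverse sheaves written in their isotypic decompositions (with $L_i$, $M_j$ pairwise non-isomorphic simple perverse sheaves), and let $F$ be an exact functor with an isomorphism $F(A)\cong B$. Writing $F(L_i)\cong\bigoplus_j M_j\otimes U_{ij}$ and matching isotypic components of $F(A)$ with those of $B$ yields
\[
W_j\cong\bigoplus_i U_{ij}\otimes V_i
\]
as modules over $\End(A)=\bigoplus_i\End(V_i)$, with $\End(V_i)$ acting on the second factor of the $i$-th summand. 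In particular, the multiplicity of $V_i$ in $W_j$ equals $\dim U_{ij}=[F(L_i):M_j]$.

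Applying this with $F=p_!i^*$, $A=\pi_!\cC_\M$, and $B=\pi^{\C^*}_!\cC_{\M^{\C^*}}$ (the required isomorphism $F(A)\cong B$ being supplied by $c$), and using \eqref{eq:decompBM} together with the analogous decomposition of $\pi^{\C^*}_!\cC_{\M^{\C^*}}$ displayed just above the theorem, identifies $L_i=IC(\M_0(\bv^0;\vec{\lambda'}),\rho')$, $V_i=H_\topdeg(\M_{x_{\bv^0;\vec{\lambda'}}})_{\rho'}$, $M_j=IC(\Mreg_0(\bv^1,\bw^1)\times\Mreg_0(\bv^2,\bw^2)\times\M_0(\vec\lambda),\rho)$, and $W_j=H_\topdeg(\M^{\C^*}_{x_{\bv^1,\bv^2;\vec\lambda}})_\rho$. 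By \lemref{lem:perverse}(1), $F(L_i)\in P(\M_0^{\C^*})$, so its isotypic decomposition in terms of the $M_j$'s is well-defined and produces the integers $U_{ij}$. The resulting identity $[W_j:V_i]=[F(L_i):M_j]$ is precisely the asserted formula.

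I expect the main point of care, rather than a serious obstacle, to be verifying that the interposition of $\operatorname{Ad}(c)$ does not affect the multiplicity count. It does not: any two admissible choices of $c$ differ by an automorphism of $\pi^{\C^*}_!\cC_{\M^{\C^*}}$, which only conjugates the ring map $\End(A)\to\End(B)$ and therefore preserves the multiplicities of simple $\End(A)$-submodules; equivalently, $[F(A):M_j]=[B:M_j]$ is an intrinsic invariant of the isomorphism class of $F(A)$. Once this is noted, the only remaining work is the bookkeeping of indices $(\bv^0,\vec{\lambda'},\rho')$ and $(\bv^1,\bv^2,\vec\lambda,\rho)$ in the two isotypic decompositions.
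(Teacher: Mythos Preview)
Your proposal is correct and follows essentially the same approach as the paper. The paper's own argument is a single sentence: it says the formula is a direct consequence of the decompositions \eqref{eq:decompBM} of $\pi_!\cC_\M$ and of $\pi^{\C^*}_!\cC_{\M^{\C^*}}$, together with the identification of $\Delta_c$ with $\operatorname{Ad}(c)\circ p_!i^*$ from \eqref{eq:Ext}; you have simply unpacked this by spelling out the elementary linear-algebra observation relating $[W_j:V_i]$ to $[F(L_i):M_j]$ for semisimple objects, and by noting explicitly that the conjugation by $c$ does not disturb multiplicities. One minor quibble: calling $p_!i^*$ an ``exact functor'' is more than you need and more than is immediately obvious; all that is used is additivity together with \lemref{lem:perverse}(1), which ensures each $F(L_i)$ lies in $P(\M_0^{\C^*})$ and hence has a well-defined isotypic decomposition.
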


Recall that $p_!i^* IC(\M_0(\bv^0;\vec{\lambda}),\rho)$ is a
direct sum of
\(
  IC(\Mreg_0(\bv^1,\bw^1)\times\Mreg_0(\bv^2,\bw^2)\times
  \M_0(\vec{\lambda'}),\rho')
\)
with various $\bv^1$, $\bv^2$, $\vec{\lambda'}$, $\rho'$ by
\lemref{lem:perverse}. The right hand side of the above formula
denote the decomposition multiplicity.

This formula is a direct consequence of decompositions of
$\pi_!\cC_\M$, $\pi^{\C^*}_!\cC_{\M^{\C^*}}$ and the identification of
$\Delta_c$ with $\operatorname{Ad}(c) p_!i^*$ in \eqref{eq:Ext}.
(See also \cite[Th.~5.1]{VV2}.)

\begin{Remark}
For a quiver of type $ADE$, we do not have data $\vec{\lambda}$,
$\rho$, $\vec{\lambda'}$, $\rho'$, and multiplicities
$n_{\bv^1,\bv^2}^{\bv^0}$ is nothing but the usual tensor product
multiplicity of finite dimensional representations of the Lie algebra
$\g$ of type $ADE$ \cite[Th.~5.1]{VV2}.

In general, the author does not know how to understand the behavior of
$IC(\M_0(\bv^0;\vec{\lambda}),\rho)$ under $p_!i^*$.
For affine types, only constant sheaves
$\cC_{\overline{S_\lambda(\C^2/\Gamma)}}$ appear in $\pi_!\cC_{\M}$,
and local
  systems on $\Mreg_0(\bv^1,\bw^1)\times\Mreg_0(\bv^2,\bw^2)\times
  S_\lambda(\C^2\setminus\{0\}/\Gamma)$ can be determined.
\begin{NB}
and general multiplicities
$n_{\bv^1,\bv^2;\vec{\lambda}}^{\bv^0;\vec{\lambda'}}$ can be
calculated from the special cases
$n_{\bv^1,\bv^2;\vec{\lambda}}^{\bv^0;\emptyset}$.
\end{NB}
It should be possible to determine multiplicities from the tensor product
multiplicity for the affine Lie algebra.
But it is yet to be clarified.
\end{Remark}

\subsection{Fixed point version}

Let $a$ be a semisimple element in the Lie algebra of $\mathbb
G$. Then it defines a homomorphism
\begin{equation*}
  \rho_a\colon H^*_{\mathbb G}(\mathrm{pt}) \to \C.
\end{equation*}
Let $A$ be the smallest torus whose Lie algebra contains $a$. Let
$Z^A$ be the fixed point set
Then we have a homomorphism
\begin{equation*}
  r_a\colon H^{\mathbb G}_*(Z)\otimes_{H^*_{\mathbb G}(\mathrm{pt})}\C
  \to H_*(Z^A)
\end{equation*}
as the composite of the pull back and the multiplication of $1\otimes
\rho_a(e(N))^{-1}$, where $N$ is the normal bundle of $\M^A$ in $\M$,
and $e(N)$ is its $A$-equivariant Euler class. (See
\cite[\S5.11]{CG}.) Then $r_a$ is an algebra isomorphism. Similarly we have
\begin{equation*}
  r_a\colon H^{\mathbb G}_*(Z^{\C^*})\otimes_{H^*_{\mathbb G}(\mathrm{pt})}\C
  \to H_*((Z^{\C^*})^A).
\end{equation*}

We then have a specialized coproduct
\begin{equation*}
  \Delta_c\colon H_*(Z^A) \to H_*((Z^{\C^*})^A).
\end{equation*}

Those convolution algebras can be studied in terms of perverse sheaves
appearing whose shifts appear in direct summand in $\pi^A\C_{\M^A}$,
$(\pi^{\C^*})^A_!\C_{(\M^{\C^*})^A}$, where $\pi^A$, $(\pi^{\C^*})^A$
are restrictions of $\pi$ and $\pi^{\C^*}$ to $A$-fixed point sets
$\M^A$ and $(\M^{\C^*})^A$. See \cite[\S8.6]{CG} for detail.

The tensor product multiplicities with respect to the specialized
$\Delta_c$ are described by the functor $p^A_! (i^A)^*$, where $p^A$,
$i^A$ are restrictions of $p$ and $i$ to $A$-fixed point sets. Since
the result is almost the same as \thmref{thm:tensor}, we omit the
detail. The difference is that the algebra is not semisimple in
general, and multiplicities are considered in the Grothendieck group
of the category of modules of convolution algebras. In geometric side,
perverse sheaves are {\it not\/} preserved by the functor $p^A_!
(i^A)^*$. They are sent to direct sums of {\it shifts\/} of perverse
sheaves in general.

As we mentioned in the introduction, the target of $\Delta_c$ in
\eqref{eq:Delta} is $H_*(Z^{\C^*})$, which is larger than the tensor
product of the corresponding algebra for $\bw^1$, $\bw^2$ in
general. This is because of the existence of the third factor in
\lemref{lem:strat}(1). To avoid this, we assume that generators
$\tr(B_{h_N}B_{h_{N-1}}\cdots B_{h_1}\colon V_{\vout{h_1}} \to
V_{\vin{h_N}} = V_{\vout{h_1}})$ have nontrivial weights with respect
to $A$. Then the $A$-fixed point set in the third factor
$\M_0(\bv-\bv^0,0)$ is automatically trivial, and hence we have
\begin{equation*}
  (Z^{\C^*})^A = \bigsqcup_{\bv^1+\bv^2=\bv} Z(\bv^1,\bw^1)^A\times
  Z(\bv^2,\bw^2)^A.
\end{equation*}
This assumption is rather mild and satisfied for example if the
compositions of $A\to \mathbb G$ with the projections $\mathbb G\to
\C^*$ to the first and second factor of $\mathbb G$ both have positive
weights. This condition occurs when we study modules of $Y(\g)$ for
example, as both are identities in that case.

\begin{NB}
\section{Coproduct on Yangian}

In this section we give a natural element $c$ satisfying the
conditions \eqref{eq:cond} in a slightly smaller subspace of
$H_{\topdeg}(Z_{\fT})$ under the assumption that the graph has no edge
loops.
It is related to a coproduct on two paremeter version of the Yangian.
The argument is based on \cite[\S6]{Na-Tensor}. We assume that the
reader is familiar with it.

Let $\g$ be a symmetrizable Kac-Moody Lie algebra associated with the
Cartan matrix $(a_{ij})$.
We assume $\g$ is simply-laced, i.e., $a_{ij} = a_{ji}$ and $a_{ij} =
0$ or $-1$ for $i\neq j$. We can probably modify the argument below so
that it works only under the symmetric assumption.

\subsection{Definition of the Yangian associated with a Kac-Moody Lie
  algebra}

We fix an invariant inner product $(\ ,\ )$ on $\g$.
We normalize Chevalley generators $x_i^\pm$, $h_i$ so that
$(x_i^+,x_i^-) = 1$ and $h_i = [x^+_i,x^-_i]$. This is different from the
usual convention (as in \cite{Kac}), but is convenient for the
definition of the Yangian $Y(\g)$ as the assignment $x^\pm_i,
h_i \mapsto x^\pm_{i 0}, h_{i,0}$ gives an algebra homomorphism
$U(\g)\to Y(\g)$.
\begin{NB2}
  Let $e_i$, $f_i$, $\alpha_i^\vee$ be generators in \cite{Kac}. Let
  $\epsilon_i = 2/(\alpha_i,\alpha_i)$ as in \cite[(2.1.6)]{Kac}. Then
  we take $x_i^+ = \sqrt{(\alpha_i,\alpha_i)/2}\, e_i =
  \epsilon_i^{-1/2} e_i$, $x_i^- = \sqrt{(\alpha_i,\alpha_i)/2}\, f_i =
  \epsilon_i^{-1/2} f_i$, $h_i = (\alpha_i,\alpha_i)/2\, \alpha_i^\vee =
  \epsilon_i^{-1} \alpha_i^\vee$. Then we have $(x^+_i, x^-_i) =
  \epsilon_i^{-1} (e_i,f_i) = 1$.
\end{NB2}

We also choose an orientation $\Omega$ of the Dynkin diagram
corresponding to $(a_{ij})$.

Let $\g'$ be the derived subalgebra $[\g,\g]$.
We first define the Yangian $Y(\g')$ associated with $\g'$.
It is the associative algebra over $\C[\ve_1,\ve_2]$ with generators
$x_{i r}^\pm$, $h_{i r}$ ($i\in I$, $r\in\Z_{\ge 0}$) with the
following defining relations {\allowdisplaybreaks[4]
\begin{gather}
  [h_{i r}, h_{j s}] = 0, \label{eq:relHH}\\
  [h_{i 0}, x^\pm_{j s} ] = \pm (\alpha_i,\alpha_j) x^\pm_{j s},
  \label{eq:relHX}
\\
  [x^+_{i r}, x_{j s}^-] = \delta_{ij} h_{i, r+s}, \label{eq:relXX}
\\
  [h_{i,r+1}, x^\pm_{js}] - [h_{i,r}, x^\pm_{j,s+1}] 
  = 
  \mp \ve_{i\to j} x^\pm_{js} h_{ir}
  \mp \ve_{j\to i} h_{ir} x^\pm_{js}, \label{eq:relexHX}
\\
  [x^\pm_{i, r+1}, x^\pm_{j s}] - [x^\pm_{i r}, x^\pm_{j, s+1}] = 
  \mp \ve_{i\to j} x^\pm_{js} x_{ir}
  \mp \ve_{j\to i} x_{ir} x^\pm_{js},
   \label{eq:relexXX}
\\
  \sum_{\sigma\in S_b}
   [x^\pm_{i r_{\sigma(1)}}, [ x^\pm_{i r_{\sigma(2)}}, \cdots,
       [x^\pm_{i, r_{\sigma(b)}}, x^\pm_{j s}] \cdots ]]
   = 0
   \quad \text{if $i\neq j$,}
 \label{eq:relDS}
\end{gather}
where} $b = 1-a_{ij}$, $\ve_{i\to j} = \ve_1$ if $i\to j\in\Omega$,
$\ve_2$ if $i\to j\in\overline\Omega$, and $0$ if $a_{ij} = 0$.
\begin{NB2}
  I need to add elements in $\mathfrak h$ when the Cartan matrix is
  not invertible.
\end{NB2}

We add the Cartan subalgebra $\mathfrak h$ of $\mathfrak g$ to
$Y(\g')$ to define $Y(\g)$ with the relations
\begin{equation}\label{eq:Cartan}
\begin{gathered}
  h_{i 0} = \frac{(\alpha_i,\alpha_i)}2 \alpha_i^\vee,
\\
  [h,h_{i r}] = 0, \quad
  [h, x^\pm_{i r}] = \langle \alpha_i, h\rangle x^\pm_{i r} \quad
  \text{for $h\in\mathfrak h$}.
\end{gathered}
\end{equation}

The coproduct $\Delta$ is defined only when $\g$ is finite dimensional or
affine so far as mentioned in Introduction.
But consider the specialization $Y_0(\g)$ of $Y(\g)$ at $\ve_1 = \ve_2
= 0$. Then $Y_0(\g)$ is the universal enveloping algebra of the Lie
algebra defined by the same relations as above except the right hand
sides of \eqref{eq:relexHX} and \eqref{eq:relexXX} are replaced by
$0$. It has a coproduct $\Delta$ defined by
\begin{gather*}
  \Delta(h_{ir}) = h_{ir}\otimes 1 + 1\otimes h_{ir}, \quad
  \Delta(x^\pm_{ir}) = x^\pm_{ir}\otimes 1 + 1\otimes x^\pm_{ir}, \\
  \Delta(h) = h\otimes 1+ 1\otimes h \quad (h\in\mathfrak h).
\end{gather*}
It is straightforward to check that this is compatible with the
defining relations.

\subsection{Review of the argument in \cite[\S6]{Na-Tensor}}

We fix $W$ and $W = W^1\oplus W^2$ as in \secref{sec:}, but do not fix
$\bv$. We consider $\M(\bv,\bw)$ with various $\bv$ simultaneously:
\begin{equation*}
  \M(\bw) \defeq \bigsqcup_{\bv} \M(\bv,\bw), \quad
  \M_0(\infty,\bw) \defeq \bigcup_{\bv} \M_0(\bv,\bw),
\end{equation*}
where we have used the closed embedding
$\M_0(\bv,\bw)\subset\M_0(\bv',\bw)$ for $\bv\le\bv'$ given by the
extension by $0$ for the second definition. We also consider
\begin{multline*}
  \fT(\bw) \defeq \bigsqcup_\bv \fT(\bv,\bw),
\qquad
  Z(\bw) \defeq \M(\bw)\times_{\M_0(\infty,\bw)}\M(\bw).
\end{multline*}

Let $\mathbb G$ as in \subsecref{subsec:equivariant}.
Varagnolo \cite{Varagnolo} constructed an algebra
homomorphism
\begin{equation*}
  Y(\g)\to H_*^{\mathbb G}(Z(\bw)),
\end{equation*}
based on an earlier work on the quantum loop algebra by the author
\cite{Na-qaff}. The parameters $\ve_1$, $\ve_2$ are identified with
variables for the $\C^*\times\C^*$-equivariant cohomology group of a
point.
(See \cite[\S5]{Na-Tensor} for the precise definition of $H_*^{\mathbb
  G}(Z(\bw))$.)
Here the simply-laced assumption is used. If we merely assume $\g$ is
symmetric, the defining relations of $Y(\g)$ must be
modified.

We specialize the homomorphism at $\ve_1 = \ve_2 = 0$:
\begin{equation*}
  Y_0(\g) \to H^G_*(Z(\bw)).
\end{equation*}
In particular, we have $Y_0(\g)$-module structures on 
$H^G_*(\La(\bw))$ and $H^G_*(\widetilde\fT(\bw))$. They have
distinguished vectors given by the fundamental class of
$\La(0,\bw) = \fT(0,\bw) = \mathrm{pt}$. We denote it by $[0]_\bw$.

On the other hand, we have a $Y_0(\g)\otimes Y_0(\g)$-module structure
on $H^G_*(\La(\bw^1))\otimes_{H_G^*(\mathrm{pt})} H^G_*(\La(\bw^2))$
with a distinguished vector $[0]_{\bw^1}\otimes[0]_{\bw^2}$. Therefore
we have an induced $Y_0(\g)$-module structure via the coproduct
$\Delta$ in the previous subsection.

\begin{Lemma}
  Let $\mathscr S(G)$ be the quotient field of
  $H^*_G(\mathrm{pt})$. We have a unique $Y_0(\g)\otimes\mathscr
  S(G)$-module isomorphism
  \begin{equation*}
   \Phi_{\mathscr S}\colon
    H^G_*(\La(\bw^1))\otimes_{H_G^*(\mathrm{pt})} H^G_*(\La(\bw^2))
    \otimes\mathscr S(G)
    \to H^G_*(\widetilde\fT(\bw))\otimes\mathscr S(G)
  \end{equation*}
sending the class $[0]_{\bw^1}\otimes [0]_{\bw^2}\to [0]_\bw$.
\end{Lemma}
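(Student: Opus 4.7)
The plan is to construct $\Phi_{\mathscr S}$ by equivariant localization along $\lambda\colon\C^*\to G_W$, verify $Y_0(\g)$-equivariance using the geometric coproduct of Section~3, and deduce uniqueness from cyclicity of the source under the distinguished vector. First I identify the $\C^*$-fixed locus in $\widetilde\fT(\bw)$: a fixed point lies in $\M^{\C^*}$, and its $\pi$-image, being $\lambda$-fixed inside $\widetilde\fT_0$, must equal its own $\lambda$-limit $0$. Combined with \eqref{eq:decomp1} this yields
\[
  \widetilde\fT(\bw)^{\C^*} \;=\; \bigsqcup_{\bv^1+\bv^2=\bv}\La(\bv^1,\bw^1)\times\La(\bv^2,\bw^2)\;=\;\La(\bw^1)\times\La(\bw^2).
\]
The equivariant localization theorem together with the equivariant K\"unneth formula then gives a natural $\mathscr S(G)$-linear isomorphism between the two sides of the statement; I take $\Phi_{\mathscr S}$ to be this isomorphism. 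The vector $[0]_{\bw^1}\otimes[0]_{\bw^2}$ is the fundamental class of the component $\bv^1=\bv^2=0$, a single point whose embedding into $\widetilde\fT(0,\bw)=\M(0,\bw)=\mathrm{pt}$ has trivial normal bundle, so no Euler-class correction appears and $\Phi_{\mathscr S}([0]_{\bw^1}\otimes[0]_{\bw^2})=[0]_\bw$.

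The main obstacle is verifying $Y_0(\g)$-equivariance. The Yangian acts on the target through the convolution algebra $H^{\mathbb G}_*(Z(\bw))$, and on the source through the algebraic coproduct $\Delta$, which at $\ve_1=\ve_2=0$ is primitive on the Drinfeld generators since the deformation terms in \eqref{eq:relexHX} and \eqref{eq:relexXX} vanish. To compare, I choose a class $c\in H_{\topdeg}(Z_\fT)$ satisfying \eqref{eq:cond} (the diagonal splitting suffices) and invoke the geometric coproduct $\Delta_c$ of \eqref{eq:Delta}, which is an algebra homomorphism by \thmref{thm:coproduct}. The localization identification of \subsecref{subsec:equivariant} transports the target action to $H^{\mathbb G}_*(\widetilde\fT(\bw)^{\C^*})\otimes\mathscr S(G)$ where it is implemented by $\Delta_c(\alpha)$; equivariance of $\Phi_{\mathscr S}$ thus reduces to checking that the $\ve_1=\ve_2=0$ specialization of $\Delta_c$ agrees with the primitive $\Delta$ on the explicit convolution cycles representing Varagnolo's generators $x^\pm_{i,r}, h_{i,r}$ from \cite{Varagnolo, Na-qaff}. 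This is the delicate point, and the verification proceeds case by case on the generators, using the sheaf-theoretic description of $\Delta_c$ as $\operatorname{Ad}(c)\circ p_! i^*$.

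For uniqueness, cyclicity of $H^G_*(\La(\bw))$ over $Y_0(\g)$ under the vacuum \cite{Varagnolo}, combined with the filtration \eqref{eq:short1} and the localization identification, shows that both sides are cyclic over $Y_0(\g)\otimes\mathscr S(G)$ generated by their distinguished vectors. Hence any $Y_0(\g)\otimes\mathscr S(G)$-linear map from the source is determined by the image of $[0]_{\bw^1}\otimes[0]_{\bw^2}$, proving uniqueness of $\Phi_{\mathscr S}$.
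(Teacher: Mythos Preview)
Your approach has a genuine circularity problem. The Lemma sits inside the (excluded) section whose announced purpose is to \emph{construct} a class $c$ satisfying \eqref{eq:cond} for which the geometric coproduct $\Delta_c$ matches the algebraic coproduct $\Delta$ on $Y_0(\g)$. You propose to prove the Lemma by choosing some $c$ and then checking that ``the $\ve_1=\ve_2=0$ specialization of $\Delta_c$ agrees with the primitive $\Delta$ on the explicit convolution cycles representing Varagnolo's generators.'' But that agreement is precisely the conclusion the section is working toward via this Lemma and the subsequent \thmref{thm:tensor}; you cannot assume it here. You yourself flag this as ``the delicate point'' and then do not carry it out.

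There is also a mismatch in the reduction. The localization isomorphism $r_a$ intertwines the $H^G_*(Z(\bw))$-action with the $H^G_*(Z^{\C^*})$-action via the algebra isomorphism $r_a$, not via $\Delta_c$; the two are different maps $H^G_*(Z)\to H^G_*(Z^{\C^*})\otimes\mathscr S(G)$ in general (they differ by conjugation by Euler classes versus by $c$). So even granting a $c$, the equivariance of your $\Phi_{\mathscr S}$ does not reduce to $\Delta_c=\Delta$ in the way you claim.

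The paper's route, following \cite[6.4]{Na-Tensor}, avoids all of this by arguing directly at the level of modules rather than coproducts. One observes that $[0]_\bw\in H^G_*(\widetilde\fT(\bw))$ is a highest weight vector for the $Y_0(\g)$-action coming from $H^G_*(Z(\bw))$ (this action is defined without any choice of $c$, since $\widetilde\fT$ is $\pi$-saturated), with the same highest weight as $[0]_{\bw^1}\otimes[0]_{\bw^2}$. Over the fraction field $\mathscr S(G)$ the parameters become generic, and both sides are identified with the standard module of that highest weight; the characterization of standard modules then yields a unique isomorphism sending highest weight vector to highest weight vector. No comparison of $\Delta_c$ with $\Delta$ is needed at this stage; rather, the isomorphism $\Phi$ is what will later be used to produce the desired $c$.
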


This is proved as in \cite[6.4]{Na-Tensor}.

Let $\widetilde\fT_1(\bv,\bw) = \widetilde\fT(0,\bw^1;\bv,\bw^2)$. It
is a {\it closed\/} stratum of $\widetilde\fT(\bv,\bw)$. It is the
total space of a vector bundle over $\La(0,\bw^1)\times\La(\bv,\bw^2) \cong
\La(\bv,\bw^2)$. Hence we have the Thom isomorphism
\begin{equation*}
  H^G_*(\widetilde\fT_1(\bv,\bw))\cong H^G_*(\La(\bv,\bw^2))
\end{equation*}
and a homomorphism
\begin{equation*}
  H^G_*(\widetilde\fT_1(\bv,\bw))\to H^G_*(\widetilde\fT(\bv,\bw))
\end{equation*}
given by the inclusion.

Let $\fT_1(\bw) = \bigsqcup \fT_1(\bv,\bw)$.

\begin{Lemma}
  The following diagram is commutative:
  \begin{equation*}
    \begin{CD}
      H^G_*(\La(\bw^2)) @>\text{Thom isom.}>> H^G_*(\widetilde\fT_1(\bw))
\\
    @V{[0]_{\bw^1}\otimes\bullet}VV @VVV
\\
      H^G_*(\La(\bw^1))\otimes_{H_G^*(\mathrm{pt})} H^G_*(\La(\bw^2))
      @. 
      H^G_*(\widetilde\fT(\bw))
\\
    @V{\otimes{\mathscr S(G)}}VV @VV{\otimes{\mathscr S(G)}}V
\\
   H^G_*(\La(\bw^1))\otimes_{H_G^*(\mathrm{pt})} H^G_*(\La(\bw^2))
    \otimes\mathscr S(G)
    @>>\Phi_{\mathscr S}>
   H^G_*(\widetilde\fT(\bw))\otimes\mathscr S(G).
    \end{CD}
  \end{equation*}
\end{Lemma}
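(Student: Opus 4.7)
The plan is to prove the lemma by reducing it to a statement about agreement on the distinguished vector $[0]_{\bw^2}$ together with a Yangian-equivariance argument, mirroring the uniqueness characterization of $\Phi_{\mathscr S}$ in the preceding lemma.

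\medskip

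\textbf{Step 1 (Agreement on the distinguished vector).} First I would verify that both routes send $[0]_{\bw^2} \in H^G_*(\La(\bw^2))$ to $[0]_\bw \in H^G_*(\widetilde\fT(\bw))$. Along the top--right composition: the class $[0]_{\bw^2}$ is the fundamental class of the point $\La(0,\bw^2)$; its Thom image is the fundamental class of the fiber of $\widetilde\fT_1(0,\bw) \to \La(0,\bw^2)$ over $[0]_{\bw^2}$. But $\widetilde\fT_1(0,\bw) = \widetilde\fT(0,\bw^1;0,\bw^2)$ is a single point, so this class is $[0]_\bw$, and its inclusion into $H^G_*(\widetilde\fT(\bw))$ is still $[0]_\bw$. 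Along the bottom--left composition: $[0]_{\bw^2}$ maps to $[0]_{\bw^1} \otimes [0]_{\bw^2}$, which $\Phi_{\mathscr S}$ sends to $[0]_\bw$ by the preceding lemma.

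\medskip

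\textbf{Step 2 (Equivariance of the bottom composition).} I would then equip $H^G_*(\La(\bw^2)) \otimes \mathscr S(G)$ with the $Y_0(\g) \otimes \mathscr S(G)$-module structure obtained by transporting the coproduct action on $H^G_*(\La(\bw^1)) \otimes_{H^*_G(\mathrm{pt})} H^G_*(\La(\bw^2)) \otimes \mathscr S(G)$ through the injection $v \otimes s \mapsto [0]_{\bw^1} \otimes v \otimes s$. With this structure, the bottom--left composition is tautologically $Y_0(\g) \otimes \mathscr S(G)$-equivariant, since $\Phi_{\mathscr S}$ itself is Yangian-linear by the preceding lemma.

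\medskip

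\textbf{Step 3 (Equivariance of the top composition --- the main obstacle).} The hard part is showing that the top--right composition intertwines the same actions. The Yangian acts via convolution with cycles on $Z(\bw)$; the inclusion $H^G_*(\widetilde\fT_1(\bw)) \to H^G_*(\widetilde\fT(\bw))$ corresponds to restricting these correspondences to the closed stratum $\widetilde\fT_1(\bw)$, which is the vector bundle over $\La(\bv,\bw^2)$ identified in the statement. Under the Thom isomorphism, the action so restricted must match the $\Delta$-twisted action in which the first tensor factor $[0]_{\bw^1}$ is absorbed. This compatibility can be checked on Drinfeld generators $x^\pm_{i r}$, $h_{i r}$, using the explicit form of the geometric correspondences of \cite{Varagnolo} and the analysis of the tensor product stratification in \cite[\S6]{Na-Tensor}, together with the base-change/localization identity for the closed stratum whose normal weights with respect to $\lambda$ are positive. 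This is where the bulk of the work lies.

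\medskip

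\textbf{Step 4 (Conclusion).} Once equivariance is established for both routes, I would invoke cyclicity: $H^G_*(\La(\bw^2)) \otimes \mathscr S(G)$ is generated as a $Y_0(\g) \otimes \mathscr S(G)$-module by $[0]_{\bw^2}$ (this is a standard property analogous to what underlies uniqueness in the preceding lemma, with $[0]_{\bw^2}$ playing the role of the vacuum on which lowering operators $x^-_{i r}$ act). Since the two compositions agree on this cyclic generator (Step 1) and are both $Y_0(\g) \otimes \mathscr S(G)$-linear (Steps 2--3), they coincide on the whole space after tensoring with $\mathscr S(G)$, which is the content of the diagram.
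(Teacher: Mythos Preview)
There is a genuine gap in your Steps 2--4. You propose to transport a $Y_0(\g)$-module structure to $H^G_*(\La(\bw^2))\otimes\mathscr S(G)$ through the injection $v\mapsto [0]_{\bw^1}\otimes v$, but the image of this injection is \emph{not} a $Y_0(\g)$-submodule for the coproduct action: $[0]_{\bw^1}$ is the highest weight vector, so it is killed by the $x^+_{ir}$ but not by the $x^-_{ir}$, and $\Delta(x^-_{ir})\cdot([0]_{\bw^1}\otimes v) = (x^-_{ir}[0]_{\bw^1})\otimes v + [0]_{\bw^1}\otimes (x^-_{ir}v)$ leaves the image. The same obstruction prevents the top composition from being $Y_0(\g)$-equivariant for lowering operators: applying $x^-_{ir}$ to a class supported on the closed stratum $\widetilde\fT_1$ produces a class whose support leaks into other strata. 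Consequently the cyclicity argument in Step~4, which requires precisely the lowering operators to generate the module from the highest weight vector $[0]_{\bw^2}$, cannot be run.

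The paper's proof works instead with the subalgebra $Y_0(\g)^+$ generated by the raising operators $x^+_{ir}$, for which $v\mapsto [0]_{\bw^1}\otimes v$ \emph{is} equivariant. Since $Y_0(\g)^+\cdot[0]_{\bw^2}$ is only the highest weight line, one cannot start from $[0]_{\bw^2}$; one starts from the extremal weight vectors $[\La(\bv^2,\bw^2)]$, one for each Weyl group element $w$. The argument of \cite[6.9]{Na-Tensor} then gives commutativity on each Demazure-type subspace $Y_0(\g)^+[\La(\bv^2,\bw^2)]$, and one checks that the union of these over all $w$ exhausts $H^G_*(\La(\bw^2))$. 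In finite type the single lowest weight vector suffices; the modification needed here, and the whole point of the paper's remark, is that in the Kac--Moody setting one must use this family of extremal vectors instead.
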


The proof is the same as that of \cite[6.9]{Na-Tensor} except one
modification. The fundamental class of $\La(\bw^2-\mathbf m^2,\bw^2)$
giving the lowest weight vector was used in the proof. This class
makes sense only when $\g$ is finite dimensional. We use instead
fundamental classes of $\La(\bv^2,\bw^2)$ giving extremal weight
vectors of weights $w\cdot \bw$ for any Weyl group element $w$.
Then the argument of \cite[6.9]{Na-Tensor} shows that the diagram is
commutative on the subspace $Y_0(\g)^+ [\La(\bv^2,\bw^2)]$ where
$Y_0(\g)^+$ is the subalgebra generated by $x^+_{ir}$ ($i\in I$, $r\ge
0$).
This subspace is an analog of Demazure modules for a Kac-Moody Lie
algebra.
It is easy to show that the union of subspaces for all $w$
cover the whole $H_*(\La(\bw^2))$.
\begin{NB2}
  By the commutation relations, we have $x^-_{i,r}Y_0(\g)^+
  [\La(\bv^2,\bw^2)] \subset Y_-(\g)^+ x^-_{i,r}[\La(\bv^2,\bw^2)] +
  Y_-(\g)^+ [\La(\bv^2,\bw^2)]$. Then if
  $x^-_{i,r}[\La(\bv^2,\bw^2)]\neq 0$, we take the `next' extremal
  vector $(x^-_{i 0})^m [\La(\bv^2,\bw^2)]$ for some $m$. Then we see
  \begin{equation*}
    \begin{split}
   & x^-_{i,r}[\La(\bv^2,\bw^2)]
    = x^-_{i,r} (x^+_{i 0})^{m} (x^-_{i 0})^{m} [\La(\bv^2,\bw^2)]
\\
   & = (x^+_{i 0} x^-_{i r} - h_{i r})(x^+_{i 0})^{m-1}
    (x^-_{i 0})^{m} [\La(\bv^2,\bw^2)].
    \end{split}
  \end{equation*}
  By induction we see that this is in $Y_0(\g)^+(x^-_{i 0})^{m}
  [\La(\bv^2,\bw^2)])$.
\end{NB2}%
Therefore we get the assertion.

Now as in \cite[6.12]{Na-Tensor} the middle row in the above
commutative diagram can be completed. We get
\begin{NB2}
$H^G_*(\La(\bw^2))$ and $H^G_*(\widetilde\fT_1(\bw))$ span
$H^G_*(\La(\bw^1))\otimes_{H_G^*(\mathrm{pt})} H^G_*(\La(\bw^2))$ and
$H^G_*(\widetilde\fT(\bw))$ as $Y_0(\g)$-modules respectively. 
\end{NB2}%

\begin{Theorem}\label{thm:tensor}
  $\Phi_{\mathscr S}$ induces an isomorphism
  \begin{equation*}
    \Phi\colon 
    H^G_*(\La(\bw^1))\otimes_{H_G^*(\mathrm{pt})} H^G_*(\La(\bw^2))
      \to
    H^G_*(\widetilde\fT(\bw))
  \end{equation*}
of $Y_0(\g)\otimes H_G^*(\mathrm{pt})$-modules.
\end{Theorem}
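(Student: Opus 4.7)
The plan is to follow the argument of \cite[6.12]{Na-Tensor} and clear denominators from $\Phi_{\mathscr S}$. The isomorphism over $\mathscr S(G)$ is already known, so the issue is entirely about showing that $\Phi_{\mathscr S}$ restricts to an integral map on unlocalized homology, and that this restriction is still surjective. Injectivity is then automatic from the localization.

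First, I would check that the image of $\Phi_{\mathscr S}$, restricted to the unlocalized tensor product, lies in $H^G_*(\widetilde\fT(\bw))$. For the distinguished classes, this is exactly the content of the preceding commutative diagram: $\Phi_{\mathscr S}([0]_{\bw^1}\otimes [\La(\bv^2,\bw^2)])$ equals the push-forward along $\widetilde\fT_1(\bw)\hookrightarrow \widetilde\fT(\bw)$ of the Thom class of $\La(\bv^2,\bw^2)$, and therefore has no denominator. Since $\Phi_{\mathscr S}$ is $Y_0(\g)$-equivariant and the $Y_0(\g)$-action is defined on unlocalized equivariant homology, the entire $Y_0(\g)$-submodule generated by these classes is mapped integrally into $H^G_*(\widetilde\fT(\bw))$.

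Next I would show that two generation statements hold simultaneously. On the source, the $Y_0(\g)$-submodule of $H^G_*(\La(\bw^1))\otimes_{H_G^*(\mathrm{pt})}H^G_*(\La(\bw^2))$ generated by $[0]_{\bw^1}\otimes[\La(\bv^2,\bw^2)]$ as $\bv^2$ varies is the whole space: using the explicit coproduct formula $\Delta(x^\pm_{ir})=x^\pm_{ir}\otimes 1+1\otimes x^\pm_{ir}$ and the fact that $[0]_{\bw^1}$ is an extremal vector (so that applying $Y_0(\g)$ to $[0]_{\bw^1}\otimes v$ stays inside $[0]_{\bw^1}\otimes Y_0(\g)v$ plus terms already in the submodule), combined with the Weyl-group/Demazure cover of $H^G_*(\La(\bw^2))$ established in the previous lemma, one fills out the tensor product. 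On the target, I would argue by induction on the stratification \eqref{eq:decomp} of $\fT$: the closed stratum $\widetilde\fT_1(\bw)$ generates the next open piece under $Y_0(\g)^-$, and the open-closed exact sequences match step by step with the induction on the source side.

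The final step packages these into the isomorphism $\Phi$: integrality (Step 1) gives the map, the two generation results (Step 2) give surjectivity, and injectivity is inherited from $\Phi_{\mathscr S}$. The main obstacle is Step 2, and specifically the generation on the source. For finite-dimensional $\g$ one can appeal to the lowest-weight vector as in \cite[6.12]{Na-Tensor}, but in the Kac-Moody setting no such single extremal vector exists; instead one must cover $H^G_*(\La(\bw^2))$ by the Demazure-type pieces $Y_0(\g)^+\cdot[\La(\bv^2,\bw^2)]$ ranging over extremal weights $w\cdot\bw$, and verify that in each fixed $\bv$-graded component only finitely many such pieces are needed, so that the generation conclusion holds without any completion issues. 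Once this is secured, the matching stratum-by-stratum surjectivity on the target is routine.
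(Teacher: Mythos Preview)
Your proposal is correct and follows essentially the same route as the paper: both argue ``as in \cite[6.12]{Na-Tensor}'' by using the commutative diagram to land $\Phi_{\mathscr S}$ integrally on the image of $H^G_*(\La(\bw^2))\cong H^G_*(\widetilde\fT_1(\bw))$, and then observe that this image generates both the source and the target as $Y_0(\g)$-modules, with the Kac--Moody modification that the Demazure-type pieces $Y_0(\g)^+[\La(\bv^2,\bw^2)]$ over extremal weights $w\cdot\bw$ replace the single lowest-weight vector used in the finite-type case. The paper states the generation claim tersely (essentially as a one-line remark), while you spell out the mechanism via the coproduct and the stratification more explicitly, but the underlying argument is the same.
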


Forgetting the $G$-action, we still have an isomorphism between
non-equivariant homology groups. It is degree preserving, and hence we
get an isomorphism
\begin{equation}\label{eq:Phi}
    H_{\topdeg}(\La(\bw^1))\otimes H_{\topdeg}(\La(\bw^2))
  \xrightarrow[\Phi]{\cong} H_{\topdeg}(\widetilde\fT(\bw)).
\end{equation}
This is an isomorphism of $\g$-modules.

Recall the decomposition \eqref{eq:decomp}. We define
\begin{equation*}
  \widetilde\fT_{\le\bv^1} \defeq
  \bigsqcup_{\bv'\le\bv^1,\bv^2} \fT(\bv',\bw^1;\bv^2,\bw^2).
\end{equation*}

\begin{Lemma}
  Fix $\bv^1$. Then $\Phi$ induces an isomorphism
  \begin{equation*}
    \bigoplus_{\bv'\le \bv^1} H_\topdeg(\La(\bv',\bw^1))\otimes
    H_\topdeg(\La(\bw^2)) \xrightarrow{\cong}
    H_{\topdeg}(\widetilde\fT_{\le \bv^1})
  \end{equation*}
\end{Lemma}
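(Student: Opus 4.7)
The plan is to proceed by induction on $\bv^1$ in the partial order $\le$. Restricting the decomposition \eqref{eq:decomp} to $\widetilde\fT = \pi^{-1}(\widetilde\fT_0)$ gives $\widetilde\fT = \bigsqcup_{\bv^1,\bv^2}\widetilde\fT(\bv^1,\bw^1;\bv^2,\bw^2)$, with each stratum a vector bundle over $\La(\bv^1,\bw^1)\times\La(\bv^2,\bw^2)$ through the restriction of $p_{(\bv^1,\bv^2)}$. The argument producing \eqref{eq:short1} then yields, for every $\bv^1$, a short exact sequence
\begin{equation*}
0\to H_\topdeg(\widetilde\fT_{<\bv^1}) \to H_\topdeg(\widetilde\fT_{\le\bv^1}) \to H_\topdeg(\La(\bv^1,\bw^1))\otimes H_\topdeg(\La(\bw^2)) \to 0,
\end{equation*}
where the rightmost term is computed by the Thom isomorphism summed over $\bv^2$. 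The LHS of the lemma carries an analogous split short exact sequence.

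For the base case $\bv^1 = 0$ we have $\widetilde\fT_{\le 0} = \widetilde\fT_1(\bw)$, and the claim is immediate from the Thom isomorphism identification furnished by the commutative diagram in the preceding lemma composed with $\Phi$. For the inductive step, comparing the two short exact sequences reduces matters to two assertions: (a) $\Phi$ maps $\bigoplus_{\bv'\le\bv^1}H_\topdeg(\La(\bv',\bw^1))\otimes H_\topdeg(\La(\bw^2))$ into $H_\topdeg(\widetilde\fT_{\le\bv^1})$; and (b) the induced map on the graded quotient $H_\topdeg(\La(\bv^1,\bw^1))\otimes H_\topdeg(\La(\bw^2))$ is bijective. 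Granting (a), assertion (b) follows from the inductive hypothesis applied to $\widetilde\fT_{<\bv^1}$, the fact (preceding theorem) that $\Phi$ is an isomorphism of the full spaces, and a comparison of dimensions of associated graded pieces via the five lemma.

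The principal obstacle is (a), the filtration compatibility of $\Phi$. This is verified by tracing through the construction of $\Phi$: it is the unique $Y_0(\g)\otimes H^*_G(\mathrm{pt})$-module isomorphism sending $[0]_{\bw^1}\otimes [0]_{\bw^2}\mapsto [0]_\bw$, so it is determined by how $Y_0(\g)$ acts via $\Delta$ on the vacuum. In the Demazure-type subspaces used in the proof of the preceding theorem, every vector in $H_\topdeg(\La(\bv',\bw^1))\otimes H_\topdeg(\La(\bw^2))$ is reached from extremal weight fundamental classes by applying operators whose $x^+_{i,r}\otimes 1$ components account for a total increase in $\bv^1$ of at most $\bv'$. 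Geometrically, $x^+_{i,r}\otimes 1$ corresponds to a Hecke correspondence on $\widetilde\fT$ that raises the stratum label by $\be_i$, whereas $1\otimes x^+_{i,r}$ preserves the label. Inspecting the defining cycles, as in \cite[\S6]{Na-Tensor}, shows that these correspondences restrict compatibly to the closed subvarieties $\widetilde\fT_{\le\bv^1}$, which yields (a) and closes the induction.
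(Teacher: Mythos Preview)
Your inductive framework via the short exact sequences is sound, and the reduction to assertions (a) and (b) is correct. The genuine gap is in your argument for (a), the filtration compatibility of $\Phi$.

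You invoke the Demazure-type subspaces and $x^+_{i,r}$ operators from the preceding lemma, but those were used there to span the $\bv^1=0$ piece (the image of $[0]_{\bw^1}\otimes\bullet$), not to climb in $\bv^1$. In the standard conventions, $x^+_{i,r}$ raises weight and therefore \emph{decreases} $\bv$; so $x^+_{i,r}\otimes 1$ lowers, rather than raises, the stratum label. Starting from extremal vectors at large $\bv$ and applying $x^+$'s gives no control forcing the image to lie in $\widetilde\fT_{\le\bv'}$, since you would first have to know the extremal classes themselves sit in the correct filtration piece---which is exactly the assertion in question.

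The paper's argument runs in the opposite direction and avoids this circularity. One observes that the left hand side $\bigoplus_{\bv'\le\bv^1} H_\topdeg(\La(\bv',\bw^1))\otimes H_\topdeg(\La(\bw^2))$ is spanned by $f_{i_1}\cdots f_{i_k}$ applied (via $\Delta$) to $H_\topdeg(\La(0,\bw^1))\otimes H_\topdeg(\La(\bw^2))$ with $\alpha_{i_1}+\cdots+\alpha_{i_k}=\bv^1$, since $H_\topdeg(\La(\bw^1))$ is generated from its highest weight vector by the $f_i$'s. The base case identifies this starting space with $H_\topdeg(\widetilde\fT_1)$. Then one checks directly from the definition of the Hecke correspondences that $f_{i_1}\cdots f_{i_k} H_\topdeg(\widetilde\fT_1)$ is supported on the locus of $[B,a,b]$ admitting a $B$-invariant flag $V\supset{}^kV\supset\cdots\supset{}^1V$ with successive quotients of dimension $\alpha_{i_k},\dots,\alpha_{i_1}$, with $\Ima a\subset{}^1V$ and the restriction to ${}^1V$ lying in $\widetilde\fT_1$. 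This locus is contained in $\widetilde\fT_{\le\bv^1}$, which gives (a). Once (a) is in hand, your argument for the isomorphism via (b) and the five lemma is fine.
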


\begin{proof}
If $\bv^1 = 0$, this part of $\Phi$ is given by the Thom isomorphism
\(
  H_\topdeg(\La(\bw^2)) \cong H_\topdeg(\widetilde\fT_1).
\)
This is nothing but the above assertion.

For general $\bv^1$, the left hand side is spanned by
  \begin{equation*}
    f_{i_1}\cdots f_{i_k}\left(
    H_\topdeg(\La(0,\bw^1))\otimes_{H^*_G(\mathrm{pt})}
    H_\topdeg(\La(\bw^2))\right)
  \end{equation*}
  with various $i_1$, \dots, $i_k$ such that $\alpha_{i_1} + \dots +
  \alpha_{i_k} = \bv^1$. From the definition of the operators
  $f_{i_1}$, \dots, $f_{i_k}$, $f_{i_1}\cdots
  f_{i_k}H_\topdeg(\widetilde\fT_1)$ is supported by the subvariety
  consisting of $[B, a, b]\in\M(\bw)$ such that there exists a
  filtration $V\supset {}^kV\supset\cdots \supset {}^1V$ of $I$-graded
  subspaces with $\dim V/{}^kV = \alpha_{i_k}$, \dots, $\dim
  {}^2V/{}^1V = \alpha_{i_1}$, which is $B$-invariant, satisfies $\Ima
  a\subset V^1$ and the restriction of $[B,a,b]$ to $V^1$ is contained
  in $\widetilde\fT_1$.
\end{proof}

\subsection{$c$ from the coproduct}

On the other hand, we consider the isomorphism $\Phi$ in
\thmref{thm:tensor} forgetting the $G$-action. From our definition of
$\Phi$, it preserves the degree. So we have
This gives an element in the summand for $x_{\bv^1,\bv^2;\lambda} = 0$
(with trivial $\rho$) in \eqref{eq:decompTensor}.

Consider more generally a point $x_{\bv^1,\bv^2} =
(x_{\bv^1},x_{\bv^2})\in\Mreg_0(\bv^1,\bw^1)\times\Mreg_0(\bv^2,\bw^2)$. Modifying
the argument in the previous subsection, or using the slice argument
in \cite[\S3.3]{Na-qaff} to show that fibers over $x_{\bv^1,\bv^2}$
for $\M^{\C^*}$ and $\fT$ are isomorphic to central fibers
$\La(\bw^{\prime1})\times\La(\bw^{\prime2})$, $\widetilde\fT(\bw')$
for suitable $\bw^{\prime1}$, $\bw^{\prime2}$, $\bw'$, we have an
analogous isomorphism
\begin{equation*}
  H_{\topdeg}(\M(\bw^1)_{x_{\bv^1}})\otimes H_{\topdeg}(\M(\bw^2)_{x_{\bv^2}})
  \xrightarrow{\cong} H_{\topdeg}(\fT(\bw)_{x_{\bv^1,\bv^2}}).
\end{equation*}
Thus we have elements in the summand in \eqref{eq:decompTensor} for
$x_{\bv^1,\bv^2;\lambda}$ with $\lambda=\emptyset$ (and trivial $\rho$).

From the definition of $\Phi$,

Motivated by the above construction, we introduce a subvariety
$Z^{\mathrm{reg}}_\fT\subset Z_\fT$ defined as
\begin{equation*}
   \bigcup_{\bv^1,\bv^2} 
   \left\{ (x^1,x^2)\in Z_{\fT} \middle| \pi(x^1) = \pi(x^2)\in
   \Mreg_0(\bv^1,\bw^1)\times\Mreg_0(\bv^2,\bw^2)  \right\}.
\end{equation*}
Then the above construction gives an element

\begin{NB2}
\begin{equation*}
   \bigcup_{\vec{\lambda}\neq 0} 
   \left\{ (x^1,x^2)\in Z_{\fT} \middle| \pi(x^1) = \pi(x^2)\in
   \M_0(\bv^0;\vec{\lambda})  \right\}.
\end{equation*}
\end{NB2}
\end{NB}

\subsection*{Acknowledgments}

The author thanks D.~Maulik and A.~Okounkov for discussion on their
works.

\bibliographystyle{myamsplain}
\bibliography{nakajima,mybib,tensor2}

\end{document}